\DeclareMathOperator{\KI}{KI}
\DeclareMathOperator{\hKI}{K\hat I}
\DeclareMathOperator{\KA}{KA}
\DeclareMathOperator{\KB}{KB}
\DeclareMathOperator{\supp}{supp}
\tikzset{
  treenode/.style = {shape=rectangle, rounded corners,
                     draw, align=center,
                     top color=white, bottom color=blue!20},
  root/.style     = {treenode, font=\Large, bottom color=red!30},
  env/.style      = {treenode, font=\ttfamily\normalsize},
  dummy/.style    = {circle,draw}
}
\renewcommand*\env@matrix[1][*\c@MaxMatrixCols c]{%
  \hskip -\arraycolsep
  \let\@ifnextchar\new@ifnextchar
  \array{#1}}
\newtheorem{thm}{Theorem}[section]
\newtheorem{lem}[thm]{Lemma}
\newtheorem{defn}[thm]{Definition}
\newtheorem{prop}[thm]{Proposition}
\newtheorem{nrem}[thm]{Remark}
\definecolor{apricot}{rgb}{0.98, 0.81, 0.69}
\definecolor{aquamarine}{rgb}{0.5, 1.0, 0.83}
\definecolor{babyblueeyes}{rgb}{0.63, 0.79, 0.95}
\definecolor{bananamania}{rgb}{0.98, 0.91, 0.71}
\definecolor{bittersweet}{rgb}{1.0, 0.44, 0.37}
\definecolor{bluebell}{rgb}{0.64, 0.64, 0.82}
\definecolor{blush}{rgb}{0.87, 0.36, 0.51}
\definecolor{brilliantlavender}{rgb}{0.96, 0.73, 1.0}
\definecolor{darkcyan}{rgb}{0.0, 0.55, 0.55}
\definecolor{cerulean}{rgb}{0.0, 0.48, 0.65}
\begin{document}
\title{Invariant densities for random systems of the interval\\ 
}
\author[Charlene Kalle]{Charlene Kalle}
\address[Charlene Kalle]{Mathematisch Instituut, Leiden University, Niels Bohrweg 1, 2333CA Leiden, The Netherlands}
\email[Charlene Kalle]{kallecccj@math.leidenuniv.nl}
\author[Marta Maggioni]{Marta Maggioni}
\address[Marta Maggioni]{Mathematisch Instituut, Leiden University, Niels Bohrweg 1, 2333CA Leiden, The Netherlands}
\email[Marta Maggioni]{m.maggioni@math.leidenuniv.nl}

\subjclass[2010]{37E05, 28D05, 37A45, 37A05, 60G10}
\keywords{interval map, random dynamics, absolutely continuous invariant measure, $\beta$-expansions, L\"uroth expansions}

\maketitle

\begin{abstract}
For random piecewise linear systems $T$ of the interval that are expanding on average we construct explicitly the density functions of absolutely continuous $T$-invariant measures. In case the random system uses only expanding maps our procedure produces all invariant densities of the system. Examples include random tent maps, random $W$-shaped maps, random $\beta$-transformations and random L\"uroth maps with a hole.
\end{abstract}

\section{Introduction}
The Perron-Frobenius operator has been used since the seminal paper \cite{LaYo} of Lasota and Yorke to establish the existence of absolutely continuous invariant measures for deterministic dynamical systems. The same approach was also successful in the study of random dynamical systems. In the random setting, instead of a single map, a family of maps is considered from which one is selected at each iteration at random. In \cite{Pe} Pelikan gave sufficient conditions under which a random system using a finite number of piecewise $C^2$-transformations on the interval has absolutely continuous invariant measures. He also discussed the possible number of ergodic components. Around the same time a similar result was obtained by Morita in \cite{Mo}, allowing for the possibility to choose from an infinite family of maps as well. In more recent years these results have been generalised in various ways. See \cite{Bu,GoBo,BaGo1,In} for example.

\vskip .2cm

Finding an explicit formula for the density functions of these absolutely continuous invariant measures, however, is a different matter. Here the Perron-Frobenius operator can only help if one can make an educated guess. An explicit expression for the invariant density is therefore available only for specific families of maps. In 1957 R\'enyi gave in \cite{Ren1} an expression for the invariant density of the $\beta$-transformation $x \mapsto \beta x \pmod 1$ in case $\beta=\frac{1+\sqrt 5}{2}$, the golden mean. Later Parry and Gel'fond gave a general formula for the invariant density of the $\beta$-transformation in \cite{Parry60,Gel}. In \cite{DK10} generalisations of the $\beta$-transformation were considered. A more general set-up allowing different slopes was proposed in \cite{Ko} by Kopf. He introduced for any piecewise linear expanding interval map satisfying some minor restraints a matrix $M$ and associated each absolutely continuous invariant measure of the system to a vector from the null space of $M$. Some twenty years later, G\'ora developed in \cite{Go} a similar procedure for deterministic piecewise linear eventually expanding interval maps. Unless the map in question has many onto branches, the matrix involved in the procedure from \cite{Go} is of higher dimension than the one used in \cite{Ko}.

\vskip .2cm
For random maps not much is known. An exception is the random $\beta$-transformation, which was first introduced in \cite{DaKr} by Dajani and Kraaikamp and uses random combinations of two piecewise linear maps with constant slope $\beta >1$. It has a unique absolutely continuous invariant measure, as proved in \cite{DaVr2}. In \cite{Ke} Kempton gave a formula for the invariant density in case one chooses the two base maps with equal probability. Recently Suzuki extended these results in \cite{Su} to include the non-uniform Bernoulli regime as well.

\vskip .2cm
This article concerns finding explicit expressions for invariant densities of random systems. We consider any finite or countable family $\{ T_j:[0,1] \to [0,1] \}$ of piecewise linear maps that are expanding on average. The random system $T$ is given by choosing at each step one of these maps using a probability vector $\mathbf p = (p_j)$. The existence of an absolutely continuous invariant measure for such systems is guaranteed by \cite{Pe} for a finite family and by \cite{Bu, In} in the countable case. The main result of this article is that we provide a procedure to construct explicit formulas for invariant probability densities of the random system $T$. This is the content of Theorem~\ref{thmm}. The results from Theorem~\ref{thmm} cover those from \cite{Ke} and \cite{Su} regarding the expression for the invariant density as a special case. In case we assume that all maps $T_j $ are expanding, we obtain the stronger result that the procedure leading to Theorem~\ref{thmm} actually produces all absolutely continuous invariant measures of $T$. We prove this in Theorem~\ref{t:alldensities}. 

\vskip .2cm
The paper is outlined as follows. In the second section we specify our set-up and introduce the necessary assumptions and notation. The third section is devoted to the definition of a matrix $M$ and to the proof that the null space of $M$ is non-trivial. In the fourth section we prove Theorem~\ref{thmm}, relating each non-trivial vector $\gamma$ from the null space of $M$ to the density $h_\gamma$ of an absolutely continuous invariant measure of the system $T$. In the fifth section we prove Theorem~\ref{t:alldensities} on when we get all invariant densities. It is in this section that the extra difficulties that we had to overcome for dealing with random systems instead of deterministic ones, are most visible. Finally, we apply the results to some examples. In the sixth section we consider random tent maps, random $W$-shaped maps and various random $\beta$-transformations. In the last section we elaborate on a system related to representations of real numbers: a random L\"uroth map with a hole.

\section*{Acknowledgement}
We would like to thank Karma Dajani for suggesting the use of \cite{Ko}. The second author is supported by the NWO TOP-Grant No.~614.001.509.

\section{Preliminaries}\label{s:preliminaries}
Let $\Omega \subseteq \mathbb N$ and let $\{ T_j : [0,1] \rightarrow [0,1] \}_{j \in \Omega}$ be a family of piecewise linear transformations. Consider a positive probability vector $\mathbf p = (p_j)_{j \in \Omega}$, i.e., $p_j > 0$ for all $j \in \Omega$ and $\sum_{j \in \Omega} p_j=1$. We call the system $T$ a {\em random system} of the interval $[0,1]$ of probability $\mathbf p$, if for $x \in [0,1]$ and $j \in \Omega$,
$$T(x):=T_j(x) \text{ with probability } p_j.$$

\vskip .2cm
A measure $\mu_{\mathbf p}$ on $[0,1]$ is an {\em absolutely continuous invariant measure} for $T$ and $\mathbf p$ if there is a density function $h$, such that for each Borel set $A \subseteq [0,1]$ we have
\begin{equation}\label{q:invdensity}
\mu_{\mathbf p} (A) = \int_A h \, d\lambda = \sum_{j \in \Omega} p_j\mu_{\mathbf p}(T_j^{-1}A),
\end{equation}
where $\lambda$ denotes the one-dimensional Lebesgue measure. 

\vskip .2cm
Such a random system $T$ can also be described by a {\em pseudo skew-product system}. In that case, let $\sigma: \Omega^{\mathbb N} \to \Omega^{\mathbb N}$ be the left shift on sequences and define the map $R: \Omega^{\mathbb N} \times [0,1] \to \Omega^{\mathbb N} \times [0,1]$ by $R(\omega,x) = (\sigma(\omega), T_{\omega_1}x)$. If $m_{\mathbf p}$ is the $\mathbf p$-Bernoulli measure on $\Omega^{\mathbb N}$, then $m_{\mathbf p} \times \mu_{\mathbf p}$ is an invariant measure for $R$. We call $R$ the pseudo skew-product system associated to $T$.

\vskip .2cm
We put some assumptions on the systems $T$ we consider.

\vskip .2cm
\noindent {\bf (A1)} Assume that the set of all the critical points of the maps $T_j$ is {\em finite}.

\vskip .2cm
\noindent Denote these critical points by $0  = z_0 < z_1 < \cdots <z_N=1$. The points $z_i$ together specify a common partition $\{ I_i\}_{1 \leq i  \leq N}$ of subintervals of $[0,1]$, such that all maps $T_j$ are monotone on each of the intervals $I_i$. Hence, there exist $k_{i,j}, d_{i,j} \in \mathbb{R}$ such that the maps $T_{i,j}:=T_j|_{I_i}$ are given by
$$T_{i,j}(x)=k_{i,j} x +d_{i,j}.$$

\noindent {\bf (A2)} Assume that $T$ is {\em expanding on average} with respect to $\mathbf p $, i.e., assume that there is a constant $0 < \rho < 1$, such that for all $x \in [0,1]$, $\sum_{j \in \Omega} \frac{p_j}{|T'_j(x)|}  \leq \rho < 1$. This is equivalent to assuming that for each $1 \le i \le N$,
\[ \sum_{j \in \Omega} \frac{p_j}{|k_{i,j}|} \leq \rho < 1.\]

\vskip .2cm
\noindent Under these conditions the random system $T$ satisfies the conditions (a) and (b) from \cite{In}, which studies the existence of invariant densities $h$ satisfying \eqref{q:invdensity} using the Perron-Frobenius operator. For the deterministic maps $T_j:[0,1] \to [0,1]$, $j \in \Omega$, the Perron-Frobenius operator on $L^1(\lambda)$ is given by
\[ P_{T_j} f(x) = \sum_{y \in T_j^{-1}\{x\}} \frac{f(y)}{|T'_j(y)|}.\]
The {\em random Perron-Frobenius operator} is then defined by
\begin{equation}\label{q:PF}
P_T f = \sum_{j \in \Omega} p_j P_{T_j} f.
\end{equation}
The operator $P_T$ is clearly linear and positive. An $L^1(\lambda)$-function $h$ is called {\em $T$-invariant} for the random system $T$ if it is a fixed point of $P_T$, i.e., if it satisfies $P_T h =h$ $\lambda$-almost everywhere. A density function $h$ is the density of a measure $\mu_{\mathbf p}$ satisfying \eqref{q:invdensity} if and only if it is a fixed point of $P_T$. From \cite[Theorem 5.2]{In} it follows that a $T$-invariant measure $\mu_{\mathbf p}$ of the form \eqref{q:invdensity}, and hence a $T$-invariant function $h$, exists. Inoue obtained this result by showing that the operator $P_T$, applied to functions of bounded variation, satisfies a Lasota-Yorke type inequality. From the famous Ionescu-Tulcea and Marinescu Theorem one can then deduce much more than mere existence of an absolutely continuous invariant measure, it says that $P_T$ as an operator on the space of functions of bounded variation is quasi-compact. The specific implications of the quasi-compactness of $P_T$ that we use in this paper are the following. The eigenvalue $1$ of $P_T$ has a finite dimensional eigenspace. In other words, the subspace of $L^1(\lambda)$ of $T$-invariant functions is a finite-dimensional sublattice of the space of functions of bounded variation. As such, it has a finite base $H = \{ v_1, \ldots, v_r\}$ of $T$-invariant density functions of bounded variation, each corresponding to an ergodic measure, so that any other $T$-invariant $L^1(\lambda)$-function $h$ can be written as a linear combination of the $v_i$: $h = \sum_{i=1}^r c_i v_i$ for some constants $c_i \in \mathbb R$. Furthermore, if we set $U_i:= \{ x \, : \, v_i(x) >0\}$ for the \textit{support} of the function $v_i$, then each $U_i$ is forward invariant under $T$ in the sense that
\begin{equation}\label{q:forward}
\lambda \Big( U_i \triangle \bigcup_{j \in \Omega} T_j(U_i) \Big) =0,
\end{equation}
where $\triangle$ denotes the symmetric difference. Also, the sets $U_i$ are mutually disjoint and none of the sets $U_i$ can properly contain another forward invariant set. We will use these properties in the proofs from Section~\ref{s:alldensities}. An account of these implications on the operator $P_T$ can be found in \cite{Pe,Mo,Bu,In}, for example. For more information, we also refer to standard textbooks like \cite{BoGo} and \cite{LM}.

\vskip .2cm
In this article we find $T$-invariant functions $h:[0,1]\to \mathbb R$ by linking them to the vectors from the null space of a matrix $M$. To guarantee that this null space is non-trivial, we formulate three additional assumptions that are easily to verify for any given systems. Firstly, we assume that not all the lines $x \mapsto k_{i,j}x + d_{i,j}$, $1 \leq i \leq N$, with respective weights $p_j$, have a common intersection point with the diagonal. More precisely, consider for each interval $I_i$ the weighted intersection point with the diagonal
\[ x = \sum_{j \in \Omega} p_j \Big(\frac{x}{k_{i,j}} -\frac{d_{i,j}}{k_{i,j}} \Big).\]
Our third assumption states that for each $i$ there is an $n$, such that these points do not coincide.

\vskip .2cm

\noindent {\bf (A3)} Assume that for each $1 \le i \le N$, there is an $1 \le n \le N$,  such that
\[
 \frac{ \sum_{j \in \Omega} \frac{p_j}{k_{i,j}} d_{i,j}  }{1-  \sum_{j\in \Omega}  \frac{p_j}{k_{i,j}}} \neq \frac{ \sum_{j\in \Omega} \frac{p_j}{k_{n,j}}d_{n,j} }{1-  \sum_{j\in \Omega} \frac{p_j}{k_{n,j}}}.
\]

\vskip .2cm
\noindent Note that if $d_{i,j} < 0$, then $k_{i,j} > -d_{i,j}$ and if $d_{i,j}>1$, then $k_{i,j} < 1-d_{i,j}$. Hence, in all cases $|d_{i,j}| < |k_{i,j}|+1$ and by (A2),
\begin{equation}\label{q:finiteD}
\sum_{j \in \Omega} \frac{p_j}{|k_{i,j}|} |d_{i,j}| \le 1+ \rho.
\end{equation}
So, the quantities in (A3) are all finite. Our fourth assumption is on the orbits of the points 0 and 1.

\vskip .2cm
\noindent {\bf (A4)} For each $j$, assume that 
\[
d_{1,j}= \left \{
\begin{aligned}
& 0,  &\mbox{ if } k_{1,j}  >0, \cr
&1, &\mbox{ if } k_{1,j}  <0,\end{aligned}\right.
\qquad \text{ and } \qquad
d_{N,j}= \left \{
\begin{aligned}
& 1-k_{N,j}, &\mbox{ if } k_{N,j}  >0, \cr
& -k_{N,j}, &\mbox{ if } k_{N,j}  <0.\end{aligned}\right.
\]

\vskip .2cm
In other words, the points 0 and 1 are mapped to 0 or 1 under all maps $T_j$, making the system continuous at the origin, when we consider it as acting on the circle $\mathbb R / \mathbb Z$ with the points 0 and 1 identified. Since we can deal with finitely many discontinuities, (A4) is not necessary for our results to hold, but it makes computations easier. Any system not satisfying it can be extended to a system that does satisfy this condition and for which no absolutely continuous invariant measure puts weight on the added pieces. See Figure~\ref{f:extendedmap} for an illustration and see Section \ref{sec5} for a concrete example, given by the random $(\alpha,\beta)$-transformation.

\begin{figure}[h!]
\centering
\subfigure{
\begin{tikzpicture}[scale=4] 

\draw(0,0)node[below]{\small $0$}--(.2,0)node[below]{$z_1$}--(.35,0)node[below]{$z_2$}--(.5,0)node[below]{$z_3$}--(.65,0)node[below]{$z_4$}--(.75,0)node[below]{$z_5$}--(1,0)node[below]{\small $1$}--(1,1)--(.875,1)node[above]{$I_6$}--(.7,1)node[above]{$I_5$}--(.575,1)node[above]{$I_4$}--(.425,1)node[above]{$I_3$}--(.275,1)node[above]{$I_2$}--(.1,1)node[above]{$I_1$}--(0,1)node[left]{\small $1$}--(0,.7)node[left]{$a_{1,2}$}--(0,.55)node[left]{$a_{1,1}$}--(0,.4)node[left]{$b_{1,1}$}--(0,.3)node[left]{$a_{1,3}$}--(0,.15)node[left]{$b_{1,3}$}--(0,0)node[left]{$b_{1,2}$};

\draw[dotted](.2,0)--(.2,1)(.35,0)--(.35,1) (.5,0)--(.5,1)(.65,0)--(.65,1)(.75,0)--(.75,1);
\draw[dashed](.2,.7)--(0,.7)(.2,.55)--(0,.55)(.2,.4)--(0,.4)(.2,.3)--(0,.3)(.2,.15)--(0,.15);

\draw[line width=0.3mm, black] (0,0)--(.2,.3)(0,1)--(.2,.7)(.2,.15)--(.35,.8)(.2,.4)--(.35,.2)(.35,.6)--(.5,.1)(.35,.4)--(.5,.8)(.5,.4)--(.65,.2)(.65,.0)--(.75,.65)(.75,.5)--(1,1)(.75,.3)--(1,1)(0,0)--(.2,.55)(.2,0)--(.35,.6)(.35,.9)--(.5,.6)(.65,.96)--(.75,.4)(.75,.05)--(1,1);

\filldraw[draw=black, fill=black] (0,0) circle (.3pt);
\filldraw[draw=black, fill=black] (0,1) circle (.3pt);
\filldraw[draw=black, fill=black] (.2,.3) circle (.3pt);
\filldraw[draw=black, fill=black] (.2,.7) circle (.3pt);
\filldraw[draw=black, fill=black] (.35,.8) circle (.3pt);
\filldraw[draw=black, fill=black] (.35,.2) circle (.3pt);
\filldraw[draw=black, fill=black] (.5,.1) circle (.3pt);
\filldraw[draw=black, fill=black] (.5,.8) circle (.3pt);
\filldraw[draw=black, fill=black] (1,1) circle (.3pt);
\filldraw[draw=black, fill=white] (.2,.15) circle (.3pt);
\filldraw[draw=black, fill=white] (.2,.4) circle (.3pt);
\filldraw[draw=black, fill=black] (.75,.65) circle (.3pt);
\filldraw[draw=black, fill=black] (.35,.6) circle (.3pt);
\filldraw[draw=black, fill=white] (.35,.4) circle (.3pt);
\filldraw[draw=black, fill=white] (.5,.4) circle (.3pt);
\filldraw[draw=black, fill=black] (.65,.2) circle (.3pt);
\filldraw[draw=black, fill=white] (.65,.0) circle (.3pt);
\filldraw[draw=black, fill=white] (.75,.5) circle (.3pt);
\filldraw[draw=black, fill=white] (.75,.3) circle (.3pt);
\filldraw[draw=black, fill=black] (.2,.55) circle (.3pt);
\filldraw[draw=black, fill=black] (.75,.4) circle (.3pt);
\filldraw[draw=black, fill=white] (.2,0) circle (.3pt);
\filldraw[draw=black, fill=white] (.35,.9) circle (.3pt);
\filldraw[draw=black, fill=black] (.5,.6) circle (.3pt);
\filldraw[draw=black, fill=white] (.65,.96) circle (.3pt);
\filldraw[draw=black, fill=white] (.75,.05) circle (.3pt);
\end{tikzpicture}}
\hspace{5mm}
\subfigure{
\begin{tikzpicture}[scale=4]
\draw(0,0)node[below]{\small $0$}--(.25,0)node[below]{}--(.75,0)node[below]{}--(1,0)node[below]{\small $1$}--(1,1)--(0,1)node[left]{\small $1$}--(0,0);
\draw(.15,.15)node[below]{}--(.85,.15)node[below]{}--(.85,.85)--(.15,.85)node[left]{}--(.15,.15);

\draw[dotted](.35,.15)--(.35,.85) (.5,.15)--(.5,.85)(.65,.15)--(.65,.85);

\draw[draw=none,fill=blue!40,nearly transparent]  (0,0) -- (.15,0) -- (.15,1) -- (0,1)--cycle;
\draw[draw=none, fill=blue!40,nearly transparent]  (.15,0) -- (.85,.0) -- (.85,.15) -- (.15,.15)--cycle;
\draw[draw=none, fill=blue!40,nearly transparent]  (1,0) -- (1,1) -- (.85,1) -- (.85,0)--cycle;
\draw[draw=none, fill=blue!40,nearly transparent]  (.85,.85) -- (.85,1) -- (.15,1) -- (.15,.85)--cycle;

\draw[line width=0.2mm, black] (0,0)--(.15,.2)(.15,.55)--(.35,.15)(.15,.3)--(.35,.85)(.35,.6)--(.5,.15)(.35,.4)--(.5,.8)(.5,.4)--(.65,.2)(.5,.8)--(.65,.3)(.65,.45)--(.85,.75)(.65,.2)--(.85,.5)(.85,.8)--(1,1)(.15,.7)--(.35,.3)(.15,.8)--(.35,.5)(.35,.15)--(.5,.3)(.35,.7)--(.5,.2)(.5,.35)--(.65,.15)(.5,.6)--(.65,.75)(.65,.35)--(.85,.65)(.65,.25)--(.85,.55);

\filldraw[draw=black, fill=black] (0,0) circle (.2pt);
\filldraw[draw=black, fill=white] (.15,.2) circle (.2pt);
\filldraw[draw=black, fill=black] (.15,.55) circle (.2pt);
\filldraw[draw=black, fill=black] (.15,.3) circle (.2pt);
\filldraw[draw=black, fill=black] (.35,.15) circle (.2pt);
\filldraw[draw=black, fill=black] (.35,.85) circle (.2pt);
\filldraw[draw=black, fill=black] (.5,.15) circle (.2pt);
\filldraw[draw=black, fill=black] (.5,.3) circle (.2pt);
\filldraw[draw=black, fill=black] (.5,.8) circle (.2pt);
\filldraw[draw=black, fill=white] (.65,.45) circle (.2pt);
\filldraw[draw=black, fill=black] (1,1) circle (.2pt);
\filldraw[draw=black, fill=white] (.85,.8) circle (.2pt);
\filldraw[draw=black, fill=black] (.85,.75) circle (.2pt);
\filldraw[draw=black, fill=black] (.85,.5) circle (.2pt);
\filldraw[draw=black, fill=white] (.35,.6) circle (.2pt);
\filldraw[draw=black, fill=white] (.35,.4) circle (.2pt);
\filldraw[draw=black, fill=white] (.5,.4) circle (.2pt);
\filldraw[draw=black, fill=black] (.65,.2) circle (.2pt);
\filldraw[draw=black, fill=black] (.65,.3) circle (.2pt);

\filldraw[draw=black, fill=black] (.15,.7) circle (.2pt);
\filldraw[draw=black, fill=black] (.15,.8) circle (.2pt);
\filldraw[draw=black, fill=black] (.35,.3) circle (.2pt);
\filldraw[draw=black, fill=black] (.35,.5) circle (.2pt);
\filldraw[draw=black, fill=white] (.35,.7) circle (.2pt);
\filldraw[draw=black, fill=black] (.5,.2) circle (.2pt);
\filldraw[draw=black, fill=white] (.5,.35) circle (.2pt);
\filldraw[draw=black, fill=white] (.5,.6) circle (.2pt);
\filldraw[draw=black, fill=black] (.65,.15) circle (.2pt);
\filldraw[draw=black, fill=black] (.65,.75) circle (.2pt);
\filldraw[draw=black, fill=white] (.65,.25) circle (.2pt);
\filldraw[draw=black, fill=white] (.65,.35) circle (.2pt);
\filldraw[draw=black, fill=black] (.85,.65) circle (.2pt);
\filldraw[draw=black, fill=black] (.85,.55) circle (.2pt);

\end{tikzpicture}}
\caption{On the left is an arbitrary map $T$ satisfying the above conditions. On the right we see a random map $T$ in the white box that does not satisfy (A4). By adding the branches in the blue part and rescaling, we obtain a system that does satisfy these conditions. Note that any point in the blue part (except for 0 and $1$) moves to the white part after a finite number of iterations and stays there. Hence, any invariant density will equal 0 on the blue part.}
\label{f:extendedmap}
\end{figure}
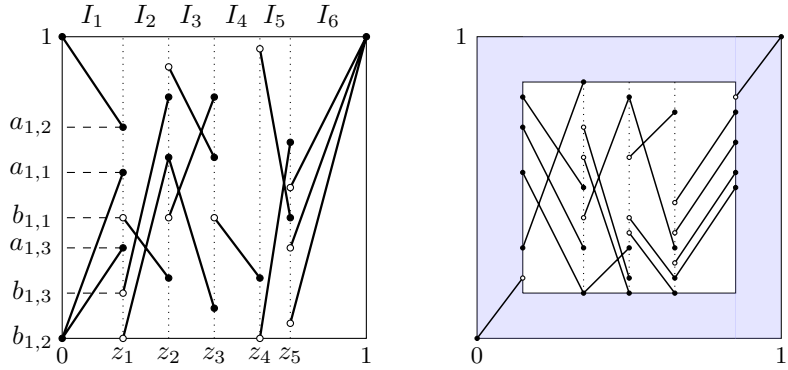

Finally, we include an assumption stating that the weighted inverse derivative cannot be 0 anywhere.

\vskip .2cm
\noindent {\bf (A5)} Assume that for any $x \in [0,1]$, the weighted inverse derivative satisfies $\sum_{j \in \Omega} \frac{p_j}{T'_j(x)}  \neq 0$. This is equivalent to assuming that for each $1 \le i \le N$,
\[ \sum_{j \in \Omega} \frac{p_j}{k_{i,j}} \neq 0.\]

\vskip .3cm
Conditions (A3) and (A5) are sufficient to get our main results, but probably not necessary. Note that (A5) is automatically fulfilled for any deterministic Lasota-Yorke map (and in particular for any deterministic piecewise linear map) and also for any random system for which on each interval $I_i$ the derivatives of all maps $T_j$ have the same sign. The last section contains an example that does not satisfy (A5) for a specific choice of $\mathbf p$. We will see that the procedure which leads to our main results still gives all invariant densities in that case. Moreover, if (A5) is not satisfied for some probability vector $\mathbf p$, then changing $\mathbf p$ slightly already lifts this restriction.

\section{A homogeneous system with a non-trivial solution}
An invariant measure reflects the dynamics of a system. For the maps $T_j$, $j \in \Omega$, considered in this article, the dynamics is determined by the orbits of the endpoints of the lines $x \mapsto k_{i,j}x + d_{i,j}$, $1 \leq i \leq N$. We start this section by defining some quantities that keep track of the possible orbits of these points.

\vskip .2cm
Let $\Omega^*$ be the set of all finite strings of elements from $\Omega$ together with the empty string $\varepsilon$. For $t \ge 0$, let $\Omega^t \subseteq \Omega^*$ denote the subset of those strings that have length $t$. So in particular, $\Omega^0 = \{ \varepsilon \}$. Let $|\omega|$ denote the length of the string $\omega$. For any string $\omega \in \Omega^*$ with $|\omega| \ge t$, we let $\omega_1^t$ denote the starting block of length $t$. For two strings $\omega, \omega' \in \Omega^*$ we simply write $\omega\omega'$ for their concatenation. Each element $\omega \in \Omega^t$ defines a possible start of an orbit of a point in $[0,1]$ by composition of maps: for $x \in [0,1]$ and $\omega = \omega_1 \cdots \omega_t \in \Omega^t$, define
\[ T_\omega (x) = T_{\omega_t} \circ T_{\omega_{t-1}} \circ \cdots \circ T_{\omega_1} (x)\]
and set $T_\varepsilon(x)=x$. For $\omega \in \Omega^*$, set $\tau_\omega (y,0)=1$ and for $1 \le t\le |\omega|$, set
\[ \tau_{\omega}(y,t) := \frac{p_{\omega_t}}{k_{i, \omega_t}}, \qquad \text{ if } \ T_{\omega^{t-1}_1} (y) \in I_{i}.
\]
Define
\begin{equation}\label{delta}
\delta_{\omega}(y, t):= \prod_{n=0 }^{t} \tau_{\omega}(y, n),
\end{equation}
so that $\delta_\omega(y,t)$ is the weighted slope of the map $T_{\omega_1^t}$ at the point $y$. Note that $\tau_{\omega}(y,t)$ and $\delta_{\omega}(y, t)$ only depend on the block $\omega_1^t$ and not on what comes after. Moreover, for a concatenation $\omega j$, given by any block $\omega$ with $|\omega|=t-1$ and any $j \in \Omega$, it holds that $ \tau_{\omega j}(y,t) = \tau_j (T_\omega(y),1)$ and $\delta_{\omega j}(y,t) = \tau_{\omega j} (y,t) \delta_\omega(y,t-1)  $. By assumption (A2) we have that for any $y \in [0,1]$,
\begin{equation} \label{q:convergence}
\begin{split}
\Big| \sum_{t \ge 0} \sum_{\omega \in \Omega^t} \delta_\omega(y,t) \Big| \le \ & 1 + \sum_{t \ge 1} \sum_{\omega \in \Omega^{t-1}} \sum_{j \in \Omega} |\delta_\omega(y,t-1)| |\tau_{\omega j} (y,t)|\\
\le \ & 1+  \sum_{t \ge 1} \sum_{\omega \in \Omega^{t-1}} |\delta_\omega(y,t-1)| \rho \le \frac1{1-\rho}.
\end{split}\end{equation}
Let $\mathbf{1}_A$ denote the characteristic function of the set $A$ and set
\[
\KI_n(y):= \sum_{t \geq 1 } \sum_{\omega\in \Omega^t}  \delta_{\omega}(y,t) \mathbf{1}_{I_n} (T_{\omega^{t-1}_1 }(y)) \qquad \text{ for } 1 \leq n \leq N.
\]
Then $\KI_n(y)$ keeps track of all the times the random orbit of $y$ visits $I_n$ and adds the corresponding weighted slopes. For $1 \leq i \leq N-1$, set $A_i:=I_1 \cup ... \cup I_i$ and $B_i:=I_{i+1} \cup ... \cup I_N$. We define 
\begin{equation}\label{KB}
\begin{split}
\KA_i(y):= \sum_{t \geq 0 } \sum_{\omega \in \Omega^t} \delta_{\omega}(y,t) \mathbf{1}_{A_i}(T_{\omega}(y)),\\
\KB_i(y):= \sum_{t \geq 0 } \sum_{\omega \in \Omega^t} \delta_{\omega}(y,t) \mathbf{1}_{B_i}(T_{\omega}(y)).
\end{split}
\end{equation}
By \eqref{q:convergence} $|\KI_n|$, $|\KA_i|$ and $|\KB_i|$ are finite for all $y \in [0,1]$. For each $1 \le n \le N$, let $S_n$ be the average inverse of the slope: 
\[ S_n := \sum_{j \in \Omega} \frac{ p_j}{k_{n,j}},\]
which is non-zero by (A5), so that $S_n^{-1}$ is well defined. The next two lemmas give some identities that we will use later.
\begin{lem}\label{l:KAKB} For each $y \in [0,1]$ and $1 \le i \le N-1$ we have
\[ \KA_i(y) = \sum_{n=1}^i S_n^{-1} \KI_n(y) \qquad \text{and} \qquad \KB_i(y) = \sum_{n=i+1}^N S_n^{-1} \KI_n(y).\]
\end{lem}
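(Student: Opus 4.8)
The plan is to introduce, for $1 \le n \le N$, the auxiliary quantities
\[ \hKI_n(y) := \sum_{t \ge 0} \sum_{\omega \in \Omega^t} \delta_\omega(y,t)\, \mathbf{1}_{I_n}(T_\omega(y)), \]
which are absolutely convergent by \eqref{q:convergence}, and then to establish the two facts
\[ \KI_n(y) = S_n\, \hKI_n(y) \qquad \text{and} \qquad \KA_i(y) = \sum_{n=1}^i \hKI_n(y), \quad \KB_i(y) = \sum_{n=i+1}^N \hKI_n(y). \]
Dividing the first identity by $S_n$ and substituting it into the other two then gives the lemma at once.

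To prove $\KI_n(y) = S_n\, \hKI_n(y)$, I would reindex the series defining $\KI_n$ by splitting off the last symbol of each string. Every $\omega \in \Omega^t$ with $t \ge 1$ is written uniquely as $\omega = \nu j$ with $\nu \in \Omega^{t-1}$ and $j \in \Omega$, and for such $\omega$ one has $T_{\omega_1^{t-1}}(y) = T_\nu(y)$. From the definition \eqref{delta} of $\delta$, whose first $t$ factors depend only on $\nu$, one gets $\delta_{\nu j}(y,t) = \delta_\nu(y,t-1)\, \tau_{\nu j}(y,t)$, and when $T_\nu(y) \in I_n$ the last factor equals $\tau_{\nu j}(y,t) = p_j/k_{n,j}$. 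Hence
\[ \delta_{\nu j}(y,t)\, \mathbf{1}_{I_n}(T_\nu(y)) = \delta_\nu(y,t-1)\, \frac{p_j}{k_{n,j}}\, \mathbf{1}_{I_n}(T_\nu(y)). \]
Summing over $j \in \Omega$ first pulls out the factor $S_n = \sum_{j \in \Omega} p_j/k_{n,j}$, and then summing over $\nu \in \Omega^{t-1}$ and over $t \ge 1$ leaves exactly $\hKI_n(y)$; the reordering of terms is legitimate because of the absolute convergence in \eqref{q:convergence}.

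For the remaining two identities I would simply use that $\{I_1,\dots,I_N\}$ is a partition of $[0,1]$, so that $\mathbf{1}_{A_i} = \sum_{n=1}^i \mathbf{1}_{I_n}$ and $\mathbf{1}_{B_i} = \sum_{n=i+1}^N \mathbf{1}_{I_n}$ hold pointwise. Inserting these into \eqref{KB} and interchanging the finite sum over $n$ with the absolutely convergent double sum over $t$ and $\omega$ turns the definitions of $\KA_i$ and $\KB_i$ directly into $\sum_{n=1}^i \hKI_n(y)$ and $\sum_{n=i+1}^N \hKI_n(y)$.

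There is no real analytic obstacle here: the proof is essentially a reindexing exercise, and the only steps that need a line of justification are the bijection $\omega \leftrightarrow (\nu,j)$ between $\Omega^t$ (for $t \ge 1$) and $\Omega^{t-1}\times\Omega$, the absolute summability provided by \eqref{q:convergence} that permits all the rearrangements, and the fact that $S_n \neq 0$, so that division by $S_n$ makes sense (should some $S_n$ vanish, the first identity forces $\KI_n \equiv 0$, and $S_n^{-1}\KI_n$ is then read as $\hKI_n$). The one genuinely substantive manipulation is the computation in the first identity, which is where the averaged inverse slope $S_n$ is produced.
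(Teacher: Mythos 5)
Your proof is correct and is essentially the paper's own argument: the paper's equation \eqref{q:tonextt} is exactly your identity $\KI_n(y)=S_n\sum_{t\ge 0}\sum_{\omega\in\Omega^t}\delta_\omega(y,t)\mathbf{1}_{I_n}(T_\omega(y))$, obtained by the same shift of index between words of length $t$ and pairs $(\nu,j)$, and the decomposition $\mathbf{1}_{A_i}=\sum_{n\le i}\mathbf{1}_{I_n}$, $\mathbf{1}_{B_i}=\sum_{n>i}\mathbf{1}_{I_n}$ is likewise how the paper concludes (it treats $\KB_i$ via $\KA_i+\KB_i$ rather than directly, a negligible difference). Your remark about $S_n\neq 0$ is a harmless extra precaution that the paper implicitly assumes anyway, since $S_n^{-1}$ appears in the statement and in $K_n$, $D_n$.
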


\begin{proof}
For any $1 \le n \le N$ we have
\begin{equation}\label{q:tonextt}
\begin{split}
\sum_{t \ge 0} \sum_{\omega \in \Omega^t} \delta_\omega(y,t) {\mathbf 1}_{I_n} (T_\omega(y)) & = 
\sum_{t \ge 0} \sum_{\omega \in \Omega^t}\Big(\sum_{j\in \Omega} \frac{p_j}{k_{n,j}}\Big)^{-1} \Big( \sum_{j\in \Omega} \frac{p_j}{k_{n,j}} \Big) \delta_\omega(y,t) {\mathbf 1}_{I_n} (T_\omega(y)) \\
& = \sum_{t \ge 0} \sum_{\omega \in \Omega^t}S_n^{-1} \sum_{j\in \Omega} \tau_{\omega j}(y,t+1)  \delta_\omega(y,t) {\mathbf 1}_{I_n} (T_\omega(y))\\
& = S_n^{-1} \sum_{t \ge 0} \sum_{\omega \in \Omega^{t+1}}  \delta_\omega(y,t+1) {\mathbf 1}_{I_n} (T_{\omega_1^t}(y)) = S_n^{-1} \KI_n(y).
\end{split}
\end{equation}
Putting this in the definition of $\KA_i(y)$ from \eqref{KB} gives the first part of the lemma. Using \eqref{q:tonextt}, we also get that 
\begin{equation}\label{q:kaandb}
\KA_i(y) + \KB_i (y) = \sum_{t \ge 0} \sum_{\omega \in \Omega^t} \delta_\omega(y,t) = \sum_{n=1}^N S_n^{-1} \KI_n(y).
\end{equation}
The result for $\KB_i$ follows.
\end{proof}

Define
\begin{equation}\nonumber
K_n:= S_n^{-1} -1 \qquad \text{ and } \qquad
D_n:= S_n^{-1} \bigg( \sum_{j \in \Omega} \frac{p_j}{k_{n,j}} d_{n,j}\bigg).
\end{equation}
Then
\[ \frac{D_n}{K_n}= \frac{ \sum_{j\in \Omega} \frac{p_j}{k_{n,j}}d_{n,j}}{1- \sum_{j\in \Omega} \frac{p_j}{k_{n,j}} },\]
so that we can rephrase assumption (A3) as follows: for each $1 \le i \le N$, there is an $1 \le n \le N$, such that $\frac{D_i}{K_i} \neq \frac{D_n}{K_n}$. We have the following properties for $K_n$ and $D_n$.

\begin{lem}\label{rel}
Let $y \in [0,1]$. Then 
\begin{equation}\nonumber
\sum_{n=1}^N K_n \KI_n(y)=1 \qquad \text{ and }  \qquad
-\sum_{n=1}^N D_n \KI_n(y)=y.
\end{equation}
\begin{proof}
For the first part, note that by \eqref{q:kaandb} we have
\begin{equation}\label{q:1plus}
\sum_{n=1}^N S_n^{-1} \KI_n(y) = 1+ \sum_{t \ge 1} \sum_{\omega \in \Omega^t} \delta_{\omega}(y,t) = 1 + \sum_{n=1}^N \KI_n(y).
\end{equation}
For the second part, let $1 \le i \le N$ be such that $y \in I_i$. Then for $ j \in \Omega$ we get $T_{i,j}(y)=k_{i,j} y +d_{i,j}$, and thus
$$y=  \sum_{j\in \Omega} \Big( \frac{p_j}{k_{i,j}}T_{i,j}(y) - \frac{p_j}{k_{i,j}}d_{i,j} \Big) .$$
For $t \geq 1$ and $\omega \in \Omega^*$ with $|\omega| \ge t$, set
\begin{equation}\label{q:theta}
\theta_{\omega}(y,t):= -\frac{p_{\omega_t}}{k_{n, {\omega_t} }}d_{n, {\omega_t}}  \qquad \text{ if } \ T_{\omega^{t-1}_1 }(y) \in I_n .
\end{equation}
Then 
\begin{equation}\label{star}
y= \sum_{\omega \in \Omega} \tau_{\omega}(y,1)T_{\omega}(y)+ \theta_{\omega }(y,1).
\end{equation}
Since $\tau_j(T_\omega(y),1) = \tau_{\omega j}(y,2)$ and $\theta_j(T_\omega(y),1)=\theta_{\omega j}(y,2)$, we obtain for $\omega \in \Omega$ that
\begin{equation}\label{starstar}
T_\omega (y)= \sum_{j \in \Omega} \tau_{\omega j}(y,2)T_{\omega j}(y)+ \theta_{\omega j}(y,2).
\end{equation}
Repeated application of (\ref{starstar}) in (\ref{star}), together with the definition of $\delta_\omega$ from (\ref{delta}), yields after $n$ steps,
\[
y= \sum_{t=1}^{n+1} \sum_{\omega \in \Omega^t} \delta_\omega(y,t-1) \theta_\omega(y,t) + \sum_{\omega \in \Omega^{n+1}} \delta_\omega (y,n+1) T_\omega(y).\]
From \eqref{q:convergence} we obtain that $\displaystyle \lim_{n \rightarrow \infty} \sum_{\omega \in \Omega^{n+1}} \big| \delta_{\omega}(y,n+1) T_{\omega}(y)\big|  = 0$. Hence, by (A2), \eqref{q:finiteD} and \eqref{q:convergence},
\begin{align}\label{q:yiterated}
y= & \sum_{t \geq 0} \sum_{\omega \in \Omega^{t+1}} \delta_{\omega}(y,t) \theta_{\omega}(y,t+1)\\
 = & - \sum_{n=1}^N \sum_{t \geq 0} \sum_{\omega \in \Omega^t} \delta_{\omega}(y,t) \mathbf{1}_{I_n} (T_{\omega}(y)) \bigg( \sum_{j\in \Omega} \frac{p_j}{k_{n,j}} d_{n,j} \bigg) \nonumber \\
 = & -\sum_{n=1}^N S_n^{-1} \bigg( \sum_{j\in \Omega} \frac{p_j}{k_{n,j}} d_{n,j} \bigg) \sum_{t \ge 0} \sum_{\omega \in \Omega^t} \delta_\omega(y,t) \bigg( \sum_{j\in \Omega} \frac{p_j}{k_{n,j}} \bigg) \mathbf{1}_{I_n} (T_\omega(y)) \nonumber \\
=& -\sum_{n=1}^N D_n \sum_{t \ge 0} \sum_{\omega \in \Omega^t} \delta_\omega (y,t) \Big( \sum_{j\in \Omega} \tau_{\omega j}(y,t+1) \Big)\mathbf{1}_{I_n}(T_\omega (y)) \nonumber \\ 
 = & - \sum_{n=1}^N D_n  \sum_{t \ge 0} \sum_{\omega \in \Omega^{t+1}} \delta_\omega (y,t+1)\mathbf{1}_{I_n}(T_{\omega_1^t} (y)) =  - \sum_{n=1}^N D_n \KI_n(y).
\end{align}
\end{proof}
\end{lem}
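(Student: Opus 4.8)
The plan is to prove the two identities separately; in both cases the strategy is to recognise the quantity on the left inside an absolutely convergent series over finite words and then collapse it with the ``multiply and divide by $S_n$'' trick used in the proof of Lemma~\ref{l:KAKB}. For the first identity I would start from \eqref{q:kaandb}, which gives
\[ \sum_{n=1}^N S_n^{-1}\KI_n(y)=\sum_{t\ge 0}\sum_{\omega\in\Omega^t}\delta_\omega(y,t).\]
The $t=0$ summand equals $\delta_\varepsilon(y,0)=1$, while the terms with $t\ge 1$ sum to $\sum_{n=1}^N\KI_n(y)$, since the intervals $I_n$ partition $[0,1]$ and hence $\sum_{n=1}^N\mathbf{1}_{I_n}(T_{\omega_1^{t-1}}(y))=1$. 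Subtracting $\sum_{n=1}^N\KI_n(y)$ from both sides and using $K_n=S_n^{-1}-1$ gives $\sum_{n=1}^N K_n\KI_n(y)=1$; all the series involved converge absolutely by \eqref{q:convergence}, so no difficulty is expected here.

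For the second identity the idea is to expand $y$ itself. If $y\in I_i$, then $T_{i,j}(y)=k_{i,j}y+d_{i,j}$ together with $\sum_j p_j=1$ gives $y=\sum_{j\in\Omega}\frac{p_j}{k_{i,j}}\bigl(T_{i,j}(y)-d_{i,j}\bigr)$. Introducing the constant-term bookkeeping quantity $\theta_\omega(y,t):=-\frac{p_{\omega_t}}{k_{n,\omega_t}}d_{n,\omega_t}$ whenever $T_{\omega_1^{t-1}}(y)\in I_n$, this reads $y=\sum_{j\in\Omega}\bigl(\tau_j(y,1)T_j(y)+\theta_j(y,1)\bigr)$. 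I would then substitute the analogous relation for each $T_\omega(y)$, using $\tau_j(T_\omega(y),1)=\tau_{\omega j}(y,2)$ and $\theta_j(T_\omega(y),1)=\theta_{\omega j}(y,2)$; after $n$ substitutions one obtains
\[ y=\sum_{t=1}^{n+1}\sum_{\omega\in\Omega^t}\delta_\omega(y,t-1)\theta_\omega(y,t)+\sum_{\omega\in\Omega^{n+1}}\delta_\omega(y,n+1)T_\omega(y).\]
Since $|T_\omega(y)|\le 1$ and $\sum_{\omega\in\Omega^{n+1}}|\delta_\omega(y,n+1)|\to 0$ as $n\to\infty$ by \eqref{q:convergence}, the remainder term vanishes in the limit, and (the surviving series converging absolutely thanks to \eqref{q:finiteD}) one gets $y=\sum_{t\ge 0}\sum_{\omega\in\Omega^{t+1}}\delta_\omega(y,t)\theta_\omega(y,t+1)$.

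Finally I would group the words according to which interval $I_n$ contains $T_{\omega_1^t}(y)$: on that set $\sum_{j\in\Omega}\theta_{\omega j}(y,t+1)=-\sum_{j\in\Omega}\frac{p_j}{k_{n,j}}d_{n,j}$, so that
\[ y=-\sum_{n=1}^N\Bigl(\sum_{j\in\Omega}\frac{p_j}{k_{n,j}}d_{n,j}\Bigr)\sum_{t\ge 0}\sum_{\omega\in\Omega^t}\delta_\omega(y,t)\mathbf{1}_{I_n}(T_\omega(y)).\]
Inserting $1=S_n^{-1}\sum_{j\in\Omega}\frac{p_j}{k_{n,j}}=S_n^{-1}\sum_{j\in\Omega}\tau_{\omega j}(y,t+1)$ on the set $\{T_\omega(y)\in I_n\}$ and shifting the word index exactly as in \eqref{q:tonextt} turns the inner double sum into $S_n^{-1}\KI_n(y)$, and with $D_n=S_n^{-1}\sum_{j\in\Omega}\frac{p_j}{k_{n,j}}d_{n,j}$ this is precisely $y=-\sum_{n=1}^N D_n\KI_n(y)$. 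The main obstacle I anticipate is not conceptual but one of justifying the manipulations: every interchange of the sums over $t$ and over $\Omega^t$, the passage $n\to\infty$ in the remainder, and the reindexing $\omega\mapsto\omega j$ must be legitimised, and this is exactly where \eqref{q:convergence} (which gives $\sum_{t}\sum_{\omega\in\Omega^t}|\delta_\omega(y,t)|<\infty$) and \eqref{q:finiteD} (which controls the $d$-factors) do the work.
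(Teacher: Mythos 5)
Your proposal is correct and follows essentially the same route as the paper: the first identity from \eqref{q:kaandb} and $K_n=S_n^{-1}-1$, and the second by iterating the affine relation with the bookkeeping terms $\theta_\omega$, killing the remainder via \eqref{q:convergence}, and collapsing the resulting series with the $S_n$-insertion and index shift of \eqref{q:tonextt}. No gaps to report.
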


For the invariant densities, we need to keep track of the orbits of the limits from the left and from the right of each partition point. Set, for $1 \leq i \leq N-1$ and $j \in \Omega$,
$$ a_{i,j} :=k_{i,j} z_i + d_{i,j} =  \lim_{x \uparrow z_i} T_j(x), \qquad \text{and} \qquad b_{i,j}:=k_{i+1,j} z_i + d_{i+1,j} =  \lim_{x \downarrow z_i} T_j(x).
$$
See also Figure~\ref{f:extendedmap}.
\begin{defn}\label{M}
The $N \times (N-1)$-matrix $M = (\mu_{n,i})$ given by
 $$\mu_{n,i}:= \left \{
\begin{aligned}
 & \sum_{j\in \Omega} \bigg[ \frac{p_j}{k_{i,j}}+\frac{p_j}{k_{i,j}} \KI_n(a_{i,j})- \frac{p_j}{k_{i+1,j}} \KI_n(b_{i,j}) \bigg], &\mbox{for } n = i, \cr
&\sum_{j\in \Omega} \bigg[ \frac{p_j}{k_{i,j}}\KI_n(a_{i,j})-\frac{p_j}{k_{i+1,j}} - \frac{p_j}{k_{i+1,j}} \KI_n(b_{i,j}) \bigg], &\mbox{for } n = i+1, \cr
&\sum_{j\in \Omega} \bigg[ \frac{p_j}{k_{i,j}}\KI_n(a_{i,j})-\frac{p_j}{k_{i+1,j}}\KI_n(b_{i,j}) \bigg], &\mbox{else, } 
 \end{aligned}\right.
$$
is called the {\em fundamental matrix} of the random piecewise linear system $T$.
\end{defn}

Note that assumption (A2) together with the fact that $|\KI_n(y)|<\infty$ for all $y \in [0,1]$ implies that all entries of $M$ are finite. In the next section we associate invariant functions $h_\gamma$ to vectors $\gamma \in \mathbb R^{N-1}$ in the null space of $M $. Here we prove that the null space of $M$ is non-trivial.

\begin{lem}\label{sol}
The system $M \gamma =0$ admits at least one non-trivial solution.
\end{lem}

\begin{proof}
Since $M$ has dimension $N \times (N-1)$, by the Rouch\'e-Capelli Theorem the associated homogeneous system admits a non-trivial solution if and only if the rank of $M$ is at most $N-2$. Below we will give non-trivial linear dependence relations between all combinations of $N-1$ out of $N$ rows. It follows that any minor of order $N-1$ of $M$ is zero and thus that the rank of $M$ is at most $N-2$. We first show that for every $1 \le i  \le N-1$, 
\begin{equation}\label{kndn0}
\sum_{n=1}^N K_n \mu_{n,i} =0 \qquad\text{and} \qquad \sum_{n=1}^N D_n \mu_{n,i} =0.
\end{equation}
Indeed by Lemma~\ref{rel},
\begin{equation}\nonumber
\begin{split}
\sum_{n=1}^N K_n \mu_{n,i} &= \sum_{j\in \Omega} \bigg[ \frac{p_j}{k_{i,j}} K_i - \frac{p_j}{k_{i+1,j}} K_{i+1} + \frac{p_j}{k_{i,j}} \sum_{n=1}^N K_n \KI_n(a_{i,j}) -\frac{p_j}{k_{i+1,j}} \sum_{n=1}^N K_n \KI_n(b_{i,j})  \bigg]
\\&= S_i(S^{-1}_i-1) -S_{i+1}(S_{i+1}^{-1}-1) + S_i - S_{i+1} =0.
\end{split}
\end{equation}
On the other hand, by the definition of the points $a_{i,j}$ and $b_{i,j}$,
\begin{equation}\nonumber 
\begin{split}
\sum_{n=1}^N D_n \mu_{n,i} &= \sum_{j\in \Omega} \bigg[ \frac{p_j}{k_{i,j}} D_i - \frac{p_j}{k_{i+1,j}} D_{i+1} + \frac{p_j}{k_{i,j}} \sum_{n=1}^N D_n \KI_n(a_{i,j}) -\frac{p_j}{k_{i+1,j}} \sum_{n=1}^N D_n \KI_n(b_{i,j})  \bigg]
\\&= \sum_{j\in \Omega} \bigg( S_i S_i^{-1} \frac{p_j}{k_{i,j}} d_{i,j} - S_{i+1}S_{i+1}^{-1} \frac{p_j}{k_{i+1,j}} d_{i+1,j}  -  \frac{p_j}{k_{i,j}}a_{i,j} + \frac{p_j}{k_{i+1,j}}b_{i,j} \bigg)=0. 
\end{split}
\end{equation}
Consequently, for every $1 \leq l \leq N$ and every $1 \le i \le N-1$, 
\[\sum_{n=1, n \neq l}^N (D_lK_n - D_nK_l) \mu_{n,i}=0.\]
By assumption (A3) this gives non-trivial linear dependence relations between all combinations of $N-1$ out of $N$ rows, giving the result.
\end{proof}

\begin{nrem}
Note that if $S_n=0$ for some $1 \le n \le N$, then the quantities $K_n$ and $D_n$ are not well defined. In this case $\mu_{n,i} = \sum_{j \in \Omega} \frac{p_j}{k_{i,j}} \KI_n(a_{i,j}) - \frac{p_j}{k_{i+1,j}}\KI_n(b_{i,j})$ for each $1 \le i \le N-1$ and by the definition of $\KI_n$ we can write for any $y \in [0,1]$ that
\[ \begin{split}
\KI_n(y) =\ & \sum_{t \ge 1} \sum_{\omega \in \Omega^{t-1}} \sum_{j \in \Omega} \delta_{\omega} (y, t-1) \frac{p_j}{k_{n,j}} 1_{I_n} (T_{\omega_1^{t-1}}(y))\\
=\ & \sum_{t \ge 1} \sum_{\omega \in \Omega^{t-1}} \delta_{\omega} (y, t-1) 1_{I_n} (T_{\omega_1^{t-1}}(y)) S_n =0.
\end{split}\]
Hence, $\mu_{n,i}=0$ for each $i$. From this, it is clear that if $S_n=0$ for at least two indices $n$, then a non-trivial vector $\gamma$ such that $M \gamma =0$ still exists. If there is a unique $\ell$ with $S_\ell=0$, then to obtain a non-trivial solution one still needs to find suitable constants $c_n$ such that $\sum_{n=1, n \neq \ell}^N c_n \mu_{n,i}=0$ for each $i$.
\end{nrem}

Any vector $\gamma$ from the null space of $M$ satisfies the following orthogonal relations, linking $\gamma$ to the functions $\KA_i$ and $\KB_i$.
\begin{lem}\label{l:orth}
For all $1 \le i \le N-1$ we have the following orthogonal relations:
$$\gamma_i + \sum_{m=1}^{N-1} \gamma_m  \sum_{j \in \Omega}   \bigg[ \frac{p_j}{k_{m,j}} \KA_i (a_{m,j}) - \frac{p_j}{k_{m+1,j}} \KA_i (b_{m,j}) \bigg]=0; $$
and
$$\gamma_i - \sum_{m=1}^{N-1} \gamma_m \sum_{j \in \Omega}  \bigg[  \frac{p_j}{k_{m,j}} \KB_i (a_{m,j}) - \frac{p_j}{k_{m+1,j}} \KB_i (b_{m,j}) \bigg]=0.$$
\end{lem}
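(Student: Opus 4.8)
The plan is to derive both identities purely from the hypothesis $M\gamma=0$ — that is, $\sum_{m=1}^{N-1}\mu_{n,m}\gamma_m=0$ for every $1\le n\le N$ — together with Lemma~\ref{l:KAKB}, which rewrites $\KA_i$ and $\KB_i$ as $S_n^{-1}$-weighted sums of the $\KI_n$.

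First I would isolate the quantity that appears both inside the matrix entries and inside the relations to be proved: for $1\le n\le N$ and $1\le m\le N-1$ set
\[
E_{n,m}:=\sum_{j\in\Omega}\Big[\frac{p_j}{k_{m,j}}\KI_n(a_{m,j})-\frac{p_j}{k_{m+1,j}}\KI_n(b_{m,j})\Big].
\]
Reading off Definition~\ref{M} case by case gives $\mu_{n,m}=E_{n,m}+S_m\mathbf{1}_{\{n=m\}}-S_{m+1}\mathbf{1}_{\{n=m+1\}}$, where $S_m=\sum_{j\in\Omega}p_j/k_{m,j}$. Hence $E_{n,m}=\mu_{n,m}-S_m\mathbf{1}_{\{n=m\}}+S_{m+1}\mathbf{1}_{\{n=m+1\}}$; multiplying by $\gamma_m$, summing over $1\le m\le N-1$, and using $M\gamma=0$ to kill the $\mu_{n,m}$ terms, I obtain
\[
\sum_{m=1}^{N-1}\gamma_m E_{n,m}=-\gamma_n S_n\mathbf{1}_{\{1\le n\le N-1\}}+\gamma_{n-1}S_n\mathbf{1}_{\{2\le n\le N\}},
\]
the two index restrictions arising because the column index $m$ runs only over $1,\dots,N-1$ (so the term $m=n$ survives only for $n\le N-1$ and the term $m=n-1$ only for $n\ge 2$).

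Next I would substitute Lemma~\ref{l:KAKB}. Since $\KA_i(y)=\sum_{n=1}^i S_n^{-1}\KI_n(y)$, the second summand in the first orthogonality relation equals $\sum_{m=1}^{N-1}\gamma_m\sum_{n=1}^i S_n^{-1}E_{n,m}=\sum_{n=1}^i S_n^{-1}\sum_{m=1}^{N-1}\gamma_m E_{n,m}$, and plugging in the displayed identity the factors $S_n^{-1}$ and $S_n$ cancel. Because $i\le N-1$, the indicator $\mathbf{1}_{\{1\le n\le N-1\}}$ equals $1$ throughout $1\le n\le i$, so the remaining sum telescopes: $\sum_{n=1}^i(-\gamma_n)+\sum_{n=2}^i\gamma_{n-1}=-\gamma_i$. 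This gives $\gamma_i+(-\gamma_i)=0$, the first relation. The second is handled identically with $\KB_i(y)=\sum_{n=i+1}^N S_n^{-1}\KI_n(y)$: now $\mathbf{1}_{\{2\le n\le N\}}$ equals $1$ throughout $i+1\le n\le N$, the telescoping sum collapses to $\gamma_i$, and one gets $\gamma_i-\gamma_i=0$. (Alternatively, once the first relation is in hand, the second follows by adding the two: by Lemma~\ref{l:KAKB}, $\KA_i+\KB_i=\sum_{n=1}^N S_n^{-1}\KI_n$ is independent of $i$, and the associated sum $\sum_{n=1}^N(-\gamma_n\mathbf{1}_{\{n\le N-1\}}+\gamma_{n-1}\mathbf{1}_{\{n\ge 2\}})$ telescopes to $0$.)

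The only genuine difficulty is bookkeeping: matching the column index $m$ with the shifted row index $n-1$, and tracking when the two boundary indicators ($n=1$ and $n=N$) switch off, so that the telescoping sums collapse exactly as claimed. No new analytic input is required — finiteness of all the series involved has already been secured by \eqref{q:convergence}.
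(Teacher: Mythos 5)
Your argument is correct and follows essentially the same route as the paper: decompose $\mu_{n,m}$ into the $\KI$-part plus the diagonal/subdiagonal corrections $S_m$ and $-S_{m+1}$, use $M\gamma=0$ row by row, and telescope after applying Lemma~\ref{l:KAKB}. The only (harmless) difference is that for the $\KB_i$ relation you telescope directly over the rows $n=i+1,\dots,N$, whereas the paper obtains it from the sum over all $N$ rows together with \eqref{q:1plus} and the already-established $\KA_i$ relation --- exactly the variant you mention in your parenthetical remark.
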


\begin{proof}
If $\gamma$ is a solution of the system $M\gamma=0$, then $\displaystyle \sum_{m=1}^{N-1} \gamma_m \mu_{n,m}=0$ for all $n$. Lemma \ref{l:KAKB} gives for $n=1$,
\[\begin{split}
0= S_1^{-1} \sum_{m=1}^{N-1} \gamma_m \mu_{1,m} &= S_1^{-1} \gamma_1 \sum_{j \in \Omega} \frac{p_j}{k_{1,j}} + S_1^{-1} \sum_{m=1}^{N-1} \gamma_m \sum_{j \in \Omega} \bigg( \frac{p_j}{k_{m,j}} \KI_1(a_{m,j}) -\frac{p_j}{k_{m+1,j}} \KI_1(b_{m,j})
\bigg)\\
&=\gamma_1 + \sum_{m=1}^{N-1} \gamma_m \sum_{j \in \Omega} \bigg( \frac{p_j}{k_{m,j}} \KA_1(a_{m,j}) - \frac{p_j}{k_{m+1,j}} \KA_1(b_{m,j}) \bigg) .
\end{split}\]
For $2 \le n \le N-1$ we obtain similarly
\begin{equation}\label{q:nseparate} \begin{split}
0= S_n^{-1} \sum_{m=1}^{N-1} \gamma_m \mu_{n,m} = & \ S_n^{-1} \sum_{m=1}^{N-1} \gamma_m  \sum_{j \in \Omega} \bigg( \frac{p_j}{k_{m,j}} \KI_n(a_{m,j}) - \frac{p_j}{k_{m+1,j}} \KI_n(b_{m,j})
 \bigg)\\
 & + S_n^{-1} \bigg( \gamma_n \sum_{j \in \Omega} \frac{p_j}{k_{n,j}} - \gamma_{n-1} \sum_{j \in \Omega} \frac{p_j}{k_{n,j}} \bigg)\\
 =& \ S_n^{-1} \sum_{m=1}^{N-1} \gamma_m  \sum_{j \in \Omega} \bigg( \frac{p_j}{k_{m,j}} \KI_n(a_{m,j}) - \frac{p_j}{k_{m+1,j}} \KI_n(b_{m,j})
 \bigg) + \gamma_n - \gamma_{n-1}.
\end{split}\end{equation}
Then summing over all $1 \le n \le i$ and using \eqref{q:nseparate} and Lemma \ref{l:KAKB} gives
\[\begin{split}
0= \sum_{n=1}^i S_n^{-1} \sum_{m=1}^{N-1} \gamma_m \mu_{n,m} = & \, \gamma_i + \sum_{n=1}^i S_n^{-1} \sum_{m=1}^{N-1} \gamma_m \sum_{j \in \Omega} \bigg( \frac{p_j}{k_{m,j}} \KI_n (a_{m,j}) - \frac{p_j}{k_{m+1,j}} \KI_n (b_{m,j}) \bigg)\\
=& \, \gamma_i + \sum_{m=1}^{N-1} \gamma_m \sum_{j \in \Omega} \bigg( \frac{p_j}{k_{m,j}} \KA_i(a_{m,j}) - \frac{p_j}{k_{m+1,j}} \KA_i (b_{m,j}) \bigg).
\end{split}\]
This gives the relations for $\KA_i$.

\vskip .1cm
From $\displaystyle \sum_{m=1}^{N-1} \gamma_m \mu_{n,m}=0$ for all $n$ it also follows that $\displaystyle \sum_{m=1}^{N-1} \gamma_m \sum_{n=1}^N \mu_{n,m}=0$. From this we obtain that
\[ \sum_{m=1}^{N-1} \gamma_m \sum_{j \in \Omega} \frac{p_j}{k_{m,j}} \bigg( 1 + \sum_{n=1}^N \KI_n (a_{m,j}) \bigg)= \sum_{m=1}^{N-1} \gamma_m \sum_{j \in \Omega} \frac{p_j}{k_{m+1,j}} \bigg( 1 + \sum_{n=1}^N \KI_n (b_{m,j}) \bigg).\]
Then \eqref{q:1plus} from the proof of Lemma~\ref{rel} gives that
\[ \sum_{m=1}^{N-1} \gamma_m \sum_{j \in \Omega} \frac{p_j}{k_{m,j}} \sum_{n=1}^N S_n^{-1} \KI_n(a_{m,j}) = 
\sum_{m=1}^{N-1} \gamma_m \sum_{j \in \Omega} \frac{p_j}{k_{m+1,j}} \sum_{n=1}^N S_n^{-1} \KI_n(b_{m,j}).\]
Hence, by Lemma~\ref{l:KAKB} we get for each $i$ that
\[ \sum_{m=1}^{N-1} \gamma_m \sum_{j \in \Omega} \frac{p_j}{k_{m,j}} ( \KA_i(a_{m,j}) + \KB_i(a_{m,j})) = 
\sum_{m=1}^{N-1} \gamma_m \sum_{j \in \Omega} \frac{p_j}{k_{m+1,j}} (\KA_i(b_{m,j}) + \KB_i(b_{m,j})).\]
This gives the orthogonal relations for $\KB_i$.
\end{proof}

In the proofs of our main results we only use the second part of Lemma~\ref{l:orth}, i.e., the orthogonal relations for $\KB_i$, but since we obtain the orthogonal relations for $\KA_i$ and $\KB_i$ more or less simultaneously, we have listed them both.

\section{Invariant densities for the random system $T$}

We now state our main result. For $y \in [0,1]$, define the $L^1(\lambda)$-function $L_y:[0,1]\to \mathbb R$ by
\begin{equation}\label{L}
 L_y(x)= \sum_{t \geq 0} \sum_{\omega \in  \Omega^{t}} \delta_{\omega}(y,t) \mathbf{1}_{[0,T_{\omega}(y))}(x).
 \end{equation}

\begin{thm}\label{thmm}
Let $T$ be a random piecewise linear system on the unit interval $[0,1]$ that satisfies the assumptions (A1) to (A5) from Section~\ref{s:preliminaries}. Let $M$ be the corresponding fundamental matrix and let $\gamma=(\gamma_1, \ldots ,\gamma_{N-1})^{\intercal}$ be a non-trivial solution of the system $M\gamma=0$. For each $1 \le m \le N-1$, define the function $h_m:[0,1] \to \mathbb R$ by
\begin{equation}\label{h}
h_m(x):= \sum_{\ell \in \Omega} \bigg[ \frac{p_\ell}{k_{m,\ell}} L_{a_{m,\ell}}(x)- \frac{p_\ell}{k_{m+1,\ell}} L_{b_{m,\ell}}(x) \bigg].
\end{equation}
Then a $T$-invariant function is given by
\begin{equation}\label{q:hgamma}
h_{\gamma}: [0,1] \to \mathbb R, x \mapsto \sum_{m=1}^{N-1} \gamma_m h_m(x)
\end{equation}
and $h_\gamma \neq 0$.
\end{thm}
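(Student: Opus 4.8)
The plan is to verify directly that $h_\gamma$ is a fixed point of the random Perron--Frobenius operator $P_T$, i.e.\ that $P_T h_\gamma = h_\gamma$ $\lambda$-almost everywhere. Since $P_T = \sum_{m=1}^{N-1}\gamma_m P_T h_m$ by linearity, it suffices to compute $P_T h_m$ for each $m$ and then recombine. The basic building block is the function $L_y$, so the first step is to understand how $P_{T_\ell}$ acts on $L_y$. Writing $L_y(x) = \sum_{t\ge 0}\sum_{\omega\in\Omega^t}\delta_\omega(y,t)\mathbf 1_{[0,T_\omega(y))}(x)$, I would use the elementary fact that for a linear branch $T_{i,\ell}(x) = k_{i,\ell}x + d_{i,\ell}$ on $I_i$ one has $P_{T_{i,\ell}}\mathbf 1_{[0,c)} = \frac{1}{|k_{i,\ell}|}\mathbf 1_{T_{i,\ell}([0,c)\cap I_i)}$, and then assemble these over the partition pieces. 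The key cancellation to exploit is that $\frac{p_\ell}{k_{i,\ell}}\mathbf 1_{[0,T_{i,\ell}(z)) }$-type terms telescope across adjacent intervals $I_i$, so that $\sum_{\ell}p_\ell P_{T_\ell} L_y$ reproduces $L_y$ up to boundary contributions coming from the partition points $z_1,\dots,z_{N-1}$ and from the endpoints $0,1$ (where assumption (A4) guarantees the orbit structure is clean). Concretely, the shift $T_\omega(y)\mapsto T_{\omega\ell}(y)$ together with $\tau_\ell(T_\omega(y),1)=\tau_{\omega\ell}(y,t+1)$ and $\delta_\ell(T_\omega(y),1)=\delta_{\omega\ell}(y,t+1)$ lets one re-index the double sum defining $L_y$ so that $P_T L_y$ equals $L_y$ plus a correction term supported on the characteristic functions $\mathbf 1_{[0,z_i)}$ (and $\mathbf 1_{[0,1)}$, $\mathbf 1_{[0,0)}=0$).

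Carrying this out, I expect to obtain an identity of the schematic form
\[
P_T L_y = L_y + \sum_{i=1}^{N-1} c_i(y)\,\mathbf 1_{[0,z_i)} + (\text{term at }1),
\]
where $c_i(y)$ is expressed through the quantities $\KI_n(y)$, and similarly the constant term at $1$ will vanish or be absorbed using (A4). Substituting this into $h_m = \sum_\ell\big[\frac{p_\ell}{k_{m,\ell}}L_{a_{m,\ell}} - \frac{p_\ell}{k_{m+1,\ell}}L_{b_{m,\ell}}\big]$ and then into $h_\gamma = \sum_m\gamma_m h_m$, the terms $L_{a_{m,\ell}}$ and $L_{b_{m,\ell}}$ reproduce $h_\gamma$ itself, and the leftover is a linear combination $\sum_{i=1}^{N-1}\big(\text{stuff}\big)\mathbf 1_{[0,z_i)}$ whose coefficients are precisely
\[
\sum_{m=1}^{N-1}\gamma_m\sum_{\ell\in\Omega}\Big[\frac{p_\ell}{k_{m,\ell}}\big(\cdots\KI_\cdot(a_{m,\ell})\cdots\big) - \frac{p_\ell}{k_{m+1,\ell}}\big(\cdots\KI_\cdot(b_{m,\ell})\cdots\big)\Big].
\]
This is where the fundamental matrix enters: the coefficient of $\mathbf 1_{[0,z_i)}$ is (up to harmless factors) exactly $\sum_{m=1}^{N-1}\gamma_m \mu_{n,m}$ summed appropriately, or more precisely a combination equal to the orthogonal relations of Lemma~\ref{l:orth}. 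Since $\gamma$ lies in the null space of $M$, Lemma~\ref{l:orth} (specifically the $\KB_i$ relations, which the paper flags as the ones actually needed) forces every such coefficient to vanish, so $P_T h_\gamma = h_\gamma$.

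The remaining point is that $h_\gamma \in L^1(\lambda)$: each $L_y$ is a countable sum of indicator functions with $\|L_y\|_1 \le \sum_{t\ge 0}\sum_{\omega\in\Omega^t}|\delta_\omega(y,t)| \le \frac{1}{1-\rho}$ by \eqref{q:convergence}, so $h_m$ and hence $h_\gamma$ are finite $L^1$-combinations and the operator $P_T$ may be applied termwise (the interchange of $P_T$ with the infinite sums is justified by this absolute summability, so dominated convergence applies). I would also record that $h_\gamma$ is a function of bounded variation, which is automatic since it is a countable sum of indicators with summable "jump" weights.

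The main obstacle I anticipate is the bookkeeping in the computation of $P_T L_y$: one must carefully track, for each branch $T_{i,\ell}$ and each threshold $T_\omega(y)$, whether $T_\omega(y)$ lies in the image $T_{i,\ell}(I_i)$, and correctly identify the boundary terms produced at the partition points $z_i$ (and at $0,1$ via (A4)) after the re-indexing $\omega\mapsto\omega\ell$. Getting the signs and the $\frac{p_\ell}{k_{i,\ell}}$ versus $\frac{p_\ell}{k_{i+1,\ell}}$ weights to line up so that the residual coefficients match the entries $\mu_{n,m}$ of the fundamental matrix exactly — rather than merely up to some rearrangement — is the delicate step; everything after that is a direct appeal to $M\gamma = 0$ via Lemma~\ref{l:orth}.
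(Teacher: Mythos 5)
Your overall strategy is the paper's own: verify $P_T h_\gamma = h_\gamma$ directly, compute the action of $P_T$ on the building blocks $L_y$, re-index $\omega \mapsto \omega\ell$ using $\tau_\ell(T_\omega(y),1)=\tau_{\omega\ell}(y,t+1)$, justify the interchanges by the absolute summability \eqref{q:convergence}, and close with the $\KB_i$ relations of Lemma~\ref{l:orth} coming from $M\gamma=0$. However, the intermediate identity you anticipate is wrong in a way that matters. Since each branch pushes $\mathbf 1_{[0,c)\cap I_i}$ forward to the indicator of an interval whose endpoints are among $b_{i-1,j}$, $a_{i,j}$ and $T_j(c)$, the residual jumps of $P_T L_y$ sit at the \emph{image} points $a_{i,j},b_{i,j}$ of the partition points and at $y$ itself, not at the $z_i$; together with constant terms this gives (a.e.)
\[
P_T L_y \;=\; \sum_{i=1}^{N-1}\big(E_i+\eta_i+F_i+\phi_i\big)\,\KB_i(y)\;+\;y\;+\;L_y\;-\;\mathbf 1_{[0,y)},
\]
where $E_i,F_i$ are step functions with jumps at the $a_{i,j},b_{i,j}$ and $\eta_i,\phi_i$ are constants. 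Producing the lone constant $y$ here is exactly where (A4) enters, via the identity $y=\sum_{t}\sum_\omega \delta_\omega(y,t)C(T_\omega(y))-\sum_i(\eta_i+\phi_i)\KB_i(y)$ (the paper's Lemma~\ref{l:yandC}); your plan has no counterpart of this step.

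This misidentification propagates to the final cancellation, which does not work as you describe. Lemma~\ref{l:orth} does not make the $\KB_i$-weighted coefficients vanish: it says $\sum_m\gamma_m\sum_\ell\big[\tfrac{p_\ell}{k_{m,\ell}}\KB_i(a_{m,\ell})-\tfrac{p_\ell}{k_{m+1,\ell}}\KB_i(b_{m,\ell})\big]=\gamma_i$. So after summing against $\gamma_m$, the $\KB_i$-part of the residual equals $\sum_i\gamma_i(E_i+\eta_i+F_i+\phi_i)$, and this must be cancelled against a \emph{second} family of residual terms, namely $-\sum_m\gamma_m(E_m+\eta_m+F_m+\phi_m)$, which arises from the $y-\mathbf 1_{[0,y)}$ contributions of each $L_{a_{m,\ell}}$ and $L_{b_{m,\ell}}$ in $h_m$ (using $E_m+\eta_m=\sum_j\tfrac{p_j}{k_{m,j}}(\mathbf 1_{[0,a_{m,j})}-a_{m,j})$ and $F_m+\phi_m=\sum_j\tfrac{p_j}{k_{m+1,j}}(b_{m,j}-\mathbf 1_{[0,b_{m,j})})$). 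In your schematic the $L$-terms are said to "reproduce $h_\gamma$ itself" and the leftover is a single family of coefficients of $\mathbf 1_{[0,z_i)}$ that should vanish outright; there is no term at $y$, no lone $\gamma_i$, and no mechanism pairing the two families, so the bookkeeping as proposed would not close. Repairing it amounts precisely to proving the paper's Lemmas~\ref{l:yandC} and~\ref{l:Ly}, which is the real content of the argument; the rest of your plan (injecting $M\gamma=0$ through the $\KB_i$ relations, and the $L^1$/bounded-variation remarks) is sound.
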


To show that $P_T h_\gamma = h_\gamma$ $\lambda$-a.e.~we have to determine for each $x \in [0,1]$ and each branch $T_{i,j}$, whether or not $x$ has an inverse image in the branch $T_{i,j}$. Let
\[ x_{i,j} := \frac{x-d_{i,j}}{k_{i,j}}\]
be the inverse of $x$ under the map $T_{i,j}: \mathbb R \to \mathbb R$. By the definitions in (\ref{h}) and (\ref{q:hgamma}), we have to show that
\begin{equation}\label{catena}
\begin{split}
h_\gamma (x) =\ & \sum_{j \in \Omega} \sum_{i=1}^N  \frac{p_j}{|k_{i,j}|} h_{\gamma}(x_{i,j}) \mathbf{1}_{I_i}(x_{i,j})\\
 =\ &   \sum_{j \in \Omega} \sum_{i=1}^N \frac{p_j}{|k_{i,j}|} \mathbf{1}_{I_i}(x_{i,j}) \sum_{m=1}^{N-1} \gamma_m  \sum_{\ell \in \Omega} \bigg( \frac{p_\ell}{k_{m,\ell}} L_{a_{m,\ell}}(x_{i,j})- \frac{p_\ell}{k_{m+1,\ell}} L_{b_{m,\ell}}(x_{i,j}) \bigg)  .
\end{split}
\end{equation}
The parts for $L_{a_{m, \ell}}$ and $L_{b_{m,\ell}}$ behave similarly. That is why we first study
\[ \sum_{j \in \Omega} \sum_{i=1}^N \frac{p_j}{|k_{i,j}|} {\mathbf 1}_{I_i} (x_{i,j}) L_y(x_{i,j})\]
for general $y \in [0,1]$ through several lemmas. We introduce some notation to manage the long expressions. For $1 \leq i \leq N-1$, let
$$\eta_i:= \sum_{j\in \Omega} \frac{p_j ( \mathbf{1}_{(0,  \infty)}(k_{i,j}) - a_{i,j}) }{k_{i,j}} \quad \text{and} \quad \phi_i:= \sum_{j \in \Omega} \frac{p_j ( -\mathbf{1}_{(- \infty, 0)}(k_{i+1,j}) + b_{i,j}) }{k_{i+1,j}}.$$
For $y \in [0,1]$ let $1 \le n \le N$ be the index such that $y \in I_n$ and set
\begin{equation}\label{q:Cy}
C(y):=\sum_{j \in \Omega} \bigg( \sum_{i=1}^{n-1} \frac{p_j}{|k_{i,j}|} + \frac{p_j}{|k_{n,j}|}\mathbf{1}_{(-\infty ,0)}(k_{n,j}) \bigg).
\end{equation}

\begin{lem}\label{l:yandC}
Let $y \in [0,1]$. Then
\[
y = \sum_{t \ge 0} \sum_{\omega \in \Omega^{t}} \delta_{\omega}(y,t) C (T_{\omega}(y)) - \sum_{i =1}^{N-1}(\eta_i + \phi_i) \KB_i(y).
\]
\end{lem}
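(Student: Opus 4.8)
The plan is to prove Lemma~\ref{l:yandC} by starting from the identity for $y$ established in the proof of Lemma~\ref{rel}, namely $y = -\sum_{n=1}^N D_n \KI_n(y)$, or rather from its unexpanded form $y = \sum_{t \ge 0} \sum_{\omega \in \Omega^{t+1}} \delta_\omega(y,t)\theta_\omega(y,t+1)$ (equivalently $y = \sum_{\omega \in \Omega}\tau_\omega(y,1)T_\omega(y) + \theta_\omega(y,1)$ iterated). The key observation is that the quantity $C(y)$ appearing here is, for $y \in I_n$, essentially the "base point" contribution coming from the affine structure of the branches: it collects $\sum_j \frac{p_j}{|k_{i,j}|}$ over the intervals $I_i$ lying entirely below $y$, plus a correction on $I_n$ itself depending on the sign of the slope. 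So the first step is to rewrite $C(T_\omega(y))$ in terms of the partition: if $T_\omega(y) \in I_n$, then $C(T_\omega(y)) = \sum_{i=1}^{n-1}\sum_j \frac{p_j}{|k_{i,j}|} + \sum_j \frac{p_j}{|k_{n,j}|}\mathbf{1}_{(-\infty,0)}(k_{n,j})$, which I want to express as a sum over $i$ of indicator-weighted terms of the form $\mathbf{1}_{A_i \text{ or } B_i}(T_\omega(y))$.

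The heart of the computation is an algebraic identity relating, for a fixed point $z \in I_n$, the three quantities: (i) $C(z)$, (ii) a "$z$ itself" term, and (iii) the telescoping contributions $\eta_i, \phi_i$ summed against the indicators $\mathbf{1}_{B_i}(z)$. Concretely, I expect that for each $z \in [0,1]$,
\[
\sum_{j \in \Omega}\Big(\tau_j(z,1)\,z_j^{\text{-ish}}\Big) \;=\; C(z) \;-\; \text{(something)} \;-\; \sum_{i=1}^{N-1}(\eta_i+\phi_i)\mathbf{1}_{B_i}(z),
\]
where the left side is precisely the $\theta$-term contribution at one step. The point is that $a_{i,j}, b_{i,j}$ are the one-sided limits $k_{i,j}z_i + d_{i,j}$ and $k_{i+1,j}z_i + d_{i+1,j}$, so the definitions of $\eta_i$ and $\phi_i$ are tailored so that $\sum_i (\eta_i+\phi_i)\mathbf{1}_{B_i}(z)$ telescopes, over the partition intervals $I_n$ with $n > i$, to exactly reconstruct $C(z)$ minus the "$\sum_j p_j d_{i,j}/k_{i,j}$"-type term that equals the single-step $\theta$ contribution. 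So the second step is: fix $z$, suppose $z \in I_n$, and verify directly that
\[
-\sum_{j\in\Omega}\frac{p_j}{k_{n,j}}d_{n,j} \;=\; C(z) - \sum_{i=1}^{N-1}(\eta_i+\phi_i)\mathbf{1}_{B_i}(z) \;-\; \Big(\text{a one-step-forward piece absorbed by }\tau\Big).
\]
This reduces to matching sign cases $k_{n,j} > 0$ versus $k_{n,j} < 0$ and using $a_{i,j}, b_{i,j}$ to telescope; it is the routine-but-delicate bookkeeping step and the one place where assumption (A4) (pinning the orbits of $0$ and $1$) must be invoked so the boundary terms of the telescoping vanish.

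Once this pointwise identity is in hand, the third step is to substitute $z = T_\omega(y)$, multiply by $\delta_\omega(y,t)$, and sum over all $\omega \in \Omega^t$ and $t \ge 0$. The one-step-forward pieces reassemble exactly as in the proof of Lemma~\ref{rel} (using $\tau_j(T_\omega(y),1) = \tau_{\omega j}(y,t+1)$ and $\delta_j(T_\omega(y),1)=\delta_{\omega j}(y,t+1)$) and, together with the $\theta$-terms, telescope to give $y$ on one side via \eqref{q:yiterated}; the $C(T_\omega(y))$ terms give $\sum_{t\ge0}\sum_\omega \delta_\omega(y,t)C(T_\omega(y))$; and the $\sum_i(\eta_i+\phi_i)\mathbf{1}_{B_i}(T_\omega(y))$ terms sum to $\sum_i(\eta_i+\phi_i)\KB_i(y)$ by the definition \eqref{KB} of $\KB_i$. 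Rearranging yields the claimed formula. The convergence of all these rearrangements is justified throughout by \eqref{q:convergence} and \eqref{q:finiteD}, exactly as in Lemma~\ref{rel}. I expect the main obstacle to be the sign-case analysis in the pointwise identity of step two: getting the $\mathbf{1}_{(0,\infty)}(k_{i,j})$ and $-\mathbf{1}_{(-\infty,0)}(k_{i+1,j})$ corrections in $\eta_i$ and $\phi_i$ to line up correctly with whether a branch maps $I_i$ increasingly or decreasingly, and confirming that the telescoping endpoints at $0$ and $1$ cancel by (A4).
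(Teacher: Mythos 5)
Your plan follows the paper's proof essentially verbatim: the paper establishes exactly your pointwise identity --- for $z \in I_n$, $C(z) - \sum_{i=1}^{N-1}(\eta_i+\phi_i)\mathbf{1}_{B_i}(z) = -\sum_{j\in\Omega}\frac{p_j}{k_{n,j}}d_{n,j} = \sum_{j\in\Omega}\theta_j(z,1)$, with (A4) killing the boundary terms of the telescoping over the partition --- and then evaluates it at $z=T_\omega(y)$, weights by $\delta_\omega(y,t)$, sums over $t$ and $\omega$, and concludes via the first line of \eqref{q:yiterated} and the definition \eqref{KB} of $\KB_i$. The only cosmetic discrepancy is that no extra ``one-step-forward piece'' needs to be subtracted in that pointwise identity (the forward terms enter only through \eqref{q:yiterated}), so your outline is correct and matches the paper's argument.
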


\begin{proof}
Let $y \in [0,1]$ be given and recall the definition of $\theta_\omega(z,t)$ from \eqref{q:theta}. If $y \in I_n$, then
\[ \begin{split}
C (y) - &\ \sum_{i=1}^{N-1} (\eta_i + \phi_i)\mathbf 1_{B_i}(y)\\
=& \  \sum_{j \in \Omega} \frac{p_j}{|k_{n,j}|} \mathbf{1}_{(-\infty,0)}(k_{n,j}) \\
& + \sum_{j \in \Omega}  \sum_{i=1}^{n-1} \bigg( \frac{p_j}{|k_{i,j}|} - \frac{p_j(\mathbf{1}_{(0, \infty)}(k_{i,j})-a_{i,j})}{k_{i,j}} - \frac{p_j(-\mathbf{1}_{(-\infty,0)}(k_{i+1,j})+b_{i,j})}{k_{i+1,j}} \bigg)\\
=&  \sum_{j \in \Omega} \bigg( -\frac{p_j}{k_{n,j}} b_{n-1,j} + \frac{p_j}{|k_{1,j}|} - \frac{p_j}{k_{1,j}} \mathbf{1}_{(0, \infty)}(k_{1,j}) + \frac{p_j}{k_{1,j}}a_{1,j} + \sum_{i=2}^{n-1} \frac{p_j}{k_{i,j}} (a_{i,j}-b_{i-1,j}) \bigg)\\
=& -\sum_{j \in \Omega} \frac{p_j}{k_{n,j}} d_{n,j} = \sum_{j \in \Omega} \theta_j(y,1), 
\end{split}\]
where we have used the assumptions from (A4) in the second to last step. So, for any $t \ge 0$ and $\omega \in \Omega^t$ we get that
\begin{equation}\label{q:comega}
C(T_\omega(y)) - \sum_{i=1}^{N-1} (\eta_i + \phi_i)\mathbf 1_{B_i}(T_\omega(y)) = \sum_{j\in \Omega} \theta_{\omega j} (y, t+1),
\end{equation}
where $\omega j$ denotes the concatenation of $\omega$ with $j \in \Omega$. Recall from the first line of \eqref{q:yiterated} that
\[ y= \sum_{t \ge 0} \sum_{\omega \in \Omega^t} \delta_{\omega}(y,t) \sum_{j \in \Omega}\theta_{\omega j} (y,t+1).\]
Combining this with \eqref{q:comega} and the definition of $\KB_i$ from \eqref{KB} then gives the result.
\end{proof}

For each $1 \leq i \leq N-1$, define the functions $E_i, F_i :[0,1] \to \mathbb R$ by
\[ \begin{split}
E_i(x):=& \sum_{j \in \Omega} \frac{p_j}{k_{i,j}} \bigg( -\mathbf{1}_{[a_{i,j},1]}(x) \mathbf{1}_{(0, \infty)}(k_{i,j})+ \mathbf{1}_{[0,a_{i,j})}(x) \mathbf{1}_{(- \infty, 0)}(k_{i,j}) \bigg),\\
F_i(x):=& \sum_{j \in \Omega}  \frac{p_j}{k_{i+1,j}} \bigg( -\mathbf{1}_{[0,b_{i,j})}(x) \mathbf{1}_{(0, \infty)}(k_{i+1,j})+ \mathbf{1}_{[b_{i,j},1]}(x) \mathbf{1}_{(- \infty, 0)}(k_{i+1,j}) \bigg)
\end{split}\]
and let $E_N,F_0: [0,1] \rightarrow \mathbb{R}$ be the zero functions. Then for each $1 \le i \le N$, we have that for Lebesgue almost every $x \in [0,1]$,
\[ E_i(x) + F_{i-1}(x) = \sum_{j \in \Omega} \frac{p_j}{|k_{i,j}|} ( \mathbf{1}_{I_i}(x_{i,j}) -1),\]
where we have used (A4) for $i=1,N$. In fact, equality holds for all but countably many points.

\begin{lem}\label{l:Ly}
For $y \in [0,1]$ we have that for Lebesgue almost every $x \in [0,1]$,
\[ \sum_{j \in \Omega} \sum_{i=1}^N \frac{p_j}{|k_{i,j}|} \mathbf{1}_{I_i}(x_{i,j}) L_y(x_{i,j}) =  \sum_{i=1}^{N-1} (E_i(x) + \eta_i + F_i(x) + \phi_i)\KB_i(y) + y + L_y(x) - {\mathbf 1}_{[0,y)}(x).\]
\end{lem}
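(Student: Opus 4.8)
The plan is to unwind the definition of $L_y$, swap the order of summation so that the branch-sum over $(i,j)$ acts on the indicator $\mathbf 1_{[0,T_\omega(y))}$ rather than on $L_y$ directly, and then recognize the resulting telescoping structure using the algebraic identities established for $\KB_i$ and the functions $E_i, F_i$. Concretely, since
\[ \sum_{j \in \Omega} \sum_{i=1}^N \frac{p_j}{|k_{i,j}|} \mathbf{1}_{I_i}(x_{i,j}) L_y(x_{i,j}) = \sum_{t\ge 0}\sum_{\omega\in\Omega^t}\delta_\omega(y,t) \sum_{j\in\Omega}\sum_{i=1}^N \frac{p_j}{|k_{i,j}|}\mathbf 1_{I_i}(x_{i,j})\mathbf 1_{[0,T_\omega(y))}(x_{i,j}), \]
the first step is to compute, for a fixed ``target level'' $z\in[0,1]$ (which will be $z=T_\omega(y)$), the quantity $G_z(x):=\sum_{j\in\Omega}\sum_{i=1}^N \frac{p_j}{|k_{i,j}|}\mathbf 1_{I_i}(x_{i,j})\mathbf 1_{[0,z)}(x_{i,j})$. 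For each branch $T_{i,j}$ one checks whether the preimage $x_{i,j}$ lies in $I_i$ \emph{and} below $z$; because each $T_{i,j}$ is an affine bijection onto its image, this is a condition on $x$ alone (comparing $x$ with $T_{i,j}(z)$, $a_{i,j}$, $b_{i,j}$), and it splits into cases according to the sign of $k_{i,j}$. I expect $G_z(x)$ to come out as $z + \mathbf 1_{[0,z)}(x)\cdot(\text{correction}) + \sum_i (E_i(x)+F_i(x))\mathbf 1_{B_i}(z) + (\text{more})$ — in fact the natural guess, matching the two $y$-dependent terms on the right-hand side, is
\[ G_z(x) = \Big(\sum_{i=1}^{N-1}(E_i(x)+F_i(x))\mathbf 1_{B_i}(z)\Big) + C(z) - \mathbf 1_{[0,z)}(x), \]
where $C$ is the constant defined in \eqref{q:Cy}: the ``$C(z)$'' collects the branches for which the preimage-in-$I_i$ condition is automatically satisfied, while the $E_i,F_i$ terms record exactly where $T_{i,j}(z)$ falls relative to the endpoints $a_{i,j},b_{i,j}$.

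\textbf{Next I would} substitute this pointwise identity back with $z=T_\omega(y)$ and sum against $\delta_\omega(y,t)$ over all $t\ge 0$ and $\omega\in\Omega^t$. The term $-\mathbf 1_{[0,T_\omega(y))}(x)$ reassembles (with the weights $\delta_\omega(y,t)$) into exactly $-L_y(x)$ by definition. The term $\sum_i(E_i(x)+F_i(x))\mathbf 1_{B_i}(T_\omega(y))$ summed against $\delta_\omega(y,t)$ becomes $\sum_i(E_i(x)+F_i(x))\KB_i(y)$ by the definition of $\KB_i$ in \eqref{KB}. The remaining term $\sum_{t,\omega}\delta_\omega(y,t)C(T_\omega(y))$ is precisely the left-hand side of Lemma~\ref{l:yandC}, which rewrites it as $y + \sum_{i=1}^{N-1}(\eta_i+\phi_i)\KB_i(y)$. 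Collecting the pieces gives
\[ \sum_{j\in\Omega}\sum_{i=1}^N \frac{p_j}{|k_{i,j}|}\mathbf 1_{I_i}(x_{i,j})L_y(x_{i,j}) = \sum_{i=1}^{N-1}(E_i(x)+F_i(x))\KB_i(y) + y + \sum_{i=1}^{N-1}(\eta_i+\phi_i)\KB_i(y) - L_y(x), \]
which is the claimed formula after regrouping the $\KB_i$ coefficients as $E_i(x)+\eta_i+F_i(x)+\phi_i$ — wait, the sign on $L_y(x)$ and the extra $-\mathbf 1_{[0,y)}(x)$ term in the statement signal that I have the bookkeeping of the ``$t=0$'' contribution and of one boundary indicator slightly off, so I would track the $t=0$ term ($\omega=\varepsilon$, $T_\varepsilon(y)=y$) separately: it contributes $G_y(x)$ with no $\delta$-weight folded elsewhere, and the mismatch between $L_y$ and ``$L_y$ minus its leading term'' produces exactly the stray $+L_y(x)-\mathbf 1_{[0,y)}(x)$.

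\textbf{The main obstacle} is the case analysis in computing $G_z(x)$: one must carefully handle the four sign patterns of $k_{i,j}$ (increasing vs.\ decreasing branches) crossed with whether the target $z$ or the point $x$ lands left or right of the relevant images $a_{i,j},b_{i,j}$, and — crucially — verify that the telescoping across consecutive intervals $I_{i}, I_{i+1}$ works, i.e.\ that the $F_{i-1}$ contribution from branch $i-1$ combines with the $E_i$ contribution from branch $i$ exactly as recorded in the displayed identity $E_i(x)+F_{i-1}(x)=\sum_j \frac{p_j}{|k_{i,j}|}(\mathbf 1_{I_i}(x_{i,j})-1)$ stated just before the lemma, with the endpoints $0$ and $1$ handled via (A4) (this is why equality is only claimed Lebesgue-a.e.: the finitely many points $x$ equal to some $a_{i,j}$ or $b_{i,j}$ or $T_{i,j}(z)$ are exceptional). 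Everything else is a mechanical reassembly once $G_z(x)$ is known; the algebra is bookkeeping-heavy but each step is forced by the definitions of $E_i,F_i,\eta_i,\phi_i,C$ together with Lemma~\ref{l:yandC}.
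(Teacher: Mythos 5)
Your overall strategy is the same as the paper's (Fubini to move the branch sum onto the indicators $\mathbf 1_{I_i\cap[0,T_\omega(y))}(x_{i,j})$, then a pointwise identity in $z=T_\omega(y)$, then reassembly via the definition of $\KB_i$ and Lemma~\ref{l:yandC}), but the pointwise identity you guess for $G_z(x)$ is wrong, and this is not a bookkeeping slip that can be repaired by re-tracking the $t=0$ term. The correct identity (which is what the paper proves, combining its display \eqref{q:0y} with the definition of $C$) is, for $z\in I_n$ and Lebesgue-a.e.\ $x$,
\[
\sum_{j\in\Omega}\sum_{i=1}^N \frac{p_j}{|k_{i,j}|}\,\mathbf 1_{I_i\cap[0,z)}(x_{i,j})
=\sum_{i=1}^{N-1}\bigl(E_i(x)+F_i(x)\bigr)\mathbf 1_{B_i}(z)+C(z)+\sum_{j\in\Omega}\tau_j(z,1)\,\mathbf 1_{[0,T_j(z))}(x),
\]
i.e.\ the $x$-dependent remainder is the \emph{signed-weighted indicator at the image level} $T_j(z)$, not $-\mathbf 1_{[0,z)}(x)$. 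A one-branch sanity check already rules out your version: for the doubling map and $z\le\frac12$ the left-hand side is $\tfrac12\mathbf 1_{[0,2z)}(x)$, which matches $\sum_j\tau_j(z,1)\mathbf 1_{[0,T_j(z))}(x)$ but is certainly not $-\mathbf 1_{[0,z)}(x)$ (it is not even negative). The term $\sum_j\tau_j(z,1)\mathbf 1_{[0,T_j(z))}(x)$ is exactly what makes the reassembly work: since $\delta_\omega(y,t)\,\tau_{\omega j}(y,t+1)=\delta_{\omega j}(y,t+1)$, summing it against $\delta_\omega(y,t)$ shifts the index $t\mapsto t+1$ and produces $\sum_{t\ge1}\sum_{\omega\in\Omega^t}\delta_\omega(y,t)\mathbf 1_{[0,T_\omega(y))}(x)=L_y(x)-\mathbf 1_{[0,y)}(x)$ with a \emph{plus} sign, which is precisely the tail appearing in the lemma.

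Consequently the discrepancy you noticed between your output ($-L_y(x)$) and the statement ($+L_y(x)-\mathbf 1_{[0,y)}(x)$) is not attributable to the $\omega=\varepsilon$ term: your wrong remainder enters at \emph{every} level $t$, and the gap between the two answers is $2L_y(x)-\mathbf 1_{[0,y)}(x)$, which cannot be absorbed by reallocating a single leading term. To complete the proof you must actually establish the displayed identity above, which is where the sign analysis on $k_{n,j}$ (the $\mathbf 1_{(-\infty,0)}(k_{n,j})$ corrections inside $C(z)$ and the flip of the inequality $x_{n,j}<z$ into $x>T_j(z)$ for decreasing branches), the telescoping $E_i(x)+F_{i-1}(x)=\sum_j\frac{p_j}{|k_{i,j}|}(\mathbf 1_{I_i}(x_{i,j})-1)$, and assumption (A4) at the endpoints all get used; the rest of your reassembly (Fubini, $\KB_i$, Lemma~\ref{l:yandC}) then goes through exactly as you describe.
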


\begin{proof}
For $y \in [0,1]$, let $1 \le n \le N$ be the index such that $y \in I_n$. By Fubini's Theorem, we get
\begin{equation}\label{q:DCT1} \sum_{j\in \Omega} \sum_{i=1}^N \frac{p_j}{|k_{i,j}|} \mathbf{1}_{I_i}(x_{i,j}) L_y(x_{i,j}) =  \sum_{t \ge 0} \sum_{\omega \in \Omega^{t}} \delta_\omega (y,t) \sum_{i=1}^N \sum_{j\in \Omega} \frac{p_j}{|k_{i,j}|} \mathbf{1}_{I_i \cap [0,T_\omega(y))}(x_{i,j}).
\end{equation}
For Lebesgue almost every $x \in [0,1]$ and for $n \neq 1$ it holds that
\begin{equation} \label{q:0y} \begin{split}
\sum_{j \in \Omega} \frac{p_j}{|k_{n,j}|}\mathbf{1}_{(-\infty,0)}(k_{n,j}) + & \sum_{j \in \Omega} \frac{p_j}{k_{n,j}} \mathbf{1}_{[0,T_j(y))}(x) + F_{n-1}(x) \\
=& \sum_{j \in \Omega} \bigg( \frac{p_j}{|k_{n,j}|}  \mathbf{1}_{(-\infty,0)}(k_{n,j})(1-\mathbf{1}_{[0,T_j(y))}(x) -\mathbf{1}_{[b_{n-1,j},1]}(x)) \\
& +  \frac{p_j}{|k_{n,j}|} \mathbf{1}_{(0,\infty)}(k_{n,j}) (\mathbf{1}_{[0,T_j(y))}(x) - \mathbf{1}_{[0,b_{n-1,j})}(x)) \bigg) \\
 =& \sum_{j\in \Omega} \frac{p_j}{|k_{n,j}|} \mathbf{1}_{I_n \cap [0,y)}(x_{n,j}).
\end{split} 
\end{equation}
Using (A4) we also get that
\begin{equation} \nonumber
\begin{split}
\sum_{j \in \Omega} \frac{p_j}{|k_{1,j}|}\mathbf{1}_{(-\infty,0)}(k_{1,j}) + & \sum_{j \in \Omega} \frac{p_j}{k_{1,j}} \mathbf{1}_{[0,T_j(y))}(x) + F_{0}(x) =  \sum_{j\in \Omega} \frac{p_j}{|k_{1,j}|} \mathbf{1}_{I_1 \cap [0,y)}(x_{1,j}),
\end{split} 
\end{equation}
so the statement from \eqref{q:0y} holds for all $1 \le n \le N$. Since $y \in I_n$ we have for Lebesgue almost every $x \in [0,1]$ that
\[
\sum_{i=1}^{N-1} (E_i(x) + F_i(x)) \mathbf{1}_{B_i}(y) = \sum_{i=1}^{n-1} \sum_{j\in \Omega} \frac{p_j}{|k_{i,j}|}(\mathbf{1}_{I_i}(x_{i,j})-1) +F_{n-1}(x).
\]
Combining this with \eqref{q:0y} and the definition of $C(y)$ from \eqref{q:Cy} we obtain that for each $y \in [0,1]$, there is a set of $x \in [0,1]$ of full Lebesgue measure, for which
\[ \begin{split}
\sum_{j\in \Omega} &\ \sum_{i=1}^N \frac{p_j}{|k_{i,j}|}  \mathbf{1}_{I_i \cap [0, y)}(x_{i,j})  
\\ = & \sum_{j \in \Omega} \sum_{i=1}^{n-1} \frac{p_j}{|k_{i,j}|} \mathbf{1}_{I_i}(x_{i,j}) + \sum_{j \in \Omega} \frac{p_j}{|k_{n,j}|}\mathbf{1}_{(-\infty,0)}(k_{n,j}) + \sum_{j\in \Omega} \frac{p_j}{k_{n,j}} \mathbf{1}_{[0,T_j(y))}(x) + F_{n-1}(x)
\\ =&  \sum_{i=1}^{N-1} (E_i(x) + F_i(x)) \mathbf{1}_{B_i}(y) + C(y) + \sum_{j \in \Omega} \tau_j(y,1) \mathbf{1}_{[0,T_j(y))}(x).
\end{split}\]
Hence, by \eqref{q:DCT1} we also have that for Lebesgue almost every $x \in [0,1]$,
\[ \begin{split}
\sum_{j\in \Omega}  \sum_{i=1}^N \frac{p_j}{|k_{i,j}|}  \mathbf{1}_{I_i}(x_{i,j}) L_y(x_{i,j}) =\ &
 \sum_{i=1}^{N-1} (E_i(x) + F_i(x)) \sum_{t \ge 0} \sum_{\omega \in \Omega^{t}} \delta_\omega (y,t) \mathbf{1}_{B_i}(T_\omega (y)) \\
 & +\sum_{t \ge 0} \sum_{\omega \in \Omega^{t}} \delta_\omega (y,t) C(T_\omega(y)) + \sum_{t \ge 1} \sum_{\omega \in \Omega^{t}} \delta_\omega (y,t) \mathbf{1}_{[0,T_{\omega}(y))}(x).
\end{split}\]
The statement now follows from the definition of $\KB_i$ from \eqref{KB} and Lemma~\ref{l:yandC}.
\end{proof}

\begin{proof}[Proof of Theorem~\ref{thmm}]
First note that for all $1 \le i \le N-1$ and all $x \in [0,1]$,
\[ E_i(x)+\eta_i = \sum_{j \in \Omega} \frac{p_j}{k_{i,j}} \bigg( \mathbf{1}_{[0,a_{i,j})}(x) - a_{i,j} \bigg)\]
and 
\[ F_i(x)+\phi_i =\sum_{j\in \Omega} \frac{p_j}{k_{i+1,j}} \bigg( -\mathbf{1}_{[0,b_{i,j})}(x) + b_{i,j} \bigg). \]
Together they give that
\[ \begin{split} \sum_{\ell \in \Omega} \bigg( \frac{p_\ell}{k_{m,\ell}}(-\mathbf{1}_{[0,a_{m,\ell})}(x) + a_{m,\ell}) - \frac{p_\ell}{k_{m+1},\ell} (-\mathbf{1}_{[0,b_{m,\ell})}(x)+b_{m,\ell}) \bigg)\\
= -(E_m(x)+\eta_m + F_m(x)+\phi_m).
\end{split}\]
Using this together with Lemma~\ref{l:Ly} and Fubini's Theorem, we get by \eqref{catena} that for Lebesgue almost every $x \in [0,1]$,
\[ \begin{split}
P_T h_\gamma(x) =\ & \sum_{m=1}^{N-1} \gamma_m   \sum_{i=1}^{N-1} (E_i(x)+\eta_i + F_i(x) + \phi_i) \sum_{\ell \in \Omega} \bigg( \frac{p_\ell}{k_{m,\ell}}\KB_i(a_{m,\ell}) - \frac{p_\ell}{k_{m+1,\ell}} \KB_i (b_{m,\ell}) \bigg)\\
& - \sum_{m=1}^{N-1} \gamma_m (E_m(x)+\eta_m + F_m(x)+\phi_m) + h_\gamma (x).
\end{split}\]
From the second part of Lemma~\ref{l:orth} we can deduce by multiplying with $E_i(x)+\eta_i + F_i(x)+\phi_i$ and summing over all $i$ that
\begin{multline*}
 \sum_{i=1}^{N-1}(E_i(x)+\eta_i +F_i(x)+\phi_i)\gamma_i \\
 = \sum_{i=1}^{N-1} (E_i(x)+\eta_i +F_i(x)+\phi_i) \sum_{m=1}^{N-1} \gamma_m  \sum_{j\in \Omega} \bigg(\frac{p_j}{k_{m,j}} \KB_i(a_{m,j}) - \frac{p_j}{k_{m+1,j}} \KB_i(b_{m,j}) \bigg).
\end{multline*}
Hence, we have obtained that $h_\gamma$ is a $T$-invariant function in $L^1(\lambda)$.

\vskip .2cm
It remains to show that $h_\gamma \neq 0$. Recall from Section~\ref{s:preliminaries} that any $T$-invariant $L^1(\lambda)$-function is of bounded variation. So, at any point $y \in [0,1]$ the limits $\lim_{x \uparrow y} h_\gamma(x)$ and $\lim_{x \downarrow y} h_\gamma(x)$ exist. Consider $1 \leq \ell \leq N-1$ and assume $z_{\ell} \in I_{\ell}$. Then for all $y \in [0,1]$, by \eqref{q:convergence} and \eqref{KB}, we obtain by the Dominated Convergence Theorem,
\begin{equation}\nonumber
\begin{split}
\lim_{x \downarrow z_\ell} L_y (x) &= \sum_{t \geq 0} \sum_{\omega \in  \Omega^{t}} \delta_{\omega}(y,t) \lim_{x \downarrow z_\ell} \mathbf{1}_{[0,T_{\omega}(y))}(x)  = \sum_{t \geq 0 } \sum_{\omega \in \Omega^t} \delta_{\omega}(y,t) \mathbf{1}_{B_{\ell}}(T_{\omega}(y)) = \KB_{\ell}(y).
\end{split}
\end{equation}
From this, Lemma \ref{l:orth} and the Dominated Convergence Theorem again we then get
\begin{equation}\label{q:limitzi}
\begin{split}
\lim_{x \downarrow z_\ell} h_{\gamma}(x) &= \sum_{m=1}^{N-1} \gamma_m \sum_{j \in \Omega} \lim_{x \downarrow z_\ell} \bigg[ \frac{p_j}{k_{m,j}} L_{a_{m,j}}(x)- \frac{p_j}{k_{m+1,j}} L_{b_{m,j}}(x) \bigg] \\
&= \sum_{m=1}^{N-1} \gamma_m \sum_{j \in \Omega} \bigg[ \frac{p_j}{k_{m,j}} \KB_{\ell}(a_{m,j})- \frac{p_j}{k_{m+1,j}} \KB_{\ell}(b_{m,j}) \bigg] \\
&= \gamma_{\ell}.
\end{split}
\end{equation}
If, on the other hand, $z_\ell \in I_{\ell+1}$, then we obtain similarly that $\lim_{x \uparrow z_\ell} L_y(x) = \KB_{\ell}(y)$ and thus that $\lim_{x \uparrow z_\ell} h_{\gamma}(x) = \gamma_\ell$. Hence, $h_{\gamma}=0$ implies $\gamma=0$. This proves the theorem.
\end{proof}

\begin{nrem}
Theorem~\ref{thmm} assigns to each solution $\gamma \neq 0$ of $M \gamma=0$ a $T$-invariant $L^1(\lambda)$-function $h_\gamma \neq 0$. From $h_\gamma$ we can get invariant densities for $T$ as follows. If $h_\gamma$ is positive or negative, then we can scale $h_\gamma$ to an invariant density function. If not, then we can write $h_\gamma = h^+ - h^-$ for two positive functions $h^+:[0,1] \to [0, \infty)$ and $h^-:[0,1] \to [0, \infty)$ and by the linearity and the positivity of $P_T$ it follows that
\[ h^+ - h^- = h_\gamma = P_T h_\gamma = P_T h^+ - P_T h^-.\]
Hence, $h^+$ and $h^-$ can both be normalised to obtain invariant densities for $T$.
\end{nrem}

\begin{nrem}\label{r:smallM}
In order to compute $h_\gamma$, one needs to compute the fundamental matrix $M$ and a vector $\gamma$ first. Lemma~\ref{sol} implies that when $N$ is small, the computation of $\gamma$ is straightforward. Indeed, for $N=2$, $M$ is the null-vector, and we can take $\gamma =1$. This is illustrated by the example of the random tent maps from Section~\ref{s:tent}. For $N=3$, it is enough to compute only one row of $M$ and take $\gamma = \begin{pmatrix} 
-\mu_{i,2}&
\mu_{i,1}
\end{pmatrix}^\intercal$. We see an illustration of this fact in Sections~\ref{sec5} and \ref{s:beta2} on random $\beta$-transformations. For larger $N$, the computation of $M$ can still be simplified by using the relations from Lemma~\ref{rel}. 
\end{nrem}

To end this section we give a small example to show that condition (A5) is not necessary for Theorem~\ref{thmm} to hold. Consider the random system with $\Omega = \{0,1\}$, $T_0(x) = 2x \pmod 1$ the doubling map, $T_1(x) = 1-T_0(x)$ and $p_0 = p_1 = \frac12$. Then $N=2$ and for both $n=1,2$ we have $S_n = \frac12 \cdot \frac12 - \frac12 \cdot \frac12 =0$. Hence $M = \begin{pmatrix} 0 & 0 \end{pmatrix}^\intercal$ and any $\gamma= \gamma_1 \in \mathbb R \setminus \{0\}$ is a non-trivial solution to $M \gamma=0$. Since all critical points of $T_0$ and $T_1$ are mapped to $0$ or $1$, the function $h_1$ from \eqref{h} will be of the form $c \cdot {\mathbf 1}_{[0,1)}$ for some $c \neq 0$ and the function $h_\gamma=\frac{\gamma}{c} \cdot {\mathbf 1}_{[0,1)}$ is indeed invariant for $T$.

\section{All possible absolutely continuous invariant measures}\label{s:alldensities}
The aim of this section is twofold. Firstly, we prove that the way $T$ is defined on the partition points $z_\ell$ does not influence the final result. In other words, the set of invariant functions we obtain from Theorem~\ref{thmm} if $z_\ell \in I_\ell$ is equal to the set of invariant functions we obtain if we choose $z_\ell \in I_{\ell+1}$. This is the content of Proposition~\ref{p:partition}. The amount of work it takes to compute the matrix $M$ and the invariant functions $h_\gamma$ depend on whether $z_\ell \in I_\ell$ or $z_\ell \in I_{\ell+1}$. Proposition~\ref{p:partition} tells us that we are free to choose the most convenient option. We shall see several examples below. Next we will use Proposition~\ref{p:partition} to prove that, under the additional assumption that all maps $T_j$ are expanding, Theorem~\ref{thmm} actually produces all absolutely continuous invariant measures of  $T$. We do this by proving in Theorem~\ref{t:alldensities} that the map $\gamma \mapsto h_\gamma$ is a bijection between the null space of $M$ and the subspace of $L^1(\lambda)$ of all $T$-invariant functions.

\begin{prop}\label{p:partition}
Let $T$ be a random system with partition $\{I_i\}_{1 \le i \le N}$ and corresponding partition points $z_0, \ldots, z_{N}$. Let $\{\hat I_i \}_{1 \le i \le N}$ be another partition of $[0,1]$ given by $z_0, \ldots, z_{N}$ and differing from $\{I_i\}_{1 \le i \le N}$ only on one or more of the points $z_1, \ldots, z_{N-1}$. Let $\hat T$ be the corresponding random system, i.e., $\hat T(x) = T(x)$ for all $x \neq z_i$, $1\le i \le N-1$. Let $\hat{M}$ be the fundamental matrix of $\hat{T}$. There is a 1-to-1 correspondence between the solutions $\gamma$ of $M \gamma=0$ and the solutions $\hat{\gamma}$ of $\hat{M} \hat{\gamma}=0$. Moreover, the functions $h_{\gamma}$ and $\hat{h}_{\hat{\gamma}}$ coincide.  
\end{prop}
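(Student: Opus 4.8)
The plan is to show that changing the assignment of a single partition point $z_\ell$ from $I_\ell$ to $I_{\ell+1}$ (and then iterating over all changed points) affects neither the fundamental matrix $M$ nor the functions $h_m$, once one tracks carefully the only quantities where the assignment enters: the values $\KI_n(y)$ for $y$ equal to a one-sided limit point $a_{m,j}$ or $b_{m,j}$, and the characteristic functions $\mathbf 1_{I_i}$ evaluated along orbits. First I would reduce to the case where $\hat I$ differs from $I$ at exactly one point $z_\ell$; the general case follows by composing finitely many such changes.

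The key observation is that the quantities $\tau_\omega(y,t)$ and $\delta_\omega(y,t)$ depend on the interval containing $T_{\omega_1^{t-1}}(y)$ only through the slope $k_{i,\omega_t}$, and since $T$ is continuous on the circle by (A4) the two maps $T_{\ell,j}$ and $T_{\ell+1,j}$ agree at $z_\ell$ as \emph{values} (both equal $a_{\ell,j}$ from the left and $b_{\ell,j}$ from the right — the discontinuity of $T$ at $z_\ell$, if any, is a jump, and whichever branch we assign at $z_\ell$ we feed the value $T_\ell(z_\ell)$ or $T_{\ell+1}(z_\ell)$ into the next step). So the effect of reassigning $z_\ell$ is purely a question of which slope weight $p_j/k_{\ell,j}$ versus $p_j/k_{\ell+1,j}$ is used whenever an orbit lands exactly on $z_\ell$. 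I would argue that for the construction of $M$ this never happens in a way that matters: the orbits that enter the definition of $M$ start at $a_{m,j}$ or $b_{m,j}$, and even if such an orbit hits $z_\ell$ at some time $t$, the contribution $\KI_n$ is a sum over all $t\ge 1$ of weighted slopes $\delta_\omega(y,t)\mathbf 1_{I_n}(T_{\omega_1^{t-1}}(y))$, and a single reassignment at one point changes at most the terms where $T_{\omega_1^{t-1}}(y)=z_\ell$; for those terms I would show the net change cancels between the $\KI_n$-contribution and the matrix entry bookkeeping, or more cleanly, show directly that $M$ and $\hat M$ have the same null space by exhibiting the explicit correspondence $\gamma\mapsto\hat\gamma$.

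The cleanest route, which I would actually pursue, avoids chasing individual orbits: I would show that $h_\gamma$ as defined in \eqref{q:hgamma} is \emph{independent} of the partition choice, by observing that $L_y(x)$ depends on the orbit of $y$ only through the values $T_\omega(y)$ and the slopes encountered, and the sum $\sum_{t\ge 0}\sum_{\omega\in\Omega^t}\delta_\omega(y,t)\mathbf 1_{[0,T_\omega(y))}(x)$ is insensitive to the assignment at $z_\ell$ for all $x\ne z_\ell$, hence as an $L^1(\lambda)$-function. Since Theorem~\ref{thmm} produces a $T$-invariant function from any null vector of $M$, and the map $\hat T$ has the same Perron--Frobenius operator $P_{\hat T}=P_T$ as $T$ (they differ only on a $\lambda$-null set), the spaces of invariant functions coincide. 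Then I would set up the bijection: given $\gamma$ with $M\gamma=0$, define $\hat\gamma$ by the same components (or a rescaling forced by the $S_\ell^{-1}$ normalisation if the indexing of $h_m$ shifts), verify $\hat M\hat\gamma=0$ using that $h_\gamma=h_{\hat\gamma}$ together with the already-proven correspondence in Section~4 between null vectors and the specific linear combinations of the $h_m$, and conclude injectivity and surjectivity by symmetry (swapping the roles of $I$ and $\hat I$).

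The main obstacle I anticipate is the bookkeeping at the reassigned point when an orbit of some $a_{m,j}$ or $b_{m,j}$ actually passes through $z_\ell$: there one must check that the discontinuity in the \emph{slope weight} does not produce a genuine difference in $\KI_n(a_{m,j})$, and the honest way to handle this is probably to note that the set of $y\in[0,1]$ whose forward orbit ever hits a partition point is at most countable, combined with the fact that $M$'s entries and the functions $h_m$ are continuous (or at least the relevant identities hold) as one varies the base points — so the finitely many exceptional $(m,j)$ can be absorbed, or one simply verifies the two expressions for $\mu_{n,i}$ and $\hat\mu_{n,i}$ are equal by a direct substitution using $a_{\ell,j}$, $b_{\ell,j}$ and the continuity from (A4). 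I would present this last verification as the technical heart of the argument and leave the bijection and the equality $P_T=P_{\hat T}$ as short formal consequences.
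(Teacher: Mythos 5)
There is a genuine gap, and it lies exactly where you deferred the work. Your central claims --- that the branch values at $z_\ell$ agree (``$T_{\ell,j}$ and $T_{\ell+1,j}$ agree at $z_\ell$ as values''), that $L_y$ is insensitive to the reassignment, and that consequently $\hat\gamma$ can be taken with the same components as $\gamma$ --- are false in general. Assumption (A4) forces continuity only at $0$ and $1$ on the circle; at an interior partition point $z_\ell$ the map $T_j$ typically has a jump, so $a_{\ell,j}=k_{\ell,j}z_\ell+d_{\ell,j}\neq k_{\ell+1,j}z_\ell+d_{\ell+1,j}=b_{\ell,j}$. Reassigning $z_\ell$ therefore changes both the weight ($p_j/k_{\ell,j}$ versus $p_j/k_{\ell+1,j}$) \emph{and} the subsequent value of the orbit whenever an orbit lands exactly on $z_\ell$. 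Crucially, the orbits that enter the definition of $M$ and of $h_\gamma$ are those of the finitely many specific points $a_{m,j},b_{m,j}$, not Lebesgue-generic points, so your ``orbits hit partition points only on a negligible set'' escape route is unavailable: in the paper's own golden-mean example one has $T_{2,0}(z_1)=z_2$ and $T_{2,1}(z_2)=z_1$, so these hits do occur and $\KI_n(y)\neq\hKI_n(y)$, $L_y\neq\hat L_y$, and $M\neq\hat M$.

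The paper's proof does precisely the bookkeeping you hoped to avoid. It decomposes $\KI_n(y)$ into a common part $C_n^y$ plus the contribution after the first passage through $z_\ell$ (encoded by $Q(y)=\sum_{t\ge0}\sum_{\omega\in\Omega_y^t}\delta_\omega(y,t)$), solves a finite linear system by Cramer's rule to express $\sum_j\frac{p_j}{k_{\ell+1,j}}\hKI_n(b_{\ell,j})$ in terms of the unhatted data, and deduces the column relation $\hat\mu_{n,i}=\mu_{n,i}-Q_iB_\ell^{-1}\mu_{n,\ell}$ with $B_\ell=1-\sum_j Q(b_{\ell,j})\frac{p_j}{k_{\ell+1,j}}$. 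The resulting correspondence is \emph{not} the identity: $\hat\gamma_i=\gamma_i$ for $i\neq\ell$ but $\hat\gamma_\ell=\gamma_\ell+\sum_{i}\frac{Q_i}{B_\ell-Q_\ell}\gamma_i$, and the equality $h_\gamma=\hat h_{\hat\gamma}$ then requires an analogous first-passage/Cramer computation for the functions $L_y$. Your observation that $P_T=P_{\hat T}$ (they differ on a $\lambda$-null set) correctly explains why the \emph{spaces} of invariant functions coincide, but it does not yield the stated 1-to-1 correspondence between the null spaces of $M$ and $\hat M$, nor the identity $h_\gamma=\hat h_{\hat\gamma}$ for the explicitly constructed functions; without the quantitative comparison of $\KI_n$ with $\hKI_n$ the proposal does not prove the proposition.
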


\begin{proof}
First assume that there is only one point $z_\ell$ on which $\{I_i\}_{1 \le i \le N}$ and $\{\hat I_i \}_{1 \le i \le N}$ differ. We show that any column of $\hat M$ is a linear combination of columns of $M$. More precisely, we show that the $i$-th column of $\hat M$ is a linear combination of the $i$-th and the $\ell$-th column of $M$. Assume without loss of generality that $z_{\ell} \in I_{\ell}$ and therefore $z_{\ell} \in  \hat I_{\ell+1}$. This implies that $T_j(z_\ell) = a_{\ell,j}$, whereas $\hat T_j(z_\ell) = b_{\ell,j}$. This difference is reflected in the values of the quantities $\KI_n(a_{i,s})$ and $\KI_n(b_{i,s})$ appearing in the matrix $M$ in case $a_{i,s}$ or $b_{i,s}$ enters $z_\ell$ under some iteration of $T$. We will describe these changes, but first we define some quantities.

\vskip .2cm
For any $y \in \{ a_{i,j}, \, b_{i,j} \, : \, 1 \le i \le N-1, \, j \in \Omega \}$ let $\Omega_y \subseteq \Omega^*$ be the collection of paths that lead $y$ to $z_\ell$, i.e., $\omega \in \Omega_y$ if and only if there is a $0 \le t < |\omega|$, such that $T_{\omega_1^t}(y) = z_\ell$. Let
\[ \Omega_y^t := \{ \omega \in \Omega^* \, | \, \exists \, \eta \in \Omega_y \, : \, \omega= \eta_1^t, \, T_\omega (y)=z_\ell \, \text{ and } \, T_{\omega_1^s}(y) \neq z_\ell \text{ for }s<t \}.\]
Then $\Omega_y^t$ is the collection of words of length $t$ that lead $y$ to $z_\ell$ via a path that does not lead $y$ to $z_\ell$ before time $t$. We are interested in the difference between the quantities $\KI_n(y)$ and $\hKI_n(y)$ and we let $C_n^y$ denote the part that they have in common, i.e., set
\[ C_n^y := \sum_{t \ge 1} \sum_{\omega \in \Omega_y^t \cup \Omega^t \setminus \Omega_y} \delta_\omega(y,t) {\mathbf 1}_{I_n}(T_{\omega_1^{t-1}}(y)).\]
Then for $n \neq \ell$, we get
\[ \begin{split}
\KI_n(y) =\ & C_n^y + \sum_{t \ge 0} \sum_{\omega \in \Omega_y^t} \sum_{u \ge 1} \sum_{\eta \in \Omega^u} \delta_\omega (y,t) \delta_\eta(z_\ell,u) {\mathbf 1}_{I_n}(T_{\eta_1^{u-1}}(z_\ell))\\
=\ & C_n^y + \sum_{t \ge 0} \sum_{\omega \in \Omega_y^t} \sum_{u \ge 1} \sum_{\eta \in \Omega^u}  \sum_{j \in \Omega} \delta_\omega(y,t) \frac{p_j}{k_{\ell,j}} \delta_\eta (a_{\ell,j},u){\mathbf 1}_{I_n}(T_{\eta_1^{u-1}}(a_{\ell,j}))\\
=\ & C_n^y +  \sum_{t \ge 0} \sum_{\omega \in \Omega_y^t}\delta_\omega(y,t)\sum_{j \in \Omega}\frac{p_j}{k_{\ell,j}}  \KI_n(a_{\ell,j}),
\end{split}\]
and similarly, for $n=\ell$ we obtain
\[ \KI_\ell (y) = C_\ell^y +  \sum_{t \ge 0} \sum_{\omega \in \Omega_y^t}\delta_\omega(y,t)\sum_{j \in \Omega}\frac{p_j}{k_{\ell,j}} (1+ \KI_\ell(a_{\ell,j})).\]
If we set $Q(y) = \sum_{t \ge 0} \sum_{\omega \in \Omega_y^t}\delta_\omega(y,t)$ as the constant that keeps track of all the paths that lead $y$ to $z_\ell$ for the first time, then we can write
\begin{equation}\label{q:KIny} \begin{split}
\KI_n(y) = \ & C_n^y + Q(y) \sum_{j \in \Omega}\frac{p_j}{k_{\ell,j}} \KI_n(a_{\ell,j}), \, \text{for } n \neq \ell,\\
\KI_\ell(y) =\ & C_\ell^y + Q(y) \sum_{j \in \Omega}\frac{p_j}{k_{\ell,j}} (1+ \KI_\ell(a_{\ell,j})).
\end{split}\end{equation}
On the other hand, for $\hKI_n(y)$ we get
\begin{equation} \label{q:hatKIn} \begin{split}
\hKI_n(y) =\ & C_n^y + Q(y) \sum_{j \in \Omega}\frac{p_j}{k_{\ell+1,j}} \hKI_n(b_{\ell,j}), \, \text{for } n \neq \ell+1,\\
\hKI_{\ell+1}(y) =\ & C_{\ell+1}^y + Q(y) \sum_{j \in \Omega}\frac{p_j}{k_{\ell+1,j}} (1+ \hKI_{\ell+1}(b_{\ell,j})).
\end{split}\end{equation}
If $b_{\ell,j}$ does not return to $z_\ell$, then $\KI_n(b_{\ell,j}) = \hKI_n(b_{\ell,j})$. Set
\[ B : = \{ j \in \Omega \, : \, \Omega_{b_{\ell,j}} \neq \emptyset\}.\]
Then
\[ \begin{split}
\hKI_n(y) =\ & C_n^y + Q(y) \sum_{j \not \in B} \frac{p_j}{k_{\ell+1,j}} \KI_n(b_{\ell,j}) + Q(y) \sum_{j \in B}\frac{p_j}{k_{\ell+1,j}} \hKI_n(b_{\ell,j}), \, \text{for } n \neq \ell+1,\\
\hKI_{\ell+1}(y) =\ & C_{\ell+1}^y + Q(y) \sum_{j \not \in B} \frac{p_j}{k_{\ell+1,j}} (1+\KI_{\ell+1}(b_{\ell,j})) + Q(y) \sum_{j \in B}\frac{p_j}{k_{\ell+1,j}} (1+\hKI_{\ell+1}(b_{\ell,j})).
\end{split}\]
To determine the difference between $\KI_n(y)$ and $\hKI_n(y)$, we would like an expression of $\hKI_n(b_{\ell,j})$ in terms of $\KI_n(b_{\ell,j})$ for $j\in B$. Fix $n \neq \ell+1$ for a moment and set for each $j \in B$,
\[ A_j =  C_n^{b_{\ell,j}} + Q(b_{\ell,j}) \sum_{i \not \in B}\frac{p_i}{k_{\ell+1,i}} \KI_n(b_{\ell,i}).\]
Then we can find expressions of $\hKI_n(b_{\ell,j})$ in terms of the values $\KI_n(b_{\ell,i})$ by solving the following system of linear equations:
\[ \hKI_n(b_{\ell,j}) = A_j + Q(b_{\ell,j}) \sum_{i \in B} \frac{p_i}{k_{\ell+1,i}} \hKI_n (b_{\ell,i}), \quad j \in B.\]
A solution is easily computed through Cramer's method, which gives for $j \in B$
\begin{equation}
 \hKI_n (b_{\ell,j}) = \frac{\displaystyle A_j \bigg(1-\sum_{u\in B \setminus \{j\}}  Q(b_{\ell,u}) \frac{p_u}{k_{\ell+1,u}}\bigg) + Q(b_{\ell,j})  \sum_{u \in B \setminus \{ j\}} \frac{p_u}{k_{\ell+1,u}}A_u }{ \displaystyle 1- \sum_{i \in B} Q(b_{\ell,i}) \frac{p_i}{k_{\ell+1,i}} }.
\end{equation}
Set
\[ B_\ell := 1-\sum_{j \in \Omega} Q(b_{\ell,j}) \frac{p_j}{k_{\ell+1,j}}.\]
Below we will use $B_\ell^{-1}$. If $| Q(b_{\ell,j})| \leq 1$, then
\[ \Big|  \sum_{j \in \Omega} Q(b_{\ell,j}) \frac{p_j}{k_{\ell+1,j}} \Big|  \leq \sum_{j \in \Omega} | Q(b_{\ell,j})| \frac{p_j}{|k_{\ell+1,j}|} \leq \sum_{j \in \Omega} \frac{p_j}{|k_{\ell+1,j}|} \leq \rho < 1,\]
so in this case $B_\ell \neq 0$ and $B_\ell^{-1}$ is well defined. We now show that $| Q(b_{\ell,j})| \leq 1$. If $b_{\ell,j} = z_\ell$, then $\Omega_{b_{\ell,j}}^t = \emptyset $ for any $t \geq 1$, and so $Q(b_{\ell,j})=1$. If $b_{\ell,j} \neq z_\ell$, then $Q(b_{\ell,j})= \sum_{t \geq 1} \sum_{\omega \in \Omega_{b_{\ell,j}}^t}\delta_\omega(b_{\ell,j},t) $. By the expanding on average property (A2), for any $y \in I$, any $t \ge 0$ and any $\omega \in \Omega^{\mathbb{N}}$,
\begin{equation}\label{e:fatherandsons}
|\delta_{\omega}(y, t)| > \sum_{j \in \Omega} |\delta_{\omega}(y, t) \tau_j(T_{\omega_1^t}(y), 1)| = \sum_{j \in \Omega} |\delta_{\omega j}(y, t+1)|.
\end{equation}
Note that by the definition of $Q(b_{\ell,j})$ the union
\begin{equation}\label{q:disjointunion}
\bigcup_{t \ge 1} \bigcup_{\omega \in \Omega^t_{b_{\ell,j}}} [\omega] \subseteq \Omega^\mathbb N
\end{equation}
is a disjoint union of cylinder sets. Hence, by repeated application of \eqref{e:fatherandsons} we obtain for each $n \ge 1$ that
\[ \begin{split} 1 = |\delta_\epsilon (b_{\ell,j},0)| > \, & \sum_{i_1 \in \Omega} |\delta_{i_1}(b_{\ell,j},1)| = \sum_{i_1 \in \Omega_{b_{\ell,j}}} |\delta_{i_1}(b_{\ell,j},1)| + \sum_{i_1 \in \Omega_{b_{\ell,j}}^c} |\delta_{i_1}(b_{\ell,j},1)|\\
> \, & \sum_{i_1 \in \Omega_{b_{\ell,j}}} |\delta_{i_1}(b_{\ell,j},1)| + \sum_{i_1 \in \Omega_{b_{\ell,j}}^c} \sum_{i_2 \in \Omega} |\delta_{i_1i_2}(b_{\ell,j},2)|\\
= \, & \sum_{t=1}^2 \sum_{\omega \in \Omega_{b_{\ell,j}}^t} |\delta_{\omega}(b_{\ell,j},t)| + \sum_{\omega \in (\Omega_{b_{\ell,j}} \cup \Omega_{b_{\ell,j}}^2)^c} |\delta_{\omega}(b_{\ell,j},2)|\\
> \, & \cdots >  \sum_{t=1}^n \sum_{\omega \in \Omega_{b_{\ell,j}}^t} |\delta_{\omega}(b_{\ell,j},t)| + \sum_{\omega \in ( \cup_{t=1}^n\Omega_{b_{\ell,j}}^t)^c} |\delta_{\omega}(b_{\ell,j},n)|.
\end{split}\]
Since this holds for each $n$, we get $|Q(b_{\ell,j})| \le 1$ and $B_\ell \neq 0$.

\vskip .2cm
For $i \not \in B$ it holds that $\KI_n(b_{\ell,i}) = C_n^{b_{\ell,i}}$. Then by the definition of $B_\ell$, we get
\begin{equation}\label{q:shorthatKI}
\begin{split}
 \sum_{j\in B} \frac{p_j}{k_{\ell+1,j}}  \hKI_n (b_{\ell,j}) =\ &  B_\ell^{-1} \sum_{j \in B} \frac{p_j}{k_{\ell+1,j}} A_j\\
 =\ & B_\ell^{-1} \sum_{j \in B} \frac{p_j}{k_{\ell+1,j}} \Big( C^{b_{\ell,j}}_n  + Q(b_{\ell,j}) \sum_{i \not \in B} \frac{p_i}{k_{\ell+1,i}} C_n^{b_{\ell,i}}\Big)\\
 =\ & B_\ell^{-1} \sum_{j \in B} \frac{p_j}{k_{\ell+1,j}} C^{b_{\ell,j}}_n + B_\ell^{-1} (1-B_\ell) \sum_{i \not \in B}\frac{p_i}{k_{\ell+1,i}} C_n^{b_{\ell,i}}\\
 =\ & B_\ell^{-1} \sum_{j \in \Omega} \frac{p_j}{k_{\ell+1,j}} C^{b_{\ell,j}}_n - \sum_{i \not \in B}\frac{p_i}{k_{\ell+1,i}} C_n^{b_{\ell,i}}.
\end{split}\end{equation}
We obtain similar expressions for $n=\ell+1$. For each $1 \le i \le N-1$, let
\[ Q_i:= \sum_{j \in \Omega}\Big( \frac{p_j}{k_{i,j}} Q(a_{i,j}) - \frac{p_j}{k_{i+1,j}} Q(b_{i,j}) \Big).\]
We show that for each $1 \le n \le N$ and $1 \le i \le N-1$ we have
\begin{equation}\nonumber
\hat \mu_{n,i} = \mu_{n,i} - Q_iB_\ell^{-1} \mu_{n,\ell},
\end{equation}
i.e., the $i$-th column of $\hat M$ is a linear combination of the $i$-th and the $\ell$-th column of $M$. We give the proof only for $n \not \in \{ \ell, \ell+1, i, i+1\}$, since the other cases are very similar. To prove this, we first rewrite $\mu_{n,i} - Q_iB_\ell^{-1}\mu_{n,\ell}$. Therefore, note that
\begin{equation}\nonumber
\begin{split}
\sum_{j \in \Omega} \frac{p_j}{k_{\ell,j}} \KI_n(a_{\ell,j}) & - B_{\ell}^{-1} \Big( \sum_{j \in \Omega} \frac{p_j}{k_{\ell,j}} \KI_n(a_{\ell,j}) - \sum_{j \in B} \frac{p_j}{k_{\ell+1,j}} Q(b_{\ell,j}) \sum_{i \in \Omega} \frac{p_i}{k_{\ell,i}} \KI_n(a_{\ell,i}) \Big)\\
=\ & \sum_{j \in \Omega} \frac{p_j}{k_{\ell,j}}  \KI_n(a_{\ell,j})( 1-B_\ell^{-1} B_\ell)=0.
\end{split} 
\end{equation}
Then we obtain from the definition of $M$, \eqref{q:KIny} and the above equation that
\begin{equation}\nonumber \begin{split}
\mu_{n,i} - Q_iB_\ell^{-1}\mu_{n,\ell} = & \sum_{j \in \Omega} \Big( \frac{p_j}{k_{i,j}} C_n^{a_{i,j}} - \frac{p_j}{k_{i+1,j}} C_n^{b_{i,j}} \Big) + Q_i \sum_{j \in \Omega} \frac{p_j}{k_{\ell,j}} \KI_n(a_{\ell,j})\\
\ & - Q_i B_\ell^{-1} \sum_{j \in \Omega} \frac{p_j}{k_{\ell,j}} \KI_n(a_{\ell,j}) + Q_i B_\ell^{-1} \sum_{j \not \in B} \frac{p_j}{k_{\ell+1,j}} \KI_n(b_{\ell,j})\\
&+ Q_i B_\ell^{-1} \sum_{j \in B} \frac{p_j}{k_{\ell+1,j}} \Big( C_n^{b_{\ell,j}} + Q(b_{\ell,j}) \sum_{u \in \Omega} \frac{p_u}{k_{\ell,u}} \KI_n(a_{\ell,u}) \Big)\\
= \ &  \sum_{j \in \Omega} \Big( \frac{p_j}{k_{i,j}} C_n^{a_{i,j}} - \frac{p_j}{k_{i+1,j}} C_n^{b_{i,s}} \Big)  + Q_i B_\ell^{-1} \sum_{j \in \Omega} \frac{p_j}{k_{\ell+1,j}} C_n^{b_{\ell,j}}.
\end{split}
\end{equation}
For $\hat \mu_{n,i}$ we get by combining \eqref{q:hatKIn} and \eqref{q:shorthatKI} that
\begin{equation}\nonumber
\begin{split}
\hat \mu_{n,i} = & \sum_{j \in \Omega} \Big( \frac{p_j}{k_{i,j}} C_n^{a_{i,j}} + \frac{p_j}{k_{i+1,j}} C_n^{b_{i,j}} \Big) + Q_i \sum_{j \not \in B} \frac{p_j}{k_{\ell+1,j}} \KI_n(b_{\ell,j})\\
& + Q_i B_{\ell}^{-1} \sum_{j \in \Omega} \frac{p_j}{k_{\ell+1,j}} C^{b_{\ell,j}}_n - Q_i \sum_{j \not \in B}\frac{p_j}{k_{\ell+1,j}} C_n^{b_{\ell,j}} = \mu_{n,i} - Q_i B_{\ell}^{-1} \mu_{n,\ell}.\\
\end{split}
\end{equation} 

\vskip  .2cm
One now easily checks that if $\gamma = (\gamma_1, \ldots, \gamma_{N-1})^\intercal$ is a solution of $M\gamma=0$, then the vector $\hat \gamma = (\hat \gamma_1, \ldots, \hat \gamma_{N-1})^\intercal$ given by
\begin{equation}\label{q:hatgammal}
\hat \gamma_\ell = \gamma_\ell + \sum_{i=1}^{N-1} \frac{Q_i}{B_\ell - Q_\ell} \gamma_i 
\end{equation}
and $\hat \gamma_i = \gamma_i$ if $i \neq \ell$, satisfies $\hat M \hat \gamma =0$. The fact that $B_\ell - Q_\ell \neq 0$ follows in the same way as that $B_\ell \neq 0$. Hence, there is a 1-to-1 relation between the solutions $\gamma$ of $M \gamma=0$ and $\hat \gamma$ of $\hat M \hat \gamma=0$.

\vskip .2cm
It remains to prove that the functions $h_\gamma$ and $\hat h_{\hat \gamma}$ coincide. For that we need to consider the functions $L_y$. As we did for $\KI_n$, let $L^y$ denote the parts that $L_y$ and $\hat L_y$ have in common, i.e., set
\[ L^y = \sum_{t \ge 0} \sum_{\omega \in \Omega^t_y \cup \Omega^t \setminus \Omega_y} \delta_\omega (y,t) {\mathbf 1}_{[0, T_\omega(y))}.\]
Set $A : = \{ j \in \Omega \, : \, \Omega_{a_{\ell,j}} \neq \emptyset\}$. Then
\[\begin{split}
L_y =\ & L^y + Q(y) \sum_{t \ge 1} \sum_{\omega \in \Omega^t} \delta_\omega(z_\ell,t) {\mathbf 1}_{[0, \hat T_\omega(z_\ell))}\\
=\ & L^y + Q(y) \Big( \sum_{j \in \Omega} {\mathbf 1}_{[0, a_{\ell,j})} + \sum_{t \ge 1} \sum_{\omega \in \Omega^t} \frac{p_j}{k_{\ell,j}} \delta_\omega(b_{\ell,j},u) {\mathbf 1}_{[0, \hat T_\omega (a_{\ell,j}))} \Big)\\
=\ & L^y + Q(y) \sum_{j \not \in A} \frac{p_j}{k_{\ell,j}} L_{a_{\ell,j}} + Q(y) \sum_{j \in A} \frac{p_j}{k_{\ell,j}} L_{a_{\ell,j}}.
\end{split}\]
By Cramer's rule we obtain for each $j \in A$, that (compare \eqref{q:shorthatKI})
\begin{equation}\label{q:lals}
\sum_{j \in A} \frac{p_j}{k_{\ell,j}} L_{a_{\ell,j}} = (B_\ell-Q_\ell)^{-1} \sum_{j \in \Omega} \frac{p_j}{k_{\ell,j}} L^{a_{\ell,j}} -  \sum_{j \not \in A} \frac{p_j}{k_{\ell,j}} L_{a_{\ell,j}}.
\end{equation}
Similarly, we obtain that
\begin{equation} \label{q:hatLy} 
\hat L_y =  L^y + Q(y) \sum_{j \not \in B} \frac{p_j}{k_{\ell+1,j}} L_{b_{\ell,j}} + Q(y) \sum_{j \in B} \frac{p_j}{k_{\ell+1,j}} \hat L_{b_{\ell,j}}
\end{equation}
and
\begin{equation}\label{q:CramerL}
\sum_{j \in B} \frac{p_j}{k_{\ell+1,j}} \hat L_{b_{\ell,j}} = B_\ell^{-1} \sum_{j \in \Omega} \frac{p_j}{k_{\ell+1,j}} L^{b_{\ell,j}} - \sum_{j \not \in B} \frac{p_j}{k_{\ell+1,j}} L^{b_{\ell,j}}.
\end{equation}
To prove that $h_\gamma = \hat h_{\hat \gamma}$, note that on the one hand,
\[ h_\gamma =   \sum_{m=1}^{N-1} \gamma_m \sum_{j \in \Omega} \Big( \frac{p_j}{k_{m,j}} L^{a_{m,j}} - \frac{p_j}{k_{m+1,j}}L^{b_{m,j}} \Big) +  \sum_{m=1}^{N-1} \gamma_m Q_m \sum_{j \in \Omega} \frac{p_j}{k_{\ell,j}} L_{a_{\ell,j}}.\]
On the other hand, using equations \eqref{q:hatgammal}, \eqref{q:hatLy} and \eqref{q:CramerL} we obtain for $\hat h_{\hat \gamma}$ that
\[ \begin{split}
\hat h_{\hat \gamma} =\ & \sum_{m=1}^{N-1} \gamma_m \sum_{s \in \Omega} \Big( \frac{p_s}{k_{m,s}} L^{a_{m,s}} - \frac{p_s}{k_{m+1,s}} L^{b_{m,s}}\Big) + \sum_{m=1}^{N-1} \gamma_m Q_m \Big( 1+ \frac{Q_\ell}{B_\ell - Q_\ell} \Big) \sum_{s \in \Omega} \frac{p_s}{k_{\ell+1,s}} \hat L_{b_{\ell,s}}\\
 & +  \sum_{m=1}^{N-1} \gamma_m \frac{Q_m}{B_\ell - Q_\ell} \sum_{s \in \Omega} \Big( \frac{p_s}{k_{\ell,s}} L^{a_{\ell,s}} - \frac{p_s}{k_{\ell+1,s}} L^{b_{\ell,s}}\Big) \\
 =\ &  \sum_{m=1}^{N-1} \gamma_m \sum_{s \in \Omega} \Big( \frac{p_s}{k_{m,s}} L^{a_{m,s}} - \frac{p_s}{k_{m+1,s}} L^{b_{m,s}}\Big) +  \sum_{m=1}^{N-1} \gamma_m Q_m \frac{B_\ell}{B_\ell - Q_\ell} B_\ell^{-1} \sum_{s \in \Omega} \frac{p_s}{k_{\ell+1,s}} L^{b_{\ell,s}} \\
 & + \sum_{m=1}^{N-1} \gamma_m \frac{Q_m}{B_\ell - Q_\ell} \sum_{s \in \Omega} \Big( \frac{p_s}{k_{\ell,s}} L^{a_{\ell,s}} - \frac{p_s}{k_{\ell+1,s}} L^{b_{\ell,s}}\Big)  \\
 =\ & \sum_{m=1}^{N-1} \gamma_m \sum_{s \in \Omega} \Big( \frac{p_s}{k_{m,s}} L^{a_{m,s}} - \frac{p_s}{k_{m+1,s}} L^{b_{m,s}}\Big) + \sum_{m=1}^{N-1} \gamma_m \frac{Q_m}{B_\ell - Q_\ell} \sum_{s \in \Omega} \frac{p_s}{k_{\ell,s}} L^{a_{\ell,s}}.
\end{split}\]
By \eqref{q:lals} this implies that $h_\gamma = \hat h_{\hat \gamma}$.

\vskip .2cm
If the partitions $\{ I_n\}_{1 \le n \le N}$ and $\{ \hat I_n\}_{1 \le n \le N}$ differ in more than one partition point $z_\ell$, we can obtain the results from the above by changing one partition point at a time.
\end{proof}

The next lemma states that adding extra points to the set $z_0, \ldots, z_N$ does not influence the set of densities obtained from Theorem~\ref{thmm}. This lemma is one of the ingredients of the proof of Theorem~\ref{t:alldensities} below.

\begin{lem}\label{l:partition}
Let $T$ be a random system with partition $\{I_i\}_{1 \le i \le N}$ and corresponding partition points $z_0, \ldots, z_{N}$. Consider a refinement of the partition, given by adding extra points $z_1^{\dagger}, \ldots ,z_s^{\dagger}$, for some $s \in \mathbb{N}$. Let $T^{\dagger}$ be the corresponding random system, i.e., $T^{\dagger}(x) = T(x)$ for all $x \in [0,1]$, and let $M^{\dagger}$ be the fundamental matrix of $T^{\dagger}$. There is a 1-to-1 correspondence between the solutions $\gamma$ of $M \gamma=0$ and the solutions $\gamma^{\dagger}$ of $M^{\dagger} \gamma^{\dagger}=0$. Moreover, the functions $h_{\gamma}$ and $h^{\dagger}_{\gamma^{\dagger}}$ coincide.  
\end{lem}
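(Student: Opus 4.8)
The plan is to follow the scheme at the end of the proof of Proposition~\ref{p:partition}: it is enough to treat the insertion of a single point $z^\dagger$, since several points are inserted one at a time. The crucial observation is that $z^\dagger$ is \emph{not} a genuine critical point of any $T_j$, so the two branches of $T^\dagger$ adjacent to $z^\dagger$ are literally the same affine map. Say $z^\dagger\in I_p$, and relabel the partition points of $T^\dagger$ by $\hat z_i=z_i$ for $i\le p-1$, $\hat z_p=z^\dagger$ and $\hat z_i=z_{i-1}$ for $i\ge p+1$. Writing $\hat k_{i,\ell},\hat a_{i,\ell},\hat b_{i,\ell}$ for the data of $T^\dagger$, we have $\hat k_{p,\ell}=\hat k_{p+1,\ell}=k_{p,\ell}$, hence $\hat a_{p,\ell}=\hat b_{p,\ell}=T_\ell(z^\dagger)$, while for the remaining indices $\hat k,\hat a,\hat b$ are the old quantities with the index shift $i\mapsto i$ $(i<p)$, $i\mapsto i-1$ $(i>p+1)$; in particular $\hat a_{p-1,\ell}=a_{p-1,\ell}$, $\hat b_{p-1,\ell}=b_{p-1,\ell}$, $\hat a_{p+1,\ell}=a_{p,\ell}$ and $\hat b_{p+1,\ell}=b_{p,\ell}$. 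Since $T^\dagger=T$ as random maps, the weighted slopes $\delta_\omega(y,t)$ are unchanged, so $L_y$ is unchanged, and the only effect on the quantities in \eqref{KB} is that visits to $I_p$ are distributed over $\hat I_p$ and $\hat I_{p+1}$:
\[ \KI^\dagger_n=\KI_n\ (n<p),\qquad \KI^\dagger_p+\KI^\dagger_{p+1}=\KI_p,\qquad \KI^\dagger_n=\KI_{n-1}\ (n>p+1). \]
(If an orbit hits $z^\dagger$, the separate values $\KI^\dagger_p$ and $\KI^\dagger_{p+1}$ depend on whether $z^\dagger\in\hat I_p$ or $\hat I_{p+1}$, but by Proposition~\ref{p:partition} applied to $T^\dagger$ this is immaterial, and only the displayed sum will be used.)

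Two facts now follow by inspection of \eqref{h} and Definition~\ref{M}. First, the basic functions of $T^\dagger$ satisfy $h^\dagger_m=h_m$ for $1\le m\le p-1$ and $h^\dagger_m=h_{m-1}$ for $p+1\le m\le N$, and, crucially,
\[ h^\dagger_p(x)=\sum_{\ell\in\Omega}\Big(\frac{p_\ell}{\hat k_{p,\ell}}L_{\hat a_{p,\ell}}(x)-\frac{p_\ell}{\hat k_{p+1,\ell}}L_{\hat b_{p,\ell}}(x)\Big)\equiv 0, \]
the two sums cancelling term by term because $\hat k_{p,\ell}=\hat k_{p+1,\ell}$ and $\hat a_{p,\ell}=\hat b_{p,\ell}$. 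Second, reading off the entries of $M^\dagger$ from Definition~\ref{M} via the relations for $\KI^\dagger_n$ above — and tracking the ``correction'' summands at $n=i$ and $n=i+1$, which is where the bookkeeping is heaviest — one finds that the $p$-th column of $M^\dagger$ has all entries $0$ except the $p$-th (equal to $S_p$) and the $(p+1)$-st (equal to $-S_p$), whereas for $i\ne p$ the $i$-th column of $M^\dagger$ agrees with the $\iota(i)$-th column of $M$ on every row $n\notin\{p,p+1\}$ (after the obvious relabelling of those rows) and satisfies $\mu^\dagger_{p,i}+\mu^\dagger_{p+1,i}=\mu_{p,\iota(i)}$, where $\iota(i)=i$ for $i<p$ and $\iota(i)=i-1$ for $i>p$.

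The correspondence between null spaces is then pure linear algebra. Given $\gamma$ with $M\gamma=0$, set $\gamma^\dagger_i:=\gamma_{\iota(i)}$ for $i\ne p$. For each row $n\notin\{p,p+1\}$ the $n$-th equation of $M^\dagger\gamma^\dagger=0$ reduces, since the $p$-th column of $M^\dagger$ vanishes there, to one of the equations of $M\gamma=0$; and summing the $p$-th and $(p+1)$-st equations of $M^\dagger\gamma^\dagger=0$ reduces to the $p$-th equation of $M\gamma=0$ (the $\gamma^\dagger_p$-terms cancel as $S_p+(-S_p)=0$). All of these hold, so, using $S_p\ne 0$, there is a unique $\gamma^\dagger_p$ making the $p$-th equation of $M^\dagger\gamma^\dagger=0$ hold, after which the $(p+1)$-st equation holds automatically; thus $\gamma^\dagger\in\ker M^\dagger$. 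Conversely any $\gamma^\dagger\in\ker M^\dagger$ arises in this way from $\gamma_m:=\gamma^\dagger_{\iota^{-1}(m)}$ (same reduction gives $M\gamma=0$, and $\gamma^\dagger_p$ is then forced). Hence $\gamma\mapsto\gamma^\dagger$ is a bijection $\ker M\to\ker M^\dagger$.

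Finally, for this $\gamma^\dagger$, using the identities for $h^\dagger_m$ from the second paragraph,
\[ h^\dagger_{\gamma^\dagger}=\sum_{m=1}^{p-1}\gamma^\dagger_m h_m+\gamma^\dagger_p\cdot 0+\sum_{m=p+1}^{N}\gamma^\dagger_m h_{m-1}=\sum_{m=1}^{N-1}\gamma_m h_m=h_\gamma, \]
so the functions produced by $T$ and by $T^\dagger$ coincide, and the general case follows by inserting points one at a time. The step I expect to be the main obstacle is exactly this bookkeeping for the columns of $M^\dagger$ — carrying the correction terms of Definition~\ref{M} through the re-indexing near the indices $p-1,p,p+1$ — but, unlike in Proposition~\ref{p:partition}, here no affine branch is actually modified, only relabelled, so there is no Cramer-type inversion to perform.
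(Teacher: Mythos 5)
Your proposal is correct and follows essentially the same route as the paper's proof: reduce to inserting one point, observe that the new column of $M^{\dagger}$ is zero except for the entries $S_p$ and $-S_p$ while the rows $p$ and $p+1$ split the old $p$-th row (so summing those two equations recovers the old one, and the extra coordinate $\gamma^{\dagger}_p$ is solved for from the $p$-th row), and note that the extra basic function $h^\dagger_p$ vanishes identically because $\hat a_{p,\ell}=\hat b_{p,\ell}$ and $\hat k_{p,\ell}=\hat k_{p+1,\ell}$, so the densities coincide. The appeal to Proposition~\ref{p:partition} for the side on which $z^{\dagger}$ lies and the one-point-at-a-time induction also match the paper.
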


\begin{proof}
Let $Z^{\dagger}:=\{z_1^{\dagger}, \ldots ,z_s^{\dagger}\}$. By introducing these extra points the fundamental matrix $M^\dagger$ of $T^\dagger$ becomes an $(N+s) \times (N+s-1)$ matrix. It is possible to construct this matrix from $M$ in $s$ steps
$$M \rightarrow M^{\dagger}_1 \rightarrow M^{\dagger}_2 \rightarrow \cdots \rightarrow M^{\dagger}_s=M^{\dagger} ,$$ 
by adding one of the points from $Z^{\dagger}$ to the partition of $T$ at a time. All of these steps work in exactly the same way, so it is enough to prove the result for $s=1$. Therefore, assume $ Z^{\dagger}= \{z^{\dagger}\}$. There is an $1 \le i \le N$ such that $z^{\dagger}$ splits the interval $I_i$ into two subintervals, say $I_i^L$ and $I_i^R$. By Proposition \ref{p:partition}, it is irrelevant whether $z^{\dagger} \in I_i^L$ or $z^{\dagger} \in I_i^R$. By construction, $z^\dagger$ is a continuity point of $T^{\dagger}=T$, so
$$ a_{i,j}^{\dagger}=b_{i,j}^{\dagger} =k_{i,j} z^{\dagger} + d_{i,j},$$
and for each $n$ we have
$$\sum_{j \in \Omega} \bigg[ \frac{p_j}{k_{i,j}}\KI_n(a_{i,j}^{\dagger})-\frac{p_j}{k_{i,j}}\KI_n(b_{i,j}^{\dagger}) \bigg]=0.$$
Therefore $M^{\dagger}$ has, with respect to $M$, an extra column at the $i$th position, whose entries are all zeroes except for the diagonal and subdiagonal entries, which are given by $ \sum_{j \in \Omega} \frac{p_j}{k_{i,j}}$ and $ -\sum_{j\in \Omega} \frac{p_j}{k_{i,j}}$, respectively. Moreover, the $i$th and $(i+1)$th row of $M^\dagger$ are obtained by splitting the $i$th row of $M$ into two, such that $\KI_i(a_{n,j})=\KI^\dagger_i(a_{n,j})+\KI^\dagger_{i+1}(a_{n,j})$ for all $n$, and analogously for $b_{n,j}$.

\vskip .2cm
The null space of $M^\dagger$ equals the null space of the $(N+1) \times N$ matrix $A$ obtained from $M^\dagger$ by replacing the $(i+1)$th row by the sum of the $i$th and the $(i+1)$th row. Then all the entries of the $i$th column of $A$ are 0 except for the diagonal entry, and the matrix $M$ appears as a submatrix of $A$, by deleting the $i$th column and the $i$th row. Hence, any solution $\gamma$ of $M \gamma=0$ can be transformed in a solution $\gamma^\dagger$ of $M^\dagger \gamma^\dagger=0$ by setting $\gamma^\dagger_j = \gamma_j$ for $j \neq i$ and by using the relation $\sum_{j=1}^N A_{i,j} \gamma^\dagger_j=0$ for $\gamma_i^\dagger$. This gives the first part of the lemma.

\vskip .2cm

Finally, for corresponding solutions $\gamma$ and $\gamma^\dagger$ the associated densities $h_\gamma$ and $h^\dagger_{\gamma^\dagger}$ coincide, since 
\[ \sum_{j \in \Omega} \bigg[ \frac{p_j}{k_{i,j}} L_{a_{i,j}^{\dagger}}(x)-\frac{p_j}{k_{i,j}} L_{b_{i,j}^{\dagger}}(x) \bigg]=0.
\]
\end{proof}

The next theorem says that in case all maps $T_j$ are expanding, Theorem~\ref{thmm} in fact produces all absolutely continuous invariant measures for the system $T$.

\begin{thm}\label{t:alldensities}
Let $\Omega \subseteq \mathbb N$ and let $T$ be a random piecewise linear system satisfying assumptions (A1), (A3), (A4) and (A5). Assume furthermore that $|k_{i,j}| > 1$ for each $j \in \Omega$ and $1 \le i \le N$. An $L^1(\lambda)$-function $h$ is an invariant function for the random system $T$ if and only if $h=h_\gamma$ for some solution $\gamma$ of the system $M \gamma=0$.
\end{thm}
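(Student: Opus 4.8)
The plan is to prove the two inclusions separately; the ``if'' direction is exactly Theorem~\ref{thmm}, so the substance is to show that $\gamma\mapsto h_\gamma$ maps \emph{onto} the space $V$ of $T$-invariant $L^1(\lambda)$-functions and is \emph{injective}. First I would record the reduction that makes all the earlier machinery available: under the hypothesis $|k_{i,j}|>1$ the system is automatically expanding on average, since by (A1) there are only finitely many intervals $I_i$ and for each of them $\sum_{j\in\Omega}p_j/|k_{i,j}|<\sum_{j\in\Omega}p_j=1$, so one may take $\rho:=\max_{1\le i\le N}\sum_{j\in\Omega}p_j/|k_{i,j}|<1$ and (A2) holds. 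Hence Theorem~\ref{thmm} applies, $\gamma\mapsto h_\gamma$ is a well-defined linear map $\ker M\to V$, and all the structure recalled in Section~\ref{s:preliminaries} is in force: $V$ is finite-dimensional with a base $\{v_1,\dots,v_r\}$ of invariant densities whose supports $U_i$ are mutually disjoint, forward invariant in the sense of \eqref{q:forward}, and minimal. By linearity it suffices to produce, for each base element $v=v_i$, a vector $\gamma$ with $M\gamma=0$ and $h_\gamma=v$, and to show separately that $h_\gamma\equiv0$ forces $\gamma=0$.

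For surjectivity the uniform expansion (not merely expansion on average) enters twice. Using a Lasota--Yorke estimate applied to the continuous part of a bounded-variation function --- each branch contracts the variation of a continuous function by a factor $\sum_{j}p_j/|k_{i,j}|\le\rho$ --- one shows that the continuous part of $v$ has zero variation, i.e. $v$ is, off a $\lambda$-null set, a step function; and the same mechanism (the expansion factor along any periodic orbit exceeds $1$ in absolute value) shows that the jumps of $v$ lie in the forward orbits of the critical-point images $a_{m,\ell},b_{m,\ell}$, together with the (trivial, by (A4)) orbits of $0$ and $1$. Moreover $U_i$ is then mod $\lambda$ a \emph{finite} union of intervals; adjoining its finitely many endpoints to $z_0,\dots,z_N$, we invoke Lemma~\ref{l:partition} (and Proposition~\ref{p:partition} for how the new points are distributed between neighbouring intervals) to assume without loss of generality that $U_i$ is a union of partition intervals. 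One then reads off a candidate $\gamma$ from the jump data of $v$ near the partition points --- concretely $\gamma_m$ is extracted from the portion of the jump of $v$ at $a_{m,\ell}$ not produced by returns of earlier orbit points --- so as to undo the construction of $h_\gamma$ in \eqref{h}--\eqref{q:hgamma}. With this choice the identity $P_Tv=v$ becomes precisely the linear system $M\gamma=0$ (the computation in the proof of Theorem~\ref{thmm}, read in reverse, with Lemma~\ref{l:orth}), and since $v-h_\gamma$ is again $P_T$-invariant, is a step function, and has no jumps at the partition points by construction, propagating through $P_T(v-h_\gamma)=v-h_\gamma$ shows it has no jumps anywhere and vanishing continuous part, hence $v=h_\gamma$ $\lambda$-a.e.

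Injectivity is obtained in the same spirit: if $h_\gamma\equiv0$ then, $h_\gamma$ being a step function, all its jumps vanish, and inspecting these jumps at the partition points of a sufficiently refined partition --- organised by the bookkeeping of Lemma~\ref{l:orth} --- forces $\gamma=0$; equivalently, the read-off map $v\mapsto\gamma$ of the previous paragraph is a left inverse of $\gamma\mapsto h_\gamma$. Together with surjectivity this yields that $\gamma\mapsto h_\gamma$ is a linear bijection of $\ker M$ onto $V$. The main obstacle is the structural input that each ergodic invariant density is a step function supported on a finite union of intervals with endpoints in the critical orbit: this is exactly the place where uniform expansion is indispensable and where Lemma~\ref{l:partition} becomes applicable, and it is the only genuinely non-routine point --- the subsequent jump-matching and propagation are notation-heavy but mechanical once the partition has been refined.
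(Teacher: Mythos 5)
Your reduction of the hypotheses to (A2) (finitely many intervals $I_i$ by (A1), each $\sum_{j}p_j/|k_{i,j}|<1$, take the maximum) and your injectivity argument are fine and are essentially the paper's: the one-sided limit of $h_\gamma$ at $z_\ell$ equals $\gamma_\ell$, via $\lim_{x\downarrow z_\ell}L_y(x)=\KB_\ell(y)$ and Lemma~\ref{l:orth} (this is the computation \eqref{q:limitzi}), so $h_\gamma=0$ forces $\gamma=0$. The genuine gap is in surjectivity. Your central structural claim --- that every $T$-invariant density is ($\lambda$-a.e.) a step function whose jumps lie on the critical orbits, whence its support is a finite union of intervals --- is both unproved and, as stated, false: the invariant densities of the systems covered by this theorem are typically infinite sums of indicator functions; for instance the density of the random $\beta$-transformation computed in Section~\ref{sec5} is $\frac{c}{2\beta}\sum_{t\ge0}\sum_{\omega\in\{0,1\}^t}(2\beta)^{-t}\bigl(\mathbf 1_{[0,T_\omega(1))}+\mathbf 1_{[T_\omega(\frac{2-\beta}{\beta-1}),\frac{1}{\beta-1}]}\bigr)$, which for typical $\beta$ has infinitely many discontinuities and is not a step function. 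Your proposed mechanism also cannot deliver the claim: $P_T$ does not map continuous functions to continuous functions (each branch truncates at the endpoints of $T_{i,j}(I_i)$ and so creates new jumps), so the continuous/jump decomposition is not preserved, and the Lasota--Yorke estimate only gives $\mathrm{Var}(P_Tf)\le\rho\,\mathrm{Var}(f)+C\|f\|_{1}$, not a contraction of the continuous part. What the theorem actually needs --- and all it needs --- is the weaker statement that the \emph{support} of each basis density is a finite union of intervals. That is Lemma~\ref{l:finite} (following Boyarsky--G\'ora--Islam), proved by an entirely different argument: forward invariance \eqref{q:forward} of the support together with the fact that uniformly expanding branches strictly enlarge any component not containing a partition point; this is exactly where $|k_{i,j}|>1$ enters, and it cannot be extracted from bounded-variation structure alone.

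The second missing piece is how a vector $\gamma$ with $h_\gamma=v$ is actually produced. You assert that $\gamma$ can be read off from the jump data of $v$ and that ``$P_Tv=v$ becomes precisely $M\gamma=0$'' by reversing the computation of Theorem~\ref{thmm}, but no argument is given, and your concluding step (that $v-h_\gamma$ is an invariant step function with no jumps at partition points, hence zero) again rests on the false step-function premise. The paper circumvents jump-matching altogether: after adjusting the partition at the points of $Z_3$ (Proposition~\ref{p:partition}) and adjoining the finitely many boundary points of $U=\supp(v)$ as new partition points (Lemma~\ref{l:partition}), it introduces an auxiliary random system $T_U$ which agrees with $T$ on $U$ and whose branches are all equal and onto on each component of $[0,1]\setminus U$. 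Theorem~\ref{thmm} applied to $T_U$ gives a nontrivial $\gamma_U$ whose function $h_U$ vanishes off $U$; minimality of the support $U$ (from the lattice structure recalled in Section~\ref{s:preliminaries}) identifies $h_U$ with $v$; one then checks $\hat M^\dagger\gamma_U=0$, using $\gamma_{U,i}=0$ for $z_i\in Z_2\cup Z_3\cup Z_4\cup Z^\dagger$ (again via \eqref{q:limitzi}) and the fact that orbits of $a_{i,j},b_{i,j}$ with $z_i\in Z_1$ remain in $U$, and finally transports the vector back to the null space of $M$ by Lemma~\ref{l:partition} and Proposition~\ref{p:partition}. If you wish to keep your ``read off the jumps and reverse the computation'' strategy, you would have to prove the discontinuity-propagation and representation statements you are assuming; as written, the proposal does not establish surjectivity.
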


An essential ingredient in the proof of this theorem is the extension of a result by Boyarksy, G\'ora and Islam from \cite{BGI06} given in the next lemma. \cite[Theorem 3.6]{BGI06} states that in case we have a random system consisting of two maps that are both expanding, the supports of the invariant densities of $T$ are a finite union of intervals. As the next lemma shows, this result in fact goes through for any finite or countable number of maps with only a small change in the proof. In case of piecewise linear maps, some small steps can be simplified a bit. We have included the proof for the convenience of the reader.

\begin{lem}[cf.~Lemma 3.4 and Theorem 3.6 from \cite{BGI06}]\label{l:finite}
Let $\Omega \subseteq \mathbb N$ and let $T$ be a random system of piecewise linear maps satisfying (A1) and such that for each $j \in \Omega$ the map $T_j$ is expanding, i.e., it satisfies $|k_{i,j}|>1$ for all $1\le i \le N$. If $h$ is a $T$-invariant density, then the support of $h$ is a finite union of open intervals.
\end{lem}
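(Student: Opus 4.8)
The plan is to reduce the statement to a uniform lower bound on $h$ and then to follow the strategy of \cite[Lemma 3.4 and Theorem 3.6]{BGI06}. By the quasi-compactness of $P_T$ on the functions of bounded variation recalled in Section~\ref{s:preliminaries}, we may write $h = \sum_{i=1}^r c_i v_i$ with $\{v_1,\dots,v_r\}$ the finite basis of $T$-invariant densities; since $h\ge 0$ and the supports $U_i$ are mutually disjoint, each $c_i\ge 0$ and $\supp(h) = \bigcup_{i:\, c_i \neq 0} U_i$, so it suffices to prove the claim when $h$ equals a single basis element $v_i$. Thus we may assume the support $U = \{h > 0\}$ is forward invariant in the sense of \eqref{q:forward} and contains no proper forward invariant subset of positive measure.

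The heart of the argument is to show that $c := \operatorname{ess\,inf}_U h > 0$, following \cite[Lemma 3.4]{BGI06}. Quasi-compactness of $P_T$ on bounded variation (from \cite{In}; it rests on a Lasota--Yorke inequality, which under (A1) and $|k_{i,j}| > 1$ is elementary) gives uniform bounds $\|h\|_\infty \le B < \infty$ and $\sup_n \|P_T^n \mathbf 1\|_\infty \le D < \infty$. Suppose for contradiction that $c = 0$, so that $V_\epsilon := \{0 < h \le \epsilon\}$ has positive measure for every $\epsilon > 0$. In the identity $h = P_T^n h$, separate the preimages landing in $V_\epsilon$ from those landing in $\{h > \epsilon\}$, the preimages in $U^c$ contributing nothing: the $V_\epsilon$-part is at most $\epsilon\, P_T^n \mathbf 1 \le \epsilon D$, so for $\lambda$-a.e.\ $x \in U$ we get $h(x) \ge \epsilon\,\bigl(P_T^n \mathbf 1_{\{h > \epsilon\}}\bigr)(x) - \epsilon D$. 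Iterating \eqref{q:forward} shows $\bigcup_{\omega \in \Omega^n} T_\omega(U) = U$ up to a null set, and combined with the minimality of $U$ this lets one choose $n$ for which $P_T^n \mathbf 1_{\{h > \epsilon\}}$ is bounded below by a positive constant on $U$. Here the piecewise-linearity makes the distortion estimate \cite[Lemma 3.5]{BGI06} trivial (the simplification mentioned in the text), and the passage from two to countably many maps is the minor change. Taking $\epsilon$ small enough then contradicts $\operatorname{ess\,inf}_U h = 0$, so $c > 0$.

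Finally, pass to a lower semicontinuous representative of $h$, for which $U = \{h > 0\}$ is open, hence a countable disjoint union of open intervals. At each endpoint $p \in (0,1)$ of a component of $U$, the one-sided limit of $h$ from the side lying in $U$ is at least $c$, while from the other side it equals $0$; hence $h$ has a jump of size at least $c$ at $p$. A function of bounded variation has only finitely many jumps of size at least $c$ (at most $\operatorname{Var}(h)/c$ of them), so $U$ has at most $\operatorname{Var}(h)/c + 1$ components and is a finite union of open intervals.

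The step I expect to be the main obstacle is the middle one: upgrading the $\lambda$-a.e.\ positivity of $h$ on $U$ to a genuine uniform lower bound. Because $\{h > \epsilon\}$ is not itself forward invariant, one must combine the control of $\|h\|_\infty$ and $\sup_n \|P_T^n \mathbf 1\|_\infty$ with the expansion and the minimality of $U$ to spread the positivity of $h$ across all of $U$ --- exactly the content of \cite[Lemma 3.4]{BGI06}, whose proof we would reproduce with the two modifications indicated above.
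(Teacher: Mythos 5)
The reduction to a single basis element and the closing jump-counting argument (a BV function with $h\ge c>0$ a.e.\ on its open support and $h=0$ outside can have at most $\operatorname{Var}(h)/c$ components of support) are both fine. The genuine gap is the middle step, which you yourself flag: the claim that forward invariance \eqref{q:forward} and minimality of $U$ ``let one choose $n$ for which $P_T^n\mathbf 1_{\{h>\epsilon\}}$ is bounded below by a positive constant on $U$'' is not an argument but a restatement of the lower-bound theorem of \cite{BGI06} that you are trying to use. Invariance and minimality concern $U$ itself, not the smaller set $\{h>\epsilon\}$; to get a uniform bound one must show that the images of $\{h>\epsilon\}$ under words $\omega\in\Omega^n$ cover $U$ up to a $\lambda$-null set \emph{with the weights $p_{\omega}/|k_{\omega}|$ of the covering branches bounded below}, and for countable $\Omega$ (probabilities merely summable, slopes possibly unbounded) this selection of finitely many effective words is exactly where the work lies, so calling the passage from two to countably many maps ``the minor change'' is an assertion, not a proof. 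There is also a bookkeeping slip: keeping the term $-\epsilon D$ makes your final inequality $h\ge\epsilon(\beta-D)$ useless unless the unproven constant $\beta$ exceeds $D$; since both parts of $P_T^nh$ are nonnegative you should simply drop the $V_\epsilon$-part and get $h\ge\epsilon\beta$ on $U$, at which point no contradiction argument is needed --- but the burden of producing $\beta>0$ remains, and it is not discharged.

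It is also worth noting that the paper's own proof avoids the lower bound on $h$ entirely, so your route is genuinely different and strictly heavier. The paper orders the components $U_k$ of the (lower semicontinuous) support by length, observes that some component must contain one of the finitely many partition points $z_i$ (otherwise the image of the longest component would be a strictly longer interval contained in some component), takes $J$ to be the smallest of the finitely many sets $U_k\cap I_n$ over such components, lets $S$ be the union of the finitely many components with $\lambda(U_k)\ge\lambda(J)$, shows $T_j(S)\subseteq S$ by the same expansion argument, and finally uses invariance of $\mu_{\mathbf p}$ to get $\mu_{\mathbf p}(T_j^{-1}(S)\setminus S)=0$ and hence $U\subseteq S$. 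This uses only expansion, the finiteness of the set of partition points, \eqref{q:forward} and \eqref{q:invdensity}. To complete your proposal you would have to actually reprove the \cite{BGI06} lower-bound lemma for countably many piecewise linear maps; alternatively, replace the middle step by the direct length argument above, which is what the paper does.
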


\begin{proof}
Let $H = \{ v_1, \ldots, v_r\}$ be the base of the subspace of $L^1(\lambda)$ of $T$-invariant functions, consisting of density functions of bounded variation, mentioned in Section~\ref{s:preliminaries}. Since any invariant function $h$ for $T$ can be written as $h= \sum_{n=1}^{r} c_n v_n$ for some constants $c_n \in \mathbb{R}$, it is enough to prove the result for elements in $H$. Therefore, let $h \in H$ and let $U:= \supp(h)$ denote the support of $h$. Since $h$ is a function of bounded variation, we can take $h$ to be lower semicontinuous and $U$ can be written as a countable union of open intervals, each separated by an interval of positive length:  $U = \bigcup_{k \ge 1} U_k$. Assume without loss of generality that $\lambda (U_{k+1}) \le \lambda(U_k)$ for each $k \ge 1$. Let $Z:=\{z_1, \ldots, z_{N-1}\}$ and let $\mathcal D$ be the set of indices $k$, such that $U_k$ contains one of the points $z \in Z$, i.e., 
\[ \mathcal D = \{ k \ge 1 \, | \, \exists \, z\in Z \, : \, z \in U_k \}.\]
We first show that $\mathcal D \neq \emptyset$ by proving that $Z \cap U_1 \neq \emptyset$. Suppose on the contrary that $U_1$ does not contain a point $z$, then for each $j \in \Omega$, $T_j (U_1)$ is an interval and since each $T_j$ is expanding, we have $\lambda (T_j(U_1)) > \lambda(U_1)$. By the property from \eqref{q:forward} that $U$ is forward invariant, we know that $T_j(U_1) \subseteq U$ for each $j$, so it must be contained in one of the intervals $U_k$. This gives a contradiction.

\vskip .2cm
Now, let $J$ be the smallest interval in the set
\[ \{ U_k \cap I_n \, : \, k \in \mathcal D, \, 1 \le n \le N \}.\]
Note that this is a finite set, since $Z$ and $\mathcal D$ are both finite. Moreover, by the above this set is not empty, so $J$ exists. Since each $U_k$ is an open interval, we have $\lambda(J)>0$. Let $\mathcal F = \{ k \ge 1 \, : \, \lambda(U_k) \ge \lambda (J) \}$, where $k$ is not necessarily in $J$, and let $S = \bigcup_{k \in \mathcal F} U_k$. Since any connected component $U_k$ of $S$ has Lebesgue measure bigger than $\lambda(J)$, $S$ is a finite union of open intervals. We first prove that $T_j (S) \subseteq S$ for any $j \in \Omega$. Let $U_k \subseteq S$ and suppose first that $k \not \in \mathcal D$. Then for each $j \in \Omega$, as above $T_j(U_k)$ is an interval with $\lambda(T_j(U_k))> \lambda(U_k) \ge \lambda(J)$. So, $T_j(U_k)$ is contained in another interval $U_i$ that satisfies $\lambda(U_i) > \lambda(J)$ and thus satisfies $U_i \subseteq S$. Hence, $T_j(U_k) \subseteq S$. If, on the other hand, $k \in \mathcal D$, then $T_j(U_k)$ consists of a finite union of intervals and since $T_j$ is expanding, the Lebesgue measure of each of these intervals exceeds $\lambda(J)$. Hence, each of the connected components of $T_j(U_k)$ is contained in some interval $U_i$ that satisfies $\lambda(U_i) > \lambda(J)$ and therefore $U_i \subseteq S$. Hence, also in this case $T_j(U_k) \subseteq S$, implying that $T_j(S) \subseteq S$ for all $j \in \Omega$.

\vskip .2cm
Obviously, $S \subseteq U$. Using the fact that $T_j(S) \subseteq S$ for all $j \in \Omega$, we will now show that $U \subseteq S$. Suppose this is not the case and let $U_s$ be the largest interval in $U \setminus S$. Since $U_k \subseteq S$ for any $k \in \mathcal D$, we have $s \not \in \mathcal D$. So, again, for each $j \in \Omega$ the set $T_j(U_s)$ is an interval with $\lambda(T_j(U_s))> \lambda (U_s)$ and hence, $T_j(U_s) \subseteq S$. Thus $U_s \subseteq T_j^{-1}(S)$ and since $U_s \not \subseteq S$, we have $U_s \subseteq T_j^{-1}(S) \setminus S$. Let $\mu_{\mathbf p}$ be the absolutely continuous $T$-invariant measure with density $h$. We show that $\mu_{\mathbf p} (T_j^{-1}(S) \setminus S)=0$. Since for each $j \in \Omega$ we have
\[ S \subseteq T_j^{-1} (T_j(S)) \subseteq T_j^{-1}(S),\]
we obtain from \eqref{q:invdensity} that
\[ \begin{split}
0 =\ & \mu_{\mathbf p} (S) - \mu_{\mathbf p} (S) = \sum_{j \in \Omega} p_j \mu_{\mathbf p} (T_j^{-1}(S)) - \sum_{j \in \Omega} p_j \mu_{\mathbf p}(S)\\
=\ & \sum_{j \in \Omega} p_j (\mu_{\mathbf p}(T^{-1}_j(S))-\mu_{\mathbf p}(S)) = \sum_{j \in \Omega} p_j \mu_{\mathbf p} (T_j^{-1}(S)\setminus S).
\end{split}\]
Since $p_j >0$ for all $j$, we have that $\mu_{\mathbf p} (T_j^{-1}(S)\setminus S)=0$ for each $j$. Hence, $\mu_{\mathbf p}(U_s)=0$, which contradicts the fact that $U_s \subseteq U$.
\end{proof}

\begin{nrem}
The article \cite{BGI06} contains an example that shows that the previous lemma is not necessarily true if we drop the assumption that all maps $T_j$ are expanding. In \cite[Example 3.7]{BGI06} the authors describe a random system $T$ using an expanding and a non-expanding map, of which for a certain probability vector $\mathbf p$ the support of the invariant density is a countable union of intervals. The fact that the supports of the elements from $H$ are finite unions of open intervals plays an essential role in the proof of Theorem~\ref{t:alldensities} as we shall see now.
\end{nrem}

\begin{proof}[Proof of Theorem~\ref{t:alldensities}]
We will show that the linear mapping from the null space of $M$ to the subspace of $L^1(\lambda)$ of all $T$-invariant functions is a linear isomorphism. Let $H=\{v_1, \ldots, v_r\}$ again be the basis of density functions of bounded variation, whose corresponding measures are ergodic, for the subspace of $T$-invariant $L^1(\lambda)$-functions mentioned in Section~\ref{s:preliminaries}. Recall that any invariant function $h$ for $T$ can be written as $h= \sum_{n=1}^{r} c_n v_n$ for some constants $c_n \in \mathbb{R}$.

\vskip .2cm
The injectivity follows from the proof of Theorem~\ref{thmm}, where we showed that $h_\gamma=0$ implies $\gamma=0$. We prove surjectivity by providing for each $h \in H$ a vector $\gamma$ such that $h_{\gamma}=h$. We will do this by altering $T$ in several steps, so that we finally obtain a system $T_U$ that has a vector $\gamma_U$ associated to it for which the corresponding density $h^U_{\gamma_U}$ vanishes outside the support $U$ of $h$. Then, using Proposition~\ref{p:partition} and Lemma~\ref{l:partition} we transform the solution $\gamma_U$ to a solution $\gamma$ for $T$ that produces the original density $h$.

\vskip .2cm
Fix $h \in H$, and let $U:= \supp(h)$. Let $Z=\{z_1,\ldots ,z_{N-1}\}$ again be the set of critical points of the system. Following \cite[Theorem 2]{Ko}, we classify the points in $Z$ as follows:
\[ \begin{split}
Z_1:= \ & \{z_i \in Z \, | \, z_i \text{ is in the interior of } U\},\\
Z_2:=\ & \{z_i \in Z \, | \, z_i \text{ is a left (right) endpoint of a subinterval of } U \text{ and }  z_i \in I_{i+1} \, (z_i \in I_i)\},\\
Z_3:=\ & \{z_i \in Z \, | \, z_i \text{ is a left (right) endpoint of a subinterval of } U \text{ and } z_i \in I_{i} \, (z_i \in I_{i+1}) \},\\
Z_4:= \ & \{z_i \in Z \, | \, z_i \text{ is an exterior point for } U\}.
\end{split}\]
We now modify the partition $\{I_i\}_{1 \le i \le N}$ on the points in $Z_3$, so that it corresponds better to the set $U$. Let $\{\hat I_i \}_{1 \le i \le N}$ be a partition of $[0,1]$ given by $z_0, \ldots, z_{N}$ and differing from $\{I_i\}_{1 \le i \le N}$ only for $z_i \in Z_3$, i.e., $z_i \in \hat{I}_i$ if and only if $z_i \notin I_i$. Let $\hat T$ be the corresponding random system, i.e., $\hat T(x) = T(x)$ for all $x \not \in Z_3$. By Proposition \ref{p:partition}, the corresponding matrices $M$ and $\hat{M}$ have vectors in their null spaces that differ only on the entries $i$ for which $z_i \in Z_3$, but such that they define the same density.

\vskip .2cm
There might be boundary points of $U$ that are not in $Z$. Let $Z^{\dagger}$ be the set of such points. From Lemma~\ref{l:finite} it follows that $U$ is a finite union of open intervals, so the set $Z^\dagger$ is finite. Consider the partition $\{ \hat I^\dagger_i \}$ given by the points in $Z \cup Z^\dagger$ and let $\hat{T}^{\dagger}$ be the system with this partition and given by $\hat{T}^{\dagger}(x)=\hat T(x) $ for all $x$. By Lemma \ref{l:partition}, the corresponding matrices $\hat{M}$ and $\hat{M}^{\dagger}$ have vectors in their null spaces that differ only on the extra entries corresponding to points $z^{\dagger} \in Z^{\dagger}$, but such that they define the same density. 

\vskip .2cm
Define a new piecewise linear random system $T_U$ by modifying $\hat T^\dagger$ outside of  $U$. To be more precise, we let $T_U(x)=\hat{T}^{\dagger}(x)$ for all $x \in U$ and on each connected component of $[0,1] \setminus U$ we assume all maps $T_{U,j}$ to be equal and onto, i.e., mapping the interval onto $[0,1]$. Recall from \eqref{q:forward} that the set $U$ is forward invariant under $T$. Then any invariant function of $T_U$ vanishes on $[0,1] \setminus U$ $\lambda$-almost everywhere, since the set of points $x \in [0,1] \setminus U$, such that $T^n(x) \in [0,1] \setminus U$ for all $n \ge 0$ is a self-similar set of Hausdorff dimension less than $1$. From Theorem \ref{thmm} we get a non-trivial solution $\gamma_U$ of $M_U \gamma_U=0$ with a corresponding function $h_U$ that vanishes on $[0,1] \setminus U$. Since $\hat T$ and $T_U$ coincide on $U$, the function $h_U$ is also invariant for $\hat T$ and hence for $T$. From the fact that $U$ is the support of one of the densities in the basis $H$ and $\supp(h_U)\subseteq U$, we then conclude that $h_U=h$, up to possibly a set of Lebesgue measure 0.

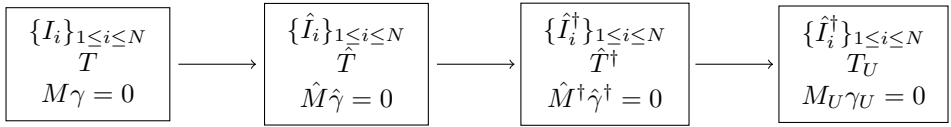
\begin{figure}
\centering
\begin{tikzpicture}[scale=1.7]
\draw[->](.8,0)--(1.2,0);
\draw[->](2.8,0)--(3.2,0);
\draw[->](4.8,0)--(5.2,0);

\node [draw] at (0,0) {
\begin{tabular}{c}
$\{I_i\}_{1 \le i \le N}$ \\  
$T$ \\
 $M\gamma=0$
 \end{tabular}};

\node [draw] at (2,0) {
\begin{tabular}{c}
$\{\hat I_i\}_{1 \le i \le N}$ \\
$\hat T$ \\
$\hat M \hat \gamma=0$
\end{tabular}};

\node [draw] at (4,0) {
\begin{tabular}{c}
$\{\hat I_i^\dagger \}_{1 \le i \le N}$\\
$\hat T^\dagger$\\
$\hat M^\dagger \hat \gamma^\dagger=0$
\end{tabular}};

\node [draw] at (6,0) {
\begin{tabular}{c}
$\{\hat I_i^\dagger\}_{1 \le i \le N}$\\
$T_U$\\
$M_U \gamma_U=0$
\end{tabular}};

\end{tikzpicture}
\caption{The steps we take in transforming $T$ to $T_U$.}
\label{f:manyTs}
\end{figure}

\vskip .2cm
It remains to show that $ \gamma_U$ can be transformed into a vector from the null space of $M$, leading to the same density $h_U$. We first show that $\hat M^\dagger \gamma_U =0$. Note that for $z_i \in Z_4$, since $h_U$ is of bounded variation, 
\[ \lim_{x \uparrow z_i} h_U(x) = 0 = \lim_{x \downarrow z_i} h_U(x).\]
Hence, by the calculations in \eqref{q:limitzi} $\gamma_{U,i}=0$. Similarly, for $z_i \in Z_2 \cup Z_3$ we have that either $\lim_{x \uparrow z_i} h_U(x)=0$ or $\lim_{x \downarrow z_i} h_U(x)=0$, which again by the calculations in \eqref{q:limitzi} gives $\gamma_{U,i}=0$. Hence, $\gamma_{U,i}=0$ for each $i$ such that $z_i \in Z_2 \cup Z_3 \cup Z_4$. Similarly, $\gamma_{U,i}=0$ for each $i$ such that $z_i \in Z^\dagger$. In the multiplication $\hat M^\dagger \gamma_U$ the orbits of the points $a_{i,j}$ and $b_{i,j}$ which are different under $\hat T^\dagger$ and $T_U$ are multiplied by 0. Since $U$ is forward invariant, all orbits of points $a_{i,j}$ and $b_{i,j}$ corresponding to $i$ such that $z_i \in Z_1$ will stay in $U$ and will thus be equal under $\hat T^\dagger$ and $T_U$. These facts imply that also $\hat M^\dagger \gamma_U=0$ and that the corresponding invariant density for $\hat T^\dagger$ is again $h_U$.

\vskip .2cm
From Lemma~\ref{l:partition} it follows that there is a vector $\hat \gamma$ in the null space of $\hat M$ with $\hat h_{\hat \gamma} = h_U$. Finally, Proposition~\ref{p:partition} then tells us how we can modify $\hat \gamma$ to get a vector $\gamma$ in the null space of $M$ with $h_\gamma = \hat h_{\hat \gamma} = h_U =h$.
\end{proof}

\section{Examples}
In this section we apply Theorems~\ref{thmm} and \ref{t:alldensities} to various examples.

\subsection{Random tent maps.}\label{s:tent}
For any countable set of slopes $\{k_j\}_{j \in \Omega}$ with $k_j \in (0,2)$ for each $j$, consider the family $T:=\{T_j\}_{j \in \Omega}$, where each $T_j$ is a tent map of slope $k_j$, i.e., $T_j:[0,1] \rightarrow [0,1]$ is given by
$$T_{j}(x)=\begin{cases}
k_j x, & \text{if } x \in [0,1/2],\\
k_j-k_j x, & \text{if } x \in (1/2,1],
\end{cases}$$
see Figure~\ref{f:tent}(a). So, (A1) and (A4) hold.
\begin{figure}[h!]
 \centering
\subfigure[Countably many tent maps.]{
 \begin{tikzpicture}[scale=3]
\draw[white](-.2,0)--(1.2,0);
\draw(0,0)node[below]{\small $0$}--(.5,0)node[below]{\small $\frac12$}--(1,0)node[below]{\small $1$}--(1,1)--(0,1)node[left]{\small $1$}--(0,0);

\draw[dotted](.5,0)--(.5,1);

\draw[dashed](0,0)--(1,1);

\draw[line width=0.3mm, bittersweet] (0,0)--(.5,1)(.5,1)--(1,0);
\draw[line width=0.3mm, bittersweet] (0,0)--(.5,.9)(.5,.9)--(1,0);
\draw[line width=0.3mm, bittersweet] (0,0)--(.5,.8)(.5,.8)--(1,0);
\draw[line width=0.3mm, bittersweet] (0,0)--(.5,.7)(.5,.7)--(1,0);
\draw[line width=0.3mm, bittersweet] (0,0)--(.5,.6)(.5,.6)--(1,0);
\draw[line width=0.3mm, bittersweet] (0,0)--(.5,.5)(.5,.5)--(1,0);
\draw[line width=0.3mm, bittersweet] (0,0)--(.5,.4)(.5,.4)--(1,0);
\draw[line width=0.3mm, bittersweet] (0,0)--(.5,.3)(.5,.3)--(1,0);
\draw[line width=0.3mm, bittersweet] (0,0)--(.5,.2)(.5,.2)--(1,0);
\draw[line width=0.3mm, bittersweet] (0,0)--(.5,.1)(.5,.1)--(1,0);
\draw[line width=0.3mm, bittersweet] (0,0)--(.5,.15)(.5,.15)--(1,0);
\draw[line width=0.3mm, bittersweet] (0,0)--(.5,.25)(.5,.25)--(1,0);
\draw[line width=0.3mm, bittersweet] (0,0)--(.5,.35)(.5,.35)--(1,0);
\draw[line width=0.3mm, bittersweet] (0,0)--(.5,.45)(.5,.45)--(1,0);
\draw[line width=0.3mm, bittersweet] (0,0)--(.5,.55)(.5,.55)--(1,0);
\draw[line width=0.3mm, bittersweet] (0,0)--(.5,.65)(.5,.65)--(1,0);
\draw[line width=0.3mm, bittersweet] (0,0)--(.5,.75)(.5,.75)--(1,0);
\draw[line width=0.3mm, bittersweet] (0,0)--(.5,.85)(.5,.85)--(1,0);
\draw[line width=0.3mm, bittersweet] (0,0)--(.5,.95)(.5,.95)--(1,0);
\draw[line width=0.3mm, bittersweet] (0,0)--(.5,.05)(.5,.05)--(1,0);
\end{tikzpicture}}
\hspace{0.5cm}
\subfigure[Two tent maps.]{
 \begin{tikzpicture}[scale=3]
\draw(0,0)node[below]{\small $0$}--(.5,0)node[below]{\small $\frac12$}--(1,0)node[below]{\small $1$}--(1,1)--(0,1)node[left]{\small $1$}--(0,0);

\draw[dotted](.5,0)--(.5,1);

\draw[dashed](0,0)--(1,1);

\draw[line width=0.3mm, bittersweet] (0,0)--(.5,.67)(.5,.67)--(1,0);
\draw[line width=0.3mm, bittersweet] (0,0)--(.5,.5)(.5,.5)--(1,0);
\end{tikzpicture}
}
\hspace{0.5cm}
\subfigure[Linear logistic maps.]{
 \begin{tikzpicture}[scale=3]
\draw(0,0)node[below]{\small $0$}--(.5,0)node[below]{\small $\frac12$}--(1,0)node[below]{\small $1$}--(1,1)--(0,1)node[left]{\small $1$}--(0,0);

\draw[dotted](.5,0)--(.5,1);

\draw[dashed](0,0)--(1,1);

\draw[line width=0.3mm, bittersweet] (0,0)--(.5,1)(.5,1)--(1,0);
\draw[line width=0.3mm, bittersweet] (0,0)--(.5,.5)(.5,.5)--(1,0);
\end{tikzpicture}
}
\caption{Random families of tent maps.}
\label{f:tent}
\end{figure}
Let $\mathbf p=(p_j)_{j \geq 0}$ be a probability vector such that $T$ is expanding on average, i.e. $\sum_{j \in \mathbb{N}} \frac{p_j}{k_j} < 1$, so (A2) holds. One easily verifies that then conditions (A3) and (A5) hold as well. For $N=2$ set 
$$z_0=0, \quad z_1=\frac12, \quad z_2=1,$$
and $I_1=[z_0, z_1]$, $I_2=(z_1,z_2]$. Since $z_1$ is the only critical point, the fundamental matrix $M$ is the null vector. As a consequence, we can choose $\gamma= 1$, to obtain the invariant density
$$h_{\gamma}= c \sum_{j \in \Omega} \frac{2p_j}{k_j} L_{k_j/2},$$
for some normalising constant $c$. If we set for each $j \in \mathbb N$ and $w \in \Omega^t$, $t \ge 0$,
\[ \ell_{\omega,j} = \# \Big\{ 1 \le n \le t \, : \, T_{\omega_1^{n-1}} \Big( \frac{k_j}{2} \Big) \in \Big( \frac12,1 \Big] \Big\},\]
then this becomes
\begin{equation}\label{q:tentdens}
h_\gamma =c \sum_{j \in \Omega} \frac{2p_j}{k_j} \sum_{t \ge 0} \sum_{\omega \in \Omega^t} (-1)^{\ell_{\omega,j}} \prod_{n=0}^t \frac{p_{\omega_n}}{k_{\omega_n}} {\mathbf 1}_{[0, T_\omega(\frac{k_j}{2}))}.
\end{equation}
If we assume that $k_j>1$ for all $j$, then it follows from Theorem~\ref{t:alldensities} that the density from \eqref{q:tentdens} is the unique invariant density for $(T,\mathbf p)$. If we do not assume this, then we can still draw the same conclusion in case there are only finitely many maps. Namely, to satisfy the condition (A2) there has to be at least one $j$ such that $k_j>1$. The existence and uniqueness of an absolutely continuous invariant measure for the map $T_j$ is then guaranteed by the results from \cite{LaYo,LiYo}. In case the set $\{k_j\}_{j \in \mathbb{N}}$ is finite, it then follows from \cite[Corollary 7]{Pe} that there is only one invariant density for $(T, \mathbf p)$.

\vskip .2cm
In \cite{AGH} the authors considered random combinations of logistic maps. In \cite[Theorem 4.2]{AGH} they proved that the random system $\{f_0, f_1\}$ with $f_0(x)=2x(1-x)$ and $f_1(x)=4x(1-x)$ has a $\sigma$-finite absolutely continuous invariant measure that is infinite in case the map $f_0$ is chosen with probability $p_0>\frac12$. The linear analogue of this system shows a different picture. Fix $a \in (1,2]$ and consider the random system with two maps $T_0(x)= \min\{x,1-x\}$ and $T_{a,1}(x) = \min\{ax, a-ax\}$. See Figure~\ref{f:tent}(b) for an example with $a=\frac43$. For any $p \in (0,1)$, set $p_0=p$ and $p_1=1-p$ and note that $p_0+\frac{p_1}{a}<1$. The assumptions (A1)-(A5) are then met and the random system $T=\{T_0,T_{a,1}\}$ has a finite absolutely continuous invariant measure for any such $p$. A straightforward computation yields $L_{\frac12}= \frac{1}{1-p} {\mathbf 1}_{[0,\frac12)} + \frac1a L_{\frac{a}{2}}$, so that up to a normalising constant, the unique absolutely continuous invariant density is then 
\begin{equation}\label{q:tentdens2}
 h_{\gamma,a} = \frac{2p}{1-p} {\mathbf 1}_{[0,\frac12)} + \frac{2}{a} L_{\frac{a}{2}}.
 \end{equation}
In particular, for $a=2$ as shown in Figure~\ref{f:tent}(c) we get
\[ h_{\gamma,2} = (1+p) {\mathbf 1}_{[0,\frac12]} + (1-p){\mathbf 1}_{(\frac12,1]}.\]
Note that for $p=1$ we have a deterministic, non-expanding interval map that does not satisfy the requirements from \cite{Ko}. However, the limit $\lim_{p \to 1} h_{\gamma,2} = 2 \cdot {\mathbf 1}_{[0,\frac12]}$ is an invariant density for the system. On the other hand, for a fixed $p \in (0,1)$ the limit $\lim_{a \to 1}h_{\gamma,a}$ is not an absolutely continuous measure. To see this, note that $h_{\gamma,a}$ is determined by the random orbits of $\frac{a}{2}$ and that $1-\frac{a}{2} \leq T_{\omega}(\frac{a}{2}) \leq \frac{a}{2}$ for any $\omega$. Hence, by \eqref{q:tentdens2} and the definition of the $L$-functions in \eqref{L} it follows that $h_{\gamma,a}=0$ on $(\frac{a}{2}, 1]$, while on $[0,1-\frac{a}{2})$ we have $h_{\gamma,a}= v $ on $[0,1-\frac{a}{2})$ for some constant $v \in \mathbb{R}$. For any point in $x \in [0,1-\frac{a}{2})$, the random Perron-Frobenius operator from \eqref{q:PF} now yields
\[ v = h_{\gamma,a}(x) = P_T h_{\gamma,a}(x)= pv+(1-p)\frac{v}{a},\]
which holds if and only if $v=0$. It follows that for any $a \in (1,2]$ and any $p \in (0,1)$, $\supp(h_{\gamma,a}) \subseteq [1-\frac{a}{2},\frac{a}{2}]$. As a consequence $\lim_{a \to 1} h_\gamma = \delta_{\frac12}$, where $\delta_{\frac12}$ is the Dirac delta function at $\frac12$.

\subsection{A random family of $W$-shaped maps.}
Keller introduced in \cite{Kel} a family of piecewise expanding $W$-maps to study the phenomenon of instability of absolutely continuous invariant measures. Later the stability of $W$-shaped maps was studied in other papers as well, see for example \cite{LGBPE,EM12}. Here we construct a random family of $W$-shaped maps, where each element of the collection is an expanding on average random map $W_a:=\{W_{a,0}, W_{a,1}\}$ defined on the unit interval. We give an absolutely continuous invariant probability measure.

\begin{figure}[h]
 \centering
\subfigure[{\color{bluebell}$W_{4}$}, {\color{babyblueeyes}$W_{8}$} and {\color{blush}$W_{\frac83}$}]{
 \begin{tikzpicture}[scale=3]
\draw(0,0)node[below]{\small $0$}--(.25,0)node[below]{\small $\frac14$}--(.375,0)node[below]{\small $\frac38$}--(.5,0)node[below]{\small $\frac12$}--(.75,0)node[below]{\small $\frac34$}--(.625,0)node[below]{\small $\frac58$}--(.125,0)node[below]{\small $\frac18$}--(.875,0)node[below]{\small $\frac78$}--(1,0)node[below]{\small $1$}--(1,1)--(0,1)node[left]{\small $1$}--(0,.25)node[left]{\small $\frac14$}--(0,.375)node[left]{\small $\frac38$}--(0,.625)node[left]{\small $\frac58$}--(0,.125)node[left]{\small $\frac18$}--(0,.875)node[left]{\small $\frac78$}--(0,.75)node[left]{\small $\frac34$}--(0,0);

\draw[dotted](.0,.375)--(.5,.375)(0,.625)--(.5,.625);
\draw[dotted](.375,0)--(.375,1)(.625,0)--(.625,1);

\draw[dotted](.0,.125)--(.5,.125)(0,.875)--(.5,.875);
\draw[dotted](.125,0)--(.125,1)(.875,0)--(.875,1);

\draw[dotted](.0,.25)--(.5,.25)(0,.75)--(.5,.75);
\draw[dotted](.25,0)--(.25,1)(.5,0)--(.5,1)(.75,0)--(.75,1);

\draw[dashed](0,0)--(1,1);

\draw[line width=0.3mm, bluebell] (0,1)--(.25,0)(.25,0)--(.5,.75)(.5,.75)--(.75,0)(.75,0)--(1,1);
\draw[line width=0.3mm, bluebell] (0,1)--(.25,0)(.25,0)--(.5,.25)(.5,.25)--(.75,0)(.75,0)--(1,1);

\draw[line width=0.3mm, blush] (0,1)--(.375,0)(.375,0)--(.5,.625)(.5,.625)--(.625,0)(.625,0)--(1,1);
\draw[line width=0.3mm, blush] (0,1)--(.375,0)(.375,0)--(.5,.375)(.5,.375)--(.625,0)(.625,0)--(1,1);

\draw[line width=0.3mm, babyblueeyes] (0,1)--(.125,0)(.125,0)--(.5,.875)(.5,.875)--(.875,0)(.875,0)--(1,1);
\draw[line width=0.3mm, babyblueeyes] (0,1)--(.125,0)(.125,0)--(.5,.125)(.5,.125)--(.875,0)(.875,0)--(1,1);

\end{tikzpicture}}
\hspace{1cm}
\subfigure[$W_2$]{
 \begin{tikzpicture}[scale=3]
\draw(0,0)node[below]{\small $0$}--(.5,0)node[below]{\small $\frac12$}--(1,0)node[below]{\small $1$}--(1,1)--(0,1)node[left]{\small $1$}--(0,0);

\filldraw[draw=darkcyan, fill=darkcyan] (.5,.5) circle (.4pt);
\draw[dotted](.5,0)--(.5,1);

\draw[dashed](0,0)--(1,1);

\draw[line width=0.3mm, darkcyan] (0,1)--(.5,0)(.5,0)--(1,1);

\end{tikzpicture}
}
\caption{Examples of random systems $W_a$ for various values of $a$.}
\label{f:Wa}
\end{figure}

\vskip .2cm
For $a > 2$, let $\Omega = \{0,1\}$ and $N=4$. Set 
$$z_0=0, \quad  z_1=1/a, \quad z_2=1/2, \quad z_3=(a-1)/a, \quad z_4=1$$ 
and 
$$I_1=[z_0, z_1], \quad I_2=( z_1, z_2], \quad I_3=(z_2, z_3), \quad I_4=[z_3,z_4].$$ 
Let
\[ W_{a,0}(x) = \begin{cases}
1-a x, & \text{if } x \in I_1, \\
\frac{2}{a-2} x- \frac{2}{(a-2)a} , & \text{if } x \in I_2, \\
W_{a,0}(1-x), & \text{otherwise},
\end{cases}
\ \text{ and } \
W_{a,1}(x) = \begin{cases}
1-a x, & \text{if } x \in I_1,\\
\frac{2(a-1)}{a-2} x -\frac{2(a-1)}{(a-2)a} , & \text{if } x \in I_2, \\
W_{a,1}(1-x), & \text{otherwise}.
\end{cases}
\]

For $a>4$ the map $W_{a,0}$ presents two contractive branches. Let $1> p > \frac{(a-4)(a-1)}{(a-2)^2}$ be arbitrary, and let $p_{a,0}=1-p$ and $p_{a,1}=p$. With this choice of probability vector the random map $W_a$ satisfies (A1)-(A5). The fundamental matrix $M$ is given by
$$M=\left(\begin{matrix}
\frac{1-a}{a^2}-\frac{C}{a} &\frac{p_{a0}(2-a)(a-1)}{a^2}+\frac{p_{a1}(2-a)}{a^2(a-1)} &  -\frac{C}{a}+\frac{1}{a^2} \\[8pt]
C & -C & 0 \\[8pt]
0 & - C & C \\[8pt]
\frac{1}{a^2(a-1)}-\frac{C}{a(a-1)} &\frac{p_{a0}(2-a)}{a^2(a-1)}-\frac{p_{a1}(2-a)(a^2-a-1)}{a^2(a-1)^2} &  -\frac{C}{a(a-1)}+\frac{1+a-a^2}{a^2(a-1)} \\
\end{matrix}\right)
$$
for some constant $C$. Its null space consists of all vectors of the form 
$$s\begin{pmatrix} 
1&
1&
1
\end{pmatrix}^\intercal, \quad s \in \mathbb R.$$
From
$$L_{0}= \frac{1}{1-a}, \qquad L_{\frac1a}= \frac{1}{a(a-1)} + \mathbf{1}_{[0, \frac1a]} \quad \text{and} \quad L_{\frac{a-1}{a}}= -\frac{1}{a(a-1)} + \mathbf{1}_{[0, \frac{a-1}{a}]},$$
we get the invariant density 
$$h_{a,p}= c \bigg[ ((a-1)-p(a-2)) \cdot \mathbf{1}_{[0, \frac1a)}+ \mathbf{1}_{[\frac1a, \frac{a-1}{a} ]}+ \bigg(1- p\frac{a-2}{a-1} \bigg) \cdot \mathbf{1}_{(\frac{a-1}{a}, 1]} \bigg],$$
for the normalising constant 
$$c= \frac{a(a-1)}{2(a-1)^2-pa(a-2)}.$$

Theorem~\ref{t:alldensities} implies that if $a<4$, then this is the unique absolutely continuous invariant density for $W_a$. Note that
\[ \lim_{a \to 2} h_{a,p}(x)= \frac{1}{2} \mathbf{1}_{[0, 1]}(x)  + \frac{1}{2} \delta_{\frac12}(x).\]
On the other hand, for the limit map $W_2$ shown in Figure~\ref{f:Wa}(b) Lebesgue measure is the only absolutely continuous invariant measure.

\subsection{Random $\beta$-transformations.}\label{sec5}
Let $ \beta >1$ be a non-integer and use $\lfloor \beta \rfloor$ to denote the largest integer not exceeding $\beta$. A {\em $\beta$-expansion} of a real number $x \in \big[0, \frac{\lfloor \beta \rfloor}{\beta-1} \big]$ is an expression of the form $x = \sum_{n=1}^{\infty} b_n \beta^{-n}$, where $b_n \in \{0,1, \ldots, \lfloor \beta \rfloor \}$ for all $n \ge 1$. The properties of $\beta$-expansions have been thoroughly studied. One of the more striking results is that Lebesgue almost all $x  \in \big[0, \frac{\lfloor \beta \rfloor}{\beta-1} \big]$ have uncountably many different $\beta$-expansions (see \cite{EJK,Sid03,DaVr2}). In \cite{DaKr} Dajani and Kraaikamp introduced a random system that produces for each $x \in \big[0, \frac{\lfloor \beta \rfloor}{\beta-1} \big]$ all its possible $\beta$-expansions. We will define this system for $1 < \beta < 2$ for simplicity, but everything easily extends to $\beta >2$. Set 
$$z_0=0, \quad z_1=\frac1{\beta}, \quad z_2 = \frac1{\beta(\beta-1)}, \quad z_3= \frac1{\beta-1},$$
and let
\[ T_0(x) = \begin{cases}
\beta x, & \text{if } x \in [z_0,z_2],\\
\beta x -1, & \text{if } x \in (z_2,z_3],
\end{cases}
\quad \text{and} \quad T_1(x) = \begin{cases}
\beta x, & \text{if } x \in [0,z_1),\\
\beta x -1, & \text{if } x \in [z_1,z_3],
\end{cases}\]
see Figure~\ref{f:beta}. The map $T_0$ is called the {\em lazy $\beta$-transformation} and the map $T_1$ is the {\em greedy $\beta$-transformation}. We do not bother to rescale the system to the unit interval $[0,1]$, since this has no effect on the computations.

\begin{figure}[h!]
\centering
\subfigure[$T_0$]{
\begin{tikzpicture}[scale=2.7]
\draw(0,0)node[below]{\small $0$}--(.58,0)node[below]{\small $\frac{1}{\beta(\beta-1)}$}--(1,0)node[below]{\small $\frac1{\beta-1}$}--(1,1)--(0,1)node[left]{\small $\frac1{\beta-1}$}--(0,.3)node[left]{\small $\frac{2-\beta}{\beta-1}$}--(0,0);

\draw[dotted](.58,0)--(.58,1);
\draw[dashed](0,.3)--(.58,.3);

\draw[thick, purple!70!black] (0,0)--(.58,1)(.58,.3)--(1,1);

\end{tikzpicture}}
\hspace{5mm}
\subfigure[$T_1$]{
\begin{tikzpicture}[scale=2.7]
\draw(0,0)node[below]{\small $0$}--(.42,0)node[below]{\small $\frac1{\beta}$}--(1,0)node[below]{\small $\frac1{\beta-1}$}--(1,1)--(0,1)node[left]{\small $\frac1{\beta-1}$}--(0,.7)node[left]{\small $1$}--(0,0);

\draw[dotted](.42,0)--(.42,1);
\draw[dashed](0,.7)--(.42,.7);

\draw[line width=0.3mm, green!70!black] (0,0)--(.42,.7)(.42,0)--(1,1);

\end{tikzpicture}}
\hspace{5mm}
\subfigure[$T$]{
\begin{tikzpicture}[scale=2.7]
\draw(0,0)node[below]{\small $0$}--(.42,0)node[below]{\small $z_1$}--(.58,0)node[below]{\small $z_2$}--(1,0)node[below]{\small $\frac1{\beta-1}$}--(1,1)--(0,1)node[left]{\small $\frac1{\beta-1}$}--(0,.7)node[left]{\small $1$}--(0,.3)node[left]{\small $\frac{2-\beta}{\beta-1}$}--(0,0);

\draw[dotted](.42,0)--(.42,1)(.58,0)--(.58,1);
\draw[dashed](0,.7)--(.42,.7)(0,.3)--(.58,.3);

\draw[line width=0.3mm, black] (0,0)--(.58,1)(.42,0)--(1,1);

\end{tikzpicture}}
\caption{In (a) we see the lazy $\beta$-transformation $T_0$, in (b) the greedy $\beta$-transformation $T_1$ and in (c) we see them combined. Whether or not $1 > \frac{2-\beta}{\beta-1}$ depends on the chosen value of $\beta$.}
\label{f:beta}
\end{figure}
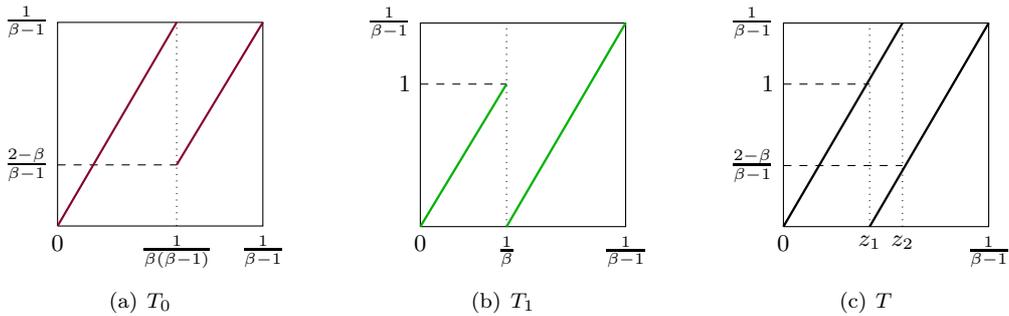

\vskip .2cm
One of the reasons why people are interested in the random $\beta$-transformation is for its relation to the infinite Bernoulli convolution, see \cite{DaVr1,DK13,Ke}. The density of the absolutely continuous invariant measures has been the subject of several papers. For a special class of values $\beta$ an explicit expression of the density of $\mu_{\mathbf p}$ was found in \cite{DaVr2} using a Markov chain. In \cite{Ke} Kempton produced an explicit formula for the invariant density for all $1 < \beta <2$ in case $p_0=p_1=\frac12$ by constructing a natural extension of the system. He states that there is a straightforward extension of this method to $\beta >2$. Recently Suzuki obtained a formula for the density of $\mu_{\mathbf p}$ for all $\beta >1$ and any $\mathbf p$ in \cite{Su}. Since the random $\beta$-transformation satisfies the assumptions (A1)-(A5) for any probability vector $\mathbf p = ( p_0 , p_1)$, we can also obtain the invariant density from Theorem~\ref{thmm}. To illustrate our method we calculate the density for $ \beta \in (1, 2)$ and $p_0=p_1=\frac12$.

\vskip .2cm
Let $\Omega = \{0,1\}$, $N=3$ and set 
$$I_1=[z_0, z_1), \quad I_2=[ z_1, z_2 ], \quad I_3=( z_2, z_3].$$ 
Define the left and right limits at each point of discontinuity:
\begin{center}
\begin{tabular}{llll}
$a_{1,0}=1$, \hspace{1cm} & $b_{1,0}=1$, \hspace{1cm}  & $a_{2,0}=\frac1{\beta-1}$, \hspace{1cm}  & $b_{2,0}=\frac{2-\beta}{\beta-1}$,\\
$a_{1,1}=1$, & $b_{1,1}=0$, & $a_{2,1}=\frac{2-\beta}{\beta-1}$, & $b_{2,1}=\frac{2-\beta}{\beta-1}$.
\end{tabular}
\end{center}
As pointed out in Remark~\ref{r:smallM}, to determine $\gamma$ it would suffice to compute only one row of $M$, but for the sake of completeness we give $M$ below. Let $\KI_n(1)=c_n$. By the symmetry of the system, for each $x \in [z_0,z_3]$ and all $(i,j) \in \{1,2,3\} \times \{0,1\}$, 
\begin{equation}\label{symmetry}
T_{i,j}(z_3-x)=z_3- T_{4-i,1-j}(x).
\end{equation}
If for any $\omega = \omega_1 \ldots \omega_t \in \{0,1\}^*$, we let $\bar \omega \in \{0,1\}^*$ denote the string $\bar \omega = (1-\omega_1)\ldots (1-\omega_t)$, then \eqref{symmetry} implies that $T_\omega(1) \in I_n$  if and only if $ T_{\bar \omega} \big(\frac{2-\beta}{\beta-1}\big) \in I_{4-n}$ and so $\KI_n\big(\frac{2-\beta}{\beta-1}\big)=c_{4-n}$.

\noindent We obtain
$$M=\left(\begin{matrix}
\frac{1}{\beta}+ \frac{1}{2 \beta} (c_1 -\frac{1}{\beta-1}) & -\frac{1}{2\beta}c_3 \\[8pt]
-\frac{1}{\beta} +\frac{1}{2 \beta}c_2 & \frac{1}{\beta}-\frac{1}{2 \beta}c_2\\[8pt]
\frac{1}{2\beta}c_3& -\frac{1}{\beta}- \frac{1}{2 \beta} (c_1 -\frac{1}{\beta-1})
\end{matrix}\right).$$
\noindent The null space consists of all vectors of the form
$$ s \begin{pmatrix} 
1 &
1
\end{pmatrix}^\intercal, \quad s \in \mathbb R.$$
From Theorem~\ref{t:alldensities} we then know that the system $T$ has a unique invariant density. We obtain
\begin{equation}\nonumber
h_{\gamma}={} \frac{c}{2 \beta} \sum_{t \geq 0} \sum_{\omega \in \{0,1\}^t} \bigg(\frac{1}{2 \beta} \bigg)^t \bigg( \mathbf{1}_{[0,T_\omega(1))}+ \mathbf{1}_{[T_\omega(\frac{2-\beta}{\beta-1}),\frac1{\beta-1}]}\bigg),
\end{equation}
for some normalising constant $c$. This matches the density found in \cite[Theorem 2.1]{Ke} except for possibly countably many points.

\vskip .2cm
If we set $p_0 \neq \frac12$, the computations are less straightforward. Nevertheless, we can obtain a nice closed formula for the density in specific instances. Let $p_0= p \in [0,1]$ be arbitrary and consider $\beta = \frac{1+\sqrt 5}{2}$, the golden mean. Then $\beta$ satisfies $\beta^2-\beta-1=0$ and the system has the nice property that $T_{2,0}(z_1) =z_2$ and $T_{2,1}(z_2) =z_1$ for $z_1= \frac1{\beta}$ and $z_2= 1$. Also note that $\frac1{\beta-1}=\beta$. This specific case has also been studied in \cite[Example 1]{DaVr2}. 
The resulting matrix $M$ is given by
$$M=\frac{\beta}{\beta^2-p(1-p)}\left(\begin{matrix}
p^2  & -p (1-p) \\[8pt]
-p  & (1-p)\\[8pt]
(1-p)p  & -(1-p)^2
\end{matrix}\right),$$
and its null space consists of all vectors of the form
$$ s \begin{pmatrix} 
1-p &
p
\end{pmatrix}^\intercal, \quad s \in \mathbb R.$$
For the functions $L_y$ we obtain $L_0=0$, $L_\beta= \beta^2$ and
\[ \begin{split}
L_{\frac1\beta} &= \frac{p^2\beta^2}{\beta^2-p(1-p)} + \frac{\beta^2}{\beta^2-p(1-p)} \mathbf 1_{[0,\frac1{\beta})}+ \frac{p\beta}{\beta^2-p(1-p)} \mathbf 1_{[0,1)},\\
L_1&= \frac{p\beta^3}{\beta^2-p(1-p)} + \frac{(1-p)\beta}{\beta^2-p(1-p)} \mathbf 1_{[0,\frac1{\beta})}+ \frac{\beta^2}{\beta^2-p(1-p)} \mathbf 1_{[0,1)}.
\end{split}\]
The unique invariant density turns out to be 
$$h_{\gamma}=  \frac{\beta^2}{1+\beta^2}\big( (1-p)\beta\cdot \mathbf{1}_{[0,\beta-1]} + \mathbf{1}_{(\beta-1, 1)} + p\beta \cdot \mathbf{1}_{[1, \beta]} \big),$$
which for $p=\frac{1}{2}$ corresponds to
$$h_{\gamma}= \frac{\beta^2}{2(1+\beta^2)}( \beta \cdot \mathbf{1}_{[0,\beta-1]} +2  \cdot \mathbf{1}_{(\beta-1, 1)} + \beta\cdot \mathbf{1}_{[1, \beta]} ).$$

\subsection{The random $(\alpha,\beta)$-transformation.}\label{s:beta2}
As an example of a system that is not everywhere expanding, but is expanding on average, we consider a random combination of the greedy $\beta$-transformation and the non-expanding $(\alpha,\beta)$-transformation introduced in \cite{DHK}. More specifically, let $0 < \alpha < 1$ and $1 < \beta < 2$ be given and 
$$z_0=0, \quad z_1= 1/\beta, \quad z_2=1.$$
Define the $(\alpha,\beta)$-transformation $T_0$ on the interval $[0,1]$ by
\begin{equation}\nonumber
T_0(x)= \left \{
\begin{aligned}
& \beta x,  &\mbox{ if } x \in [0, z_1), \cr
&\frac{\alpha}{\beta} (\beta x-1),  &\mbox{ if } x \in [z_1, z_2 ]. \end{aligned}\right.
\end{equation}
Let $T_1: [0,1] \to [0,1]$ be the greedy $\beta$-transformation again, given by $T_1(x) = \beta x \pmod 1$. For any $0<p <\frac{\alpha(\beta-1)}{\beta - \alpha}$ the random system $T$ with probability vector $\mathbf p= (p, 1-p)$ satisfies the conditions (A1), (A2), (A3) and (A5). The assumptions on the boundary points from (A4) do not hold, but this is easily solved by adding an extra interval $(z_2, z_3]$ for $z_3=\frac1{\beta-1}$ and extending $T_0$ and $T_1$ to it by setting $T_0(x) = T_1(x) = \beta x -1$. 

\vskip .2cm
This random system $T$ does not satisfy the conditions of Theorem~\ref{t:alldensities} and we can therefore not conclude directly that Theorem~\ref{thmm} produces all invariant densities for $T$. However, the set $\Omega = \{0,1\}$ is finite and the map $T_1$ is expanding with $T_1'(x)=\beta >1$ for all $x$ and therefore $T$ satisfies the conditions from \cite[Corollary 7]{Pe} on the number of ergodic components of the pseudo skew-product $R$. Since the greedy $\beta$-transformation $T_1$ has a unique absolutely continuous invariant measure, this corollary implies that also the random system $T$ has a unique invariant density. We use Theorem~\ref{thmm} to get this density. 

\vskip .2cm
Let $0 < p < \frac{\alpha(\beta-1)}{\beta - \alpha}$ be arbitrary and set 
$$I_1= [z_0, z_1), \quad I_2=[z_1,z_2], \quad I_3= (z_2, z_3].$$

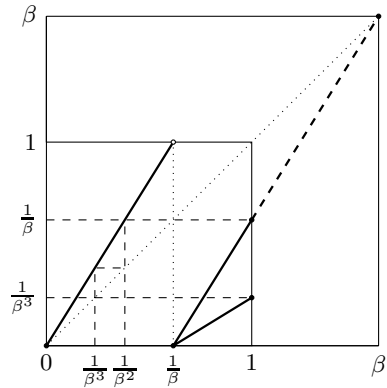
\begin{figure}[h!]
\centering
\begin{tikzpicture}[scale=2.7]
\draw(0,0)node[below]{\small $0$}--(.236,0)node[below]{\small $\frac{1}{\beta^3}$}--(.382,0)node[below]{\small $\frac{1}{\beta^2}$}--(.618,0)node[below]{\small $\frac{1}{\beta}$}--(1,0)node[below]{\small $1$}--(1,1)--(0,1)node[left]{\small $1$}--(0,1.618)node[left]{\small $\beta$}--(0,.618)node[left]{\small $\frac1{\beta}$}--(0,.236)node[left]{\small $\frac{1}{\beta^3}$}--(0,0)--(1.618,0)node[below]{\small $\beta$};

\draw[](1.618,0)--(1.618,1.618)(1,0)--(1.618,0)(0,1.618)--(1.618,1.618)(0,1)--(0,1.618);
\draw[dashed](0,.236)--(1,.236)(0,.618)--(1,.618)(.236,0)--(.236,.382)--(.382,.382)(.382,0)--(.382,.618);
\draw[dotted](.618,0)--(.618,1)(0,0)--(1.618,1.618);

\draw[line width=0.3mm, black] (0,0)--(.618,1)(.618,0)--(1,.618)(.618,0)--(1,.236);
\draw[line width=0.3mm, black, dashed] (1,.618)--(1.618,1.618);

\filldraw[draw=black, fill=black] (0,0) circle (.3pt);
\filldraw[draw=black, fill=black] (1,.236) circle (.3pt);
\filldraw[draw=black, fill=white] (.618,1) circle (.3pt);
\filldraw[draw=black, fill=black] (.618,0) circle (.3pt);
\filldraw[draw=black, fill=black] (1,.618) circle (.3pt);
\filldraw[draw=black, fill=black] (1.618,1.618) circle (.3pt);
\end{tikzpicture}
\caption{The random $(\alpha,\beta)$-transformation for $\beta = \frac{1+\sqrt 5}{2}$ and $\alpha = \frac1{\beta}$.}
\label{f:alphabeta}
\end{figure}
The left and right limits at each point of discontinuity are given by:
\vskip .1cm
\begin{center}
\begin{tabular}{llll}
$a_{1,0}=1$, \hspace{1cm} & $b_{1,0}=0$, \hspace{1cm}  & $a_{2,0}=\alpha -\frac{\alpha}{\beta}$, \hspace{1cm}  & $b_{2,0}=\beta-1$,\\
$a_{1,1}=1$, & $b_{1,1}= 0$, & $a_{2,1}=\beta-1$, & $b_{2,1}=\beta-1$.
\end{tabular}
\end{center}
\vskip .1cm
By construction, none of the points in $[0,1]$ will ever enter the interval $I_3$, therefore $\KI_3(y)=0$ for all $y\in [0,1]$. As a consequence, the last row of the $3 \times 2$ fundamental matrix $M$ is given by $\mu_{3, 1}=0$ and $\mu_{3,2} = -\frac1{\beta}$. This fact, together with the fact that we know from Lemma \ref{sol} that the null space of $M$ is non-trivial, forces the first column of $M$ to be zero, i.e., $\mu_{1,1}=\mu_{2,1}=\mu_{3,1}=0$. Hence, the null space of $M$ consists of all vectors of the form
$$ s \begin{pmatrix} 
1 &
0
\end{pmatrix}^\intercal, \quad s \in \mathbb R,$$
and the unique invariant density of the system $T$ is
\[ h_{\gamma}= \frac{c}{\beta} L_1 = \frac{c}{\beta} \sum_{t \ge 0} \sum_{\omega \in \Omega^t} \delta_\omega(1,t) {\mathbf 1}_{[0, T_\omega(1))},\]
for some normalising constant $c$. In case we choose $\beta = \frac{1+\sqrt 5}{2}$ and $\alpha = \frac1{\beta}$ as in Figure~\ref{f:alphabeta}, we can compute further to get
\[ h_{\gamma} = \frac{\beta^2}{\beta^2+1+2p} \bigg( p \beta \mathbf{1}_{[0, 1/ \beta^3]}+p \mathbf{1}_{[0, 1/ \beta^2]}+ \frac{1}{\beta} \mathbf{1}_{[0, 1/ \beta]}+\mathbf{1}_{[0, 1]} \bigg) .\]

\section{The random L\"uroth map with bounded digits}
In 1883 L\"uroth introduced in \cite{Lu} a representation of real numbers of the unit interval, as a generalisation of the decimal expansion. The standard {\em L\"uroth map} on $[0,1]$ is defined by $T_L(0)=0$ and
$$T_L(x):= n(n-1)x-(n-1), \quad \text{ if $x \in \bigg(\frac1n, \frac1{n-1}\bigg]$, $n \ge 2$}.$$
From $T_L$ we can obtain the L\"uroth expansion of any number $x \in (0,1]$ by assigning to it a sequence of positive integers $(l_n)_{n \geq 1}$, where $l_n$ is the unique integer such that $T_L^{n-1}(x) \in \big(\frac1{l_n}, \frac1{l_{n}-1}\big]$. The {\em L\"uroth expansion} of $x$ is then the expression
\begin{equation}\nonumber 
x = \sum_{n=1}^{\infty} \bigg( (l_n-1) \prod_{k=1}^n \frac{1}{l_k(l_k-1)} \bigg).
\end{equation}

The map $T_L$ was later generalised in various different ways. In \cite{KaKn1} and \cite{KaKn2}  the alternating L\"uroth map was introduced as
$$T_A(x):= 1- T_L(x).$$
This map is essentially a piecewise linear version of the Gauss map $x \mapsto \frac1x \pmod 1$, which can be used to obtain regular continued fraction expansions.
This yields for each $x \in [0,1]$ that is not a pre-image of 0 the alternating L\"uroth expansion given by
\[ x = \sum_{n=1}^{\infty} \bigg( (-1)^{n+1}a_n \prod_{k=1}^n \frac{1}{a_k(a_k-1)} \bigg),\]
where $a_n$ is the unique integer such that $T_A^{n-1}(x) \in \big(\frac1{a_n}, \frac1{a_{n}-1}\big]$. Further generalisations and ergodic properties of such maps were studied in \cite{Sal,JaVr,BaBu} for example. In \cite{BaBu} it was shown among other things that from a whole family of L\"uroth-type maps, the alternating L\"uroth map is the one with the best approximation properties.

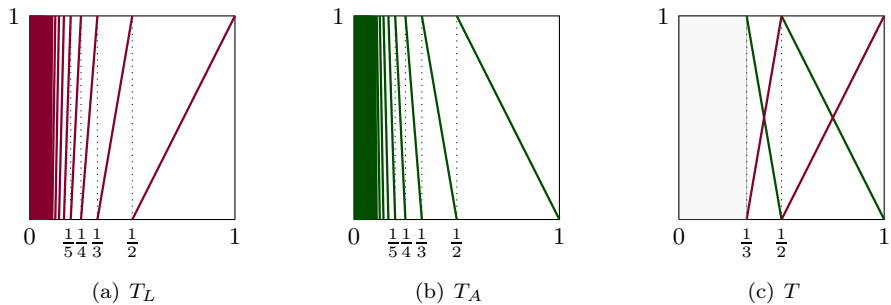
\begin{figure}[h]
 \centering
 \subfigure[$T_L$]{
 \begin{tikzpicture}[scale=2.7]
\draw(0,0)node[below]{\small $0$}--(.19,0)node[below]{\small $\frac15$}--(.255,0)node[below]{\small $\frac14$}--(.33,0)node[below]{\small $\frac13$}--(.5,0)node[below]{\small $\frac12$}--(1,0)node[below]{\small $1$}--(1,1)--(0,1)node[left]{\small $1$}--(0,0);
\draw[dotted](.2,0)--(.2,1)(.25,0)--(.25,1)(.33,0)--(.33,1)(.5,0)--(.5,1);
\draw[line width=0.3mm, purple!70!black] (.077,0)--(.083,1)(.083,0)--(.09,1)(.09,0)--(.1,1)(.1,0)--(.11,1)(.11,0)--(.125,1)(.125,0)--(.143,1)(.143,0)--(.167,1)(.167,0)--(.2,1)(.2,0)--(.25,1)(.25,0)--(.33,1)(.33,0)--(.5,1)(.5,0)--(1,1);
\draw[fill=purple!70!black, draw=purple!70!black]  (0,0) -- (.077,0) -- (.077,1) -- (0,1) -- cycle;
\end{tikzpicture}
}
\hspace{.5cm}
 \subfigure[$T_A$]{
 \begin{tikzpicture}[scale=2.7]
\draw(0,0)node[below]{\small $0$}--(.19,0)node[below]{\small $\frac15$}--(.255,0)node[below]{\small $\frac14$}--(.33,0)node[below]{\small $\frac13$}--(.5,0)node[below]{\small $\frac12$}--(1,0)node[below]{\small $1$}--(1,1)--(0,1)node[left]{\small $1$}--(0,0);
\draw[dotted](.2,0)--(.2,1)(.25,0)--(.25,1)(.33,0)--(.33,1)(.5,0)--(.5,1)(.05,0)--(.05,1);
\draw[line width=0.3mm, green!30!black]  (.077,1)--(.083,0)(.083,1)--(.09,0)(.09,1)--(.1,0)(.1,1)--(.11,0)(.11,1)--(.125,0)(.125,1)--(.143,0)(.143,1)--(.167,0)(.167,1)--(.2,0)(.2,1)--(.25,0)(.25,1)--(.33,0)(.33,1)--(.5,0)(.5,1)--(1,0);
\draw[fill=green!30!black, draw=green!30!black]  (0,0) -- (.077,0) -- (.077,1) -- (0,1) -- cycle;
\end{tikzpicture}
}
\hspace{.5cm}
 \subfigure[$T$]{
 \begin{tikzpicture}[scale=2.7]
\draw(0,0)node[below]{\small $0$}--(.33,0)node[below]{\small $\frac13$}--(.5,0)node[below]{\small $\frac12$}--(1,0)node[below]{\small $1$}--(1,1)--(0,1)node[left]{\small $1$}--(0,0);
\draw[dotted](.33,0)--(.33,1)(.5,0)--(.5,1);
\draw[line width=0.3mm, green!30!black] (.33,1)--(.5,0)(.5,1)--(1,0);
\draw[line width=0.3mm, purple!70!black] (.33,0)--(.5,1)(.5,0)--(1,1);

\draw[fill=gray!20,nearly transparent]  (0,0) -- (.33,0) -- (.33,1) -- (0,1) -- cycle;
\end{tikzpicture}

}
\caption{In (a) we see the L\"uroth map and in (b) the alternating L\"uroth map. (c) shows the open random system $T$ consisting of random combinations of $T_L$ and $T_A$ restricted to the interval $\big[\frac13,1\big]$.}
\label{f:luroth}
\end{figure}

\vskip .2cm
In this section we consider a random L\"uroth map, using $T_0:=T_L$ and $T_1:=T_A$ as its base maps. Then for each realisation of the random system $\omega \in \{0,1\}^\mathbb N$ and each $x\in [0,1]$ that is not a pre-image of 0 under the realisation $\omega$ we obtain a random L\"uroth expansion by setting for each $k \ge 0$,
\[ r_{k+1}(\omega,x) = n, \quad \text{if } T_{\omega_1^k}(x) \in \Big( \frac1n, \frac1{n-1} \Big].\]
Observe that
\[ T_{\omega_1^k}(x) = (-1)^{\omega_k} r_k(r_k-1)x + (-1)^{\omega_k-1}(r_k+ \omega_k-1).\]
If we set $s_n = \sum_{k=1}^n \omega_k$ with $s_0=0$, then we obtain the following expression for $x$:
\[ x = \sum_{n \ge 1} (-1)^{s_{n-1}}(r_n+\omega_n-1) \prod_{k=1}^n \frac1{r_k(r_k-1)}.\]
We call this expression a {\em random L\"uroth expansion} of $x$.

\vskip .2cm
Many people have considered digit properties of L\"uroth expansions, such as digit frequencies and the sizes of sets of numbers for which the digit sequence $(l_n)_{n \geq 1}$ is bounded. See for example \cite{BI09,FLMW10,SF11,MT13,GL16}. The set of points that have all L\"uroth digits bounded by some integer $a$ corresponds to the set of points that avoid the set $[0,\frac1a \big]$ under all iterations of the map $T_L$. For a deterministic system, such a set is usually a fractal no matter how large we take the upper bound $a$. In the random setting, the situation is drastically different. Fix for example $a =3$. We show below that all $x \in \big[\frac13, 1\big]$ have a random L\"uroth expansion using only digits 2 and 3. Using the density given by Theorem~\ref{thmm} we can compute the frequency of each of these digits for any typical point $x \in \big[\frac13, 1\big]$.

\vskip .2cm
Partition the interval $\big[\frac13,1 \big]$ by setting 
$$I_1=\bigg[\frac13, \frac{7}{18}\bigg], \quad I_2=\bigg(\frac{7}{18}, \frac49\bigg], \quad I_3=\bigg(\frac49, \frac12\bigg], \quad I_4=\bigg(\frac12, \frac23\bigg], \quad I_5=\bigg(\frac23, \frac56\bigg], \quad I_6=\bigg(\frac56, 1\bigg].$$ 
Let 
$$T_0(x) := \begin{cases} 
T_L(x), & \mbox{if } \displaystyle x \in I_2 \cup I_3 \cup I_5 \cup I_6, \\
\\
T_A(x), & \mbox{if } \displaystyle x \in I_1 \cup I_4,
\end{cases}
 \ \text{ and }  \
T_1(x) := \begin{cases} 
T_A(x), & \mbox{if } \displaystyle x \in I_1 \cup I_2 \cup I_4 \cup I_5, \\
\\
T_L(x), & \mbox{if } \displaystyle x \in I_3 \cup I_6.
\end{cases}$$
For $0 \leq p \leq 1$ with $p \neq \frac12$ let $p_0:=p$ and $p_1:=1-p$ and let $T$ now be the {\em random L\"uroth system with digits $2$ and $3$} defined on $\big[\frac13, 1\big]$ by setting $T(x)= T_j (x)$ with probability $p_j$, see Figure~\ref{f:randomluroth23}. Note that we have to exclude $p=\frac12$, since condition (A5) is not satisfied in that case.

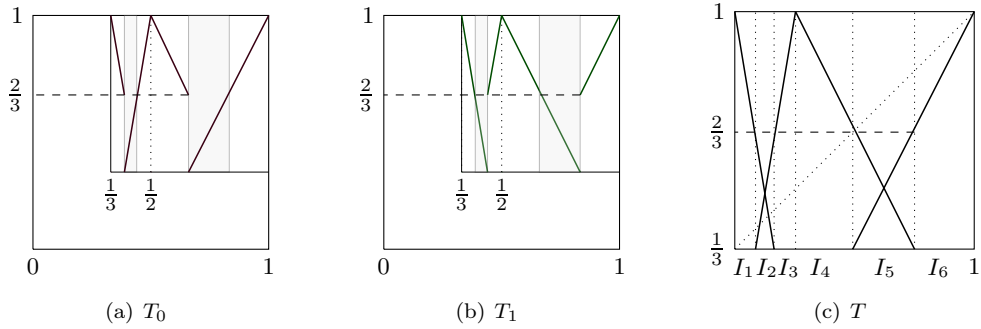
\begin{figure}[h]
\centering
\subfigure[$T_0$]{
\begin{tikzpicture}[scale=3.1]
\draw(0,0)node[below]{\small $0$}--(.33,0)node[below]{}--(.5,0)node[below]{}--(1,0)node[below]{\small $1$}--(1,1)--(0,1)node[left]{\small $1$}--(0,0);

\draw[fill=gray!20,nearly transparent]  (.66,.33) -- (.833,.33) -- (.833,1) -- (.66,1) -- cycle;
\draw[fill=gray!20,nearly transparent]  (.388,.33) -- (.44,.33) -- (.44,1) -- (.388,1) -- cycle;

\draw[dotted](.5,.33)--(.5,1);
\draw[line width=.1mm](.33,.33)--(.33,1);
\draw[line width=.1mm](.33,.33)--(1,.33);
\draw[line width=0.2mm, purple!30!black](.33,1)--(.388,.66)(.5,1)--(.66,.66);
\draw[line width=0.2mm, purple!30!black](.388,.33)--(.5,1)(.66,.33)--(1,1);

\filldraw[draw=purple!30!black, fill=purple!30!black] (.33,.33) circle (.0000001pt) node [ yshift=-0.3cm]{$\frac13$};
\filldraw[draw=purple!30!black, fill=purple!30!black] (.5,.33) circle (.0000001pt) node [ yshift=-0.3cm]{$\frac12$};

\draw[dashed] (.66,.66)--(0,.66)node[left]{$\frac23$};
\node[below] at (.355,0) {\small \color{white} $I_1$};

\end{tikzpicture}}
\hspace{5mm}
\subfigure[$T_1$]{
\begin{tikzpicture}[scale=3.1]
\draw(0,0)node[below]{\small $0$}--(.33,0)node[below]{}--(.5,0)node[below]{}--(1,0)node[below]{\small $1$}--(1,1)--(0,1)node[left]{\small $1$}--(0,0);
\draw[line width=.1mm](.33,.33)--(.33,1);
\draw[line width=.1mm](.33,.33)--(1,.33);
\draw[line width=0.2mm, green!30!black](.33,1)--(.44,.33)(.5,1)--(.833,.33);
\draw[line width=0.2mm, green!30!black](.44,.66)--(.5,1)(.833,.66)--(1,1);

\draw[fill=gray!20,nearly transparent]  (.66,.33) -- (.833,.33) -- (.833,1) -- (.66,1) -- cycle;
\draw[fill=gray!20,nearly transparent]  (.388,.33) -- (.44,.33) -- (.44,1) -- (.388,1) -- cycle;

\filldraw[draw=green!30!black, fill=green!30!black] (.33,.33) circle (.0000001pt) node [yshift=-0.3cm]{$\frac13$} ;
\filldraw[draw=green!30!black, fill=green!30!black] (.5,.33) circle (.0000001pt) node [yshift=-0.3cm]{$\frac12$} ;

\draw[dotted](.33,.33)--(.33,1)(.5,.33)--(.5,1);
\draw[dashed] (.833,.66)--(0,.66)node[left]{$\frac23$};

\node[below] at (.355,0) {\small \color{white} $I_1$};

\end{tikzpicture}}
\hspace{5mm}
\subfigure[$T$]{
\begin{tikzpicture}[scale=4.7]
\draw (.33,.33)node[left]{$\frac13$}--(.5,.33)node[below]{}--(1,.33)node[below]{\small $1$}--(1,1)--(.33,1)node[left]{\small $1$}--(.33,.33);

\draw[line width=0.2mm, black](.388,.33)--(.5,1)(.66,.33)--(1,1);
\draw[line width=0.2mm, black](.33,1)--(.44,.33)(.5,1)--(.833,.33);

\node[below] at (.355,.33) {\small $I_1$};
\node[below] at (.42,.33) {\small $I_2$};
\node[below] at (.48,.33) {\small $I_3$};
\node[below] at (.57,.33) {\small $I_4$};
\node[below] at (.75,.33) {\small $I_5$};
\node[below] at (.90,.33) {\small $I_6$};
\draw[dashed] (.833,.66)--(.33,.66)node[left]{$\frac23$};

\draw[dotted](.5,.33)--(.5,1)(.33,.33)--(1,1)(.388,.33)--(.388,1)(.44,.33)--(.44,1)(.66,.33)--(.66,1)(.833,.33)--(.833,1);

\end{tikzpicture}}
\caption{The systems $T_0$, $T_1$ and $T$ on the interval $I=[\frac13, 1]$.}
\label{f:randomluroth23}
\end{figure}

\vskip .2cm
To use Theorem~\ref{thmm}, we need to determine the orbits of all the points $a_{n,j}$ and $b_{n,j}$, which in this case are $\frac13$, $\frac23$ and $1$. One easily checks that all $\KI_n(a_{i,j})$ and $\KI_n(b_{i,j})$ are zero, except for 
$$\KI_{1}\bigg(\frac13\bigg)= -\frac{1}{6}, \qquad  \KI_{6}\bigg(\frac13\bigg)= -\frac{1}{6}, \qquad  \KI_{6}(1)=1 \quad \text{and} \quad \KI_{4}\bigg(\frac23\bigg)= -\frac13. $$
The fundamental matrix $M$ of the system is therefore given by\\
$$M=\left(\begin{matrix}
\frac{p-6}{36} & \frac{1-p}{36} & 0 & \frac{p}{12} & \frac{1-p}{12} \\[8pt]
\frac{1-2p}{6} & \frac{2p-1}{6} & 0 & 0& 0  \\[8pt]
0&-\frac{1}{6} & \frac{1}{6} & 0 & 0\\[8pt]
\frac{p}{18}& \frac{1-p}{18} & \frac{1}{2} & \frac{p-3}{6} & \frac{1-p}{6} \\[8pt]
0& 0 & 0 & \frac{1-2p}{2} & \frac{2p-1}{2}  \\[8pt]
\frac{p}{36} & \frac{1-p}{36} & \frac23 & \frac{p}{12} & -\frac{p+5}{12} 
\end{matrix}\right),$$
and its null space consists of all vectors of the form
$$s\begin{pmatrix} 
3&
3&
3&
5&
5
\end{pmatrix}^\intercal, \quad s \in \mathbb R.$$
Again this is a one-dimensional space, so by Theorem~\ref{t:alldensities} $T$ has a unique invariant density. The corresponding measure $m_{\mathbf p} \times \mu_{\mathbf p}$ is necessarily ergodic for $R$. From
$$L_{\frac13}= - \frac13, \qquad L_{\frac23}= \frac23 \cdot \mathbf{1}_{[\frac13, \frac23]} \quad \text{and} \quad L_1= 2 $$
we get the invariant density 
$$h_{\gamma} = \frac38 ( 3 \cdot \mathbf{1}_{[\frac13, \frac23]}+5 \cdot \mathbf{1}_{(\frac23, 1]}).$$

\vskip .2cm
Let $R: \{0,1\}^{\mathbb N} \times \big[ \frac13,1\big] \to \{0,1\}^{\mathbb N} \times \big[ \frac13,1\big]$ be the pseudo skew-product associated to $T$. For any point $(\omega, x) \in \{0,1\}^{\mathbb N} \times \big[ \frac13,1\big]$ the frequency of the digit 2 in its random L\"uroth expansion is given by
\[ \lim_{n \to \infty} \frac1n \sum_{k=0}^{n-1} \mathbf 1_{\{0,1\}^{\mathbb N} \times (\frac12,1]}(R^k(\omega,x)).\]
Since $m_{\mathbf p} \times \mu_{\mathbf p}$ is ergodic, by the Ergodic Theorem we have that for $m_{\mathbf p} \times \mu_{\mathbf p}$-a.e.~$(\omega,x) \in \{0,1\}^{\mathbb N} \times \big[ \frac13,1\big]$ the frequency of 2 in the associated random L\"uroth expansion is 
\[ \int_{(\frac12,1]} h_\gamma d\lambda  = \frac{13}{16},\]
giving also that the frequency of the digit 3 is $\frac3{16}$.

\vskip .2cm
Even though condition (A5) is not satisfied for $p=\frac12$, the fundamental matrix $M$ can still be computed and its null space is still given by $ s\begin{pmatrix} 
3&
3&
3&
5&
5
\end{pmatrix}^\intercal$, $s \in \mathbb R$. Moreover, the function $h_{\gamma} = \frac38 ( 3 \cdot \mathbf{1}_{[\frac13, \frac23]}+5 \cdot \mathbf{1}_{(\frac23, 1]})$ is still the unique invariant density. We believe that Theorem~\ref{thmm} and Theorem~\ref{t:alldensities} should still hold without the assumption (A5).

\vskip .2cm
\begin{nrem}
Note that our method is also capable of handling more general versions of restricted random L\"uroth maps. If, instead of considering holes of the form $\big[0, \frac13 \big)$, we would restrict the system $\{T_L, T_A\}$ to an interval $[\eta,1]$ for some $0 < \eta < 1$, then by the same arguments as above, the restricted random L\"uroth system has a unique absolutely continuous invariant measure for which the density can be obtained from Theorem~\ref{thmm}.
\end{nrem}

\bibliographystyle{alpha}
\bibliography{random}

\end{document}